\numberwithin{equation}{section}
\newtheorem{theorem}{Theorem}[section]
\newtheorem{corollary}[theorem]{Corollary}
\newtheorem{lemma}[theorem]{Lemma}
\newtheorem{proposition}[theorem]{Proposition}
\newtheorem{example}[theorem]{Example}
\theoremstyle{definition}
\newtheorem{definition}[theorem]{Definition}
\newcommand{\Hom}{\operatorname{Hom}}
\newcommand{\A}{\mathbf{A}}
\newcommand{\B}{\mathbf{B}}
\newcommand{\Q}{\mathbf{Q}}
\newcommand{\Z}{\mathbf{Z}}
\newcommand{\h}{\mathfrak{h}}
\newcommand{\g}{\mathfrak{g}}
\title[Perfect basis theory for quantum Borcherds-Bozec algebras]
{Perfect basis theory for quantum \\ Borcherds-Bozec algebras}
\author[Zhaobing Fan]{Zhaobing Fan}
\address{Harbin Engineering University,
Harbin, China}
\email{fanzhaobing@hrbeu.edu.cn}
\thanks{}
\author[Shaolong Han]{Shaolong Han${}^*$}
\address{Beijing International Center for Mathematical Research, Peking University, Beijing 100871, China}
\email{algebra@hrbeu.edu.cn}
\author[Seok-Jin Kang]{Seok-Jin Kang}
\address{Korea Research Institute of Arts and Mathematics,
Asan-si, Chungcheongnam-do, 31551, Korea}
\email{soccerkang@hotmail.com}
\thanks{}
\author[Young Rock Kim]{Young Rock Kim}
\address{Graduate School of Education, Hankuk University of Foreign Studies, Seoul, 02450,  Korea}
\email{rocky777@hufs.ac.kr} %
\thanks{${}^{*}$ Corresponding author. All authors contribute equally.}
\begin{document}

\keywords{quantum Borcherds-Bozec algebra, crystal basis, global basis, perfect basis}

\subjclass[2010] {17B37, 17B67, 16G20}

\begin{abstract}
In this paper, we develop the perfect basis theory for quantum Borcherds-Bozec algebras $U_{q}(\g)$ and their irreducible highest weight modules $V(\lambda)$. We show that the lower perfect graph (resp. upper perfect graph) of every lower perfect basis (resp. upper perfect basis) of $U_{q}^{-}(\g)$ (resp. $V(\lambda)$) is isomorphic to the crystal $B(\infty)$ (resp. $B(\lambda)$).

\end{abstract}

\maketitle

\tableofcontents
\section{Introduction}
Let $G$ be a simple simply-connected
complex algebraic group and $L(\lambda)$ be the irreducible representation of $G$ with 
dominant integral weight $\lambda$.
The research of perfect bases can be traced back to the work of Gelfand and Zelevinsky in \cite{GZ85,GZ86,GZ96}, where they proposed good bases for $L(\lambda)$ which are useful for describing the tensor product multiplicities combinatorially.
To study the geometric  crystals of the Borel subgroup $B$ of $G$, Berenstein and Kazhdan introduced the notion of perfect bases which can be considered as the enhanced version of the good bases {\color{red} \cite{BeK}}.
They took a geometric approach to the combinatorial objects and constructed the perfect bases for integrable highest weight modules over Kac-Moody algebras.
Thus it gives rise to a natural crystal structure without taking the quantum deformation and
crystal limit.

\vskip 2mm

In \cite{KOP11} and \cite{KKKS}, Berenstein-Kazhdan theory was extended to the
perfect basis and dual perfect basis theory for quantum Borcherds algebras and
their irreducible highest weight modules.
As is the case with Berenstein-Kazhdan theory, the crystals arising from perfect
bases and dual perfect bases are isomorphic to either $B(\infty)$,
the crystal of  the negative half  of the quantum Borcherds algebra,
or to $B(\lambda)$, the crystal of the irreducible highest weight module, respectively.
Moreover, they showed that the perfect basis theory plays an important role in the
categorification of quantum Borcherds algebras and their highest weight modules {\color{red} \cite{KOP12}}.

\vskip 2mm

As we have seen in the past 25 years, one of the most significant achievements in representation theory of quantum groups have appeared: Lusztig's canonical basis theory \cite{Lus90, Lus91} and Kashiwara's crystal
basis theory  \cite{Kashi90, Kashi91}.

\vskip 2mm

The canonical basis theory is closely related to the theory of perverse sheaves on
the representation varieties of quivers without loops.
In  \cite{Bozec2014b, Bozec2014c}, Bozec extended Lusztig's theory to the
study of perverse sheaves and quivers with loops, thereby introduced the notion
of quantum Borcherds-Bozec algebras.
The quantum Borcherds-Bozec algebras can be viewed as a huge generalization of
quantum groups and quantum Borcherds algebras \cite{Dr85, Jimbo85, Kang95}.
The theory of highest weight module, Ringel-Hall algebra construction, canonical bases, crystal bases for
quantum Borcherds-Bozec algebras have been
developed and investigated in  \cite{KK20, Kang18, Lu21, FHKK}.
We expect there are much more to be explored in the theory of
quantum Borcheds-Bozec algebras from various points of view.

\vskip 2mm

In this paper, we develop the theory of perfect bases for quantum Borcherds-Bozec algebras
and their integrable highest weight modules.
Let $U_{q}^{-}(\g)$ be the negative part of the quantum Borcherds-Bozec algebra $U_{q}(\g)$
and let $V(\lambda)$ be the irreducible highest weight module with $\lambda \in P^{+}$.
We first define the notion of {\it lower perfect bases}  and  {\it lower perfect graphs}
for weighted  vector spaces associated to a
Borcherds-Cartan datum.
Then we show that the lower global basis $\mathbf{B}(\infty)$ (resp. $\mathbf{B}(\lambda)$)
of $U_{q}^{-}(\g)$ (resp. $V(\lambda)$)
is a lower perfect basis  of $U_{q}^{-}(\g)$ (resp. $V(\lambda)$), which gives a lower perfect graph structure on $U_{q}^{-}(\g)$ (resp. $V(\lambda)$). 

\vskip 2mm

One of the heart of perfect basis theory is that the perfect graphs have natural crystal structure. For this purpose, we prove the uniqueness of lower perfect graphs in the lower perfect bases. This fact implies that the lower perfect graphs of $U_{q}^{-}(\g)$ (resp. $V(\lambda)$) are all isomorphic to the crystal $B(\infty)$ (resp. $B(\lambda)$).

\vskip 2mm

As the dual version of lower perfect basis, we introduce the notion of  {\it upper perfect basis} and  {\it upper perfect graph} for a weighted space associated with Borcherds-Cartan datum. We show that the lower perfect basis and upper perfect basis can be determined by each other. Since we consider the restricted dual space of $U^-_q(\g)$ and $V(\lambda)$, by the Kashiwara's bilinear form, the restricted dual spaces of $U^-_q(\g)$ and $V(\lambda)$ can be identified with  the original spaces. Thus the upper perfect bases of $U^-_q(\g)$ and $V(\lambda)$ share some of similar
properties with  lower perfect bases.

\vskip 2mm

The theory of perfect bases has many applications to representation theory.
In \cite{KKO13, KKO14}, Kang, Kashiwara and Oh used the strong perfect bases theory to characterize the $\mathbb A$-forms of the highest weight module $V(\Lambda)$ of quantum Kac-Moody algebra and obtained a supercategorification of $V(\Lambda)$.
The perfect basis theory developed here will take a significant part in our on-going work on
categorification of quantum Borcherds-Bozec algebras and their highest weight modules.


\vskip 2mm

This paper is organized as follows. 
In Section \ref{sec:qBB}, we recall the definition of quantum Borcherds-Bozec algebra $U_q(\g)$ and its representation theory.
In Sections \ref{sec:lcb}, we review the crystal basis theory for quantum Borcherds-Bozec algebra and its highest weight modules.
In Section \ref{sec:lgb}, we give the construction of lower global bases $\mathbf B(\infty)$ and $\mathbf B(\lambda)$  for $U^-_q(\g)$ and $V(\lambda)$, respectively.
In Section \ref{sec:lpb}, we introduce the lower perfect basis for a weighted space and show that the lower global bases $\mathbf B(\infty)$ and $\mathbf B(\lambda)$ are lower perfect bases and their lower perfect graphs are isomorphic to the crystals $B(\infty)$ and $B(\lambda)$, respectively. In Section \ref{sec:unique}, we prove the uniqueness of lower perfect graphs of lower perfect bases.  In Section \ref{sec:upper}, we introduce the upper perfect basis and show that the  dual basis of an upper (resp. lower) perfect basis is a lower (resp. upper) perfect basis. Therefore, the upper global basis $\mathbf B(\infty)^\vee$ (resp. $\mathbf B(\lambda)^\vee$)  of $U^-_q(\g)$ (resp. $V(\lambda)$) is an upper perfect basis and its upper perfect  graph is isomorphic to $B(\infty)$ (resp. $B(\lambda)$).

\vskip 3mm 


\noindent
{\it Acknowledgements}.

\vskip 2mm

Z. Fan was partially supported by the NSF of China grant 12271120 and the Fundamental Research Funds for the central universities. S.-J. Kang was supported by China grant YZ2260010601. Y. R. Kim was supported by the National Research Foundation of Korea (NRF) grant funded by the Korea government (MSIT) (No. 2021R1A2C1011467) and was supported by Hankuk University of Foreign Studies Research Fund.

\section{Quantum Borcherds-Bozec algebras}
\label{sec:qBB}

Let $I$ be an index set possibly countably infinite. An
integer-valued matrix $A=(a_{ij})_{i,j \in I}$ is called an {\it
	even symmetrizable Borcherds-Cartan matrix} if it satisfies the
following conditions:

\begin{itemize}
	\vskip 2mm
	\item[(i)] $a_{ii}=2, 0, -2, -4, ...$,
	\vskip 2mm	
	\item[(ii)] $a_{ij}\le 0$ for $i \neq j$,
	\vskip 2mm	
	\item[(iii)] $a_{ij}=0$ if and only if $a_{ji}=0$,
	\vskip 2mm	
	\item[(iv)] there exists a diagonal matrix $D=\text{diag} (s_{i} \in
	\Z_{>0} \mid i \in I)$ such that $DA$ is symmetric.
\end{itemize}

\vskip 2mm

Set $I^{\text{re}}=\{i \in I \mid a_{ii}=2 \}$,
$I^{\text{im}}=\{i \in I \mid a_{ii} \le 0\}$ and
$I^{\text{iso}}=\{i \in I \mid a_{ii}=0 \}$.

\vskip 3mm

A {\it Borcherds-Cartan datum} consists of :
	\vskip 2mm
\ \ (a) an even symmetrizable Borcherds-Cartan matrix
$A=(a_{ij})_{i,j \in I}$,
	\vskip 2mm
\ \ (b) a free abelian group $P$, the {\it weight lattice},
	\vskip 2mm
\ \ (c) $\Pi=\{\alpha_{i} \in P  \mid i \in I \}$, the set of {\it
	simple roots},
	\vskip 2mm

\ \ (d) $P^{\vee} := \Hom(P, \Z)$, the {\it dual weight lattice},
	\vskip 2mm
\ \ (e) $\Pi^{\vee}=\{h_i \in P^{\vee} \mid i \in I \}$, the set of
{\it simple coroots}

\vskip 2mm

\noindent satisfying the following conditions

\vskip 2mm

\begin{itemize}
	\vskip 2mm	
	\item[(i)] $\langle h_i, \alpha_j \rangle = a_{ij}$ for all $i,
	j \in I$,
	\vskip 2mm	
	\item[(ii)] $\Pi$ is linearly independent,
	\vskip 2mm	
	\item[(iii)] for each $i \in I$, there exists an
	element $\Lambda_{i} \in P$ such that $$\langle h_i, \Lambda_j
	\rangle = \delta_{ij} \ \ \text{for all} \ i, j \in I.$$
\end{itemize}

\vskip 1mm

For a given Borcherds-Cartan matrix, there always exists such
a Borcherds-Cartan datum. The elements $\Lambda_i$'s $(i \in
I)$ are called the {\it fundamental weights}.

\vskip 3mm

We denote by
$$P^{+}:=\{\lambda \in P \mid \langle h_i, \lambda \rangle \ge 0 \
\text{for all} \ i \in I \}$$ the set of {\it dominant integral
	weights}. The free abelian group $R:= \bigoplus_{i \in I}
\Z \alpha_i$ is called the {\it root lattice}. Let $R_{+}
= \sum_{i\in I} \Z_{\ge 0} \alpha_i$ be the positive cone of the root lattice. For $\beta = \sum k_i \alpha_i
\in R_{+}$, we define its {\it height} to be
$|\beta|:=\sum k_i$.

\vskip 3mm

Set $\mathfrak{h} = \mathbf C \otimes P^{\vee}$. For $\lambda, \mu \in
{\mathfrak h}^{*}$, we define a partial ordering by $\mu \leq
\lambda$ if and only if $\lambda - \mu \in R_{+}$. Since $A$
is symmetrizable, there exists a non-degenerate symmetric bilinear
form $( \ , \ )$ on ${\mathfrak h}^{*}$ satisfying
$$(\alpha_{i}, \lambda) = s_{i} \langle h_{i}, \lambda \rangle \quad
\text{for all} \ \ \lambda \in {\mathfrak h}^{*}.$$

Let $q$ be an indeterminate.
For $i\in I$ and $n \in \mathbf Z_{> 0}$, let $x_i$ be a symbol, we define
$$q_{i}  = q^{s_i}, 
\quad [n]_{i} = \dfrac{q_{i}^{n} - q_{i}^{-n}} {q_{i} - q_{i}^{-1}}, \quad [n]_{i} ! = [n]_{i} [n-1]_{i} \cdots [1]_{i},\quad x^{(n)}_i=\frac{x^n_i}{[n]_{i} !}.$$

\vskip 3mm

\begin{definition}
	The {\it quantum Borcherds-Bozec algebra} $U_{q}(\g)$ is the
	associative algebra over $\Q(q)$ with $\mathbf{1}$ generated by
	$q^{h}$ $(h \in P^{\vee})$, ${\mathtt a}_{il}$, ${\mathtt b}_{il}$ $((i,l) \in I^{\infty})$
	with the defining relations
	\begin{equation} \label{eq:primitive}
		\begin{aligned}
			& q^0=\mathbf 1,\quad q^hq^{h'}=q^{h+h'} \ \ \text{for} \ h,h' \in P^{\vee}, \\
			& q^h \mathtt a_{jl}q^{-h} = q^{l \langle h, \alpha_j \rangle} \mathtt a_{jl},
			\ \ q^h \mathtt b_{jl}q^{-h} = q^{-l \langle h, \alpha_j \rangle } \mathtt b_{jl}
			\ \ \text{for} \ h \in P^{\vee}\ \text{and}\ (j,l)\in I^{\infty}, \\
			& \sum_{r + s = 1 - l a_{ij}} (-1)^r
			{\mathtt a}_i^{(r)}\mathtt a_{jl}\mathtt a_i^{(s)}=0
			\ \ \text{for} \ i\in 	I^{\text{re}},\ (j,l)\in I^{\infty} \ \text {and} \ i \neq (j,l), \\
			& \sum_{r + s = 1 - l a_{ij}} (-1)^r
			{\mathtt b}_i^{(r)}\mathtt b_{jl}\mathtt b_i^{(s)}=0
			\ \ \text{for} \ i\in 	I^{\text{re}},\ (j,l)\in I^{\infty} \ \text {and} \ i \neq (j,l), \\
			& \mathtt a_{il}\mathtt b_{jk} - \mathtt b_{jk}\mathtt a_{il}=\delta_{ij}\delta_{kl}\tau_{il}(K_{i}^{l} - K_{i}^{-l}), \\
			& \mathtt a_{il}\mathtt a_{jk}-\mathtt a_{jk}\mathtt a_{il} = \mathtt b_{il}\mathtt b_{jk}-\mathtt b_{jk}\mathtt b_{il} =0 \ \ \text{for} \ a_{ij}=0,
		\end{aligned}
	\end{equation}
	where $\tau_{il} = (1 - q_{i}^{2l})^{-1}  \text{ for } (i, l) \in I^{\infty} \text{ and } K_i= q_{i}^{h_{i}} \text{ for } (i \in I)$.
\end{definition}

\vskip 3mm

Let $U^+_q(\g)$ (resp. $U^-_q(\g)$) be the subalgebra of $U_q(\g)$ generated by $\mathtt a_{il}$
(resp. $\mathtt b_{il}$) for $(i,l)\in I^{\infty}$
and let $U^{0}_q(\g)$ be the subalgebra of $U_q(\g)$ generated by $q^h$ $(h\in P^{\vee})$.
Then we have the {\it triangular decomposition} of $U_q(\g)$ \cite{KK20}

$$U_{q}(\g) \cong U_{q}^-(\g) \otimes U_{q}^0(\g) \otimes U_{q}^+(\g).$$

\vskip 2mm

As vector space, the space $U^-_q(\g)$ has the following decomposition
\begin{equation}\label{eq:decomposition U}
U^-_q(\g)=\bigoplus_{\alpha\in P} U^-_q(\g)_{-\alpha},
\end{equation}
where
\begin{equation}\label{eq:weight space U}
U^-_q(\g)_{-\alpha} = \{u \in U^-_q(\g) \mid q^h u q^{-h} = q^{-\langle h, \alpha\rangle} u\ \text{for all}\ h\in P^\vee\}.
\end{equation}

\vskip 2mm

Let $^{-}:U_q(\g)\rightarrow U_q(\g)$ be the $\Q$-linear involution given by
\begin{equation}\label{eq:bar}
	\overline{\mathtt a_{il}}=\mathtt a_{il},\quad \overline{\mathtt b_{il}}=\mathtt b_{il},\quad \overline{K_i}=K_i^{-1},\quad \overline{q}=q^{-1}
\end{equation} 	
for $(i,l)\in I^{\infty}$ and $i\in I$.

\vskip 2mm 

A $U_{q}(\g)$-module $V$ is called a {\it highest weight module with highest weight $\lambda$} if there is a non-zero vector $v_{\lambda}$ in $V$ such that
\begin{enumerate}
	\vskip 2mm
	\item[(i)] $q^{h} \,  v_{\lambda} = q^{\langle h, \lambda \rangle} v_{\lambda}$ for all $h \in P^{\vee}$,
	\vskip 2mm	
	\item[(ii)] $e_{il} \, v_{\lambda} = 0$ for all $(i,l) \in I^{\infty}$,
	\vskip 2mm	
	\item[(iii)] $V=U_{q}(\g) v_{\lambda}$.
\end{enumerate}

\vskip 2mm

The vector $v_{\lambda}$ is called a {\it highest weight vector} with highest weight $\lambda$.
Let $V_{\lambda} = \Q(q) v_{\lambda}$. Then $V$ has a weight space decomposition
$V = \bigoplus_{\mu \le \lambda} V_{\mu}$. For each $\lambda \in P$, there exists a unique irreducible highest weight module,
which is denoted by $V(\lambda)$.

\vskip 2mm

Let $M$ be a $U_{q}(\g)$-module. We say that $M$ has
a {\it weight space decomposition} if
$$M = \bigoplus_{\mu \in P} M_{\mu}, \ \ \text{where}
\ M_{\mu} = \{ m \in M \mid q^{h} \,  m = q^{\langle h, \mu \rangle} m \
\text{for all} \ h \in P^{\vee} \}.$$
\noindent
We denote  $\text{wt}(M):=\{\mu \in
\h^* \mid M_{\mu} \neq 0 \}$.

\vskip 2mm

\begin{definition} \label{def:Oint}
	
	The {\it category} $\mathcal O_{\text{int}}$ consists of $U_{q}(\g)$-modules $M$ such that
	
	\begin{enumerate}
		
		\item[(a)] $M$ has a weight space decomposition $M = \oplus_{\mu \in P} M_{\mu}$
		with $\dim M_{\mu} < \infty$ for all $\mu \in \text{wt}(M)$,
		
		\vskip 2mm
		
		\item[(b)] there exist finitely many wrights $\lambda_{1}, \ldots, \lambda_{s} \in P$ such that
		$$\text{wt}(M) \subset \bigcup_{j=1}^s (\lambda_j - R_{+}),$$
		
		\vskip 2mm
		
		\item[(c)] if $i \in I^{\text{re}}$, ${\mathtt b}_{i}$ is locally nilpotent
		on $M$,
		
		\vskip 2mm
		
		\item[(d)] if $i \in I^{\text{im}}$, we have $\langle h_i, \mu
		\rangle \ge 0$ for all $\mu \in \text{wt}(M)$,
		
		\vskip 2mm
		
		\item[(e)] if $i \in I^{\text{im}}$ and $\langle h_{i}, \mu \rangle =0$,
		then ${\mathtt b}_{il}(M_{\mu})=0$,
		
		\vskip 2mm
		
		\item[(f)] if $i \in I^{\text{im}}$ and $\langle h_i, \mu \rangle
		\le - l a_{ii}$, then ${\mathtt a}_{il}(M_{\mu})=0$.
		
	\end{enumerate}
\end{definition}

\begin{proposition}\cite{KK20} \hfill
	
	{\rm
		\begin{itemize}
			
			\vskip 2mm
			
			\item[(a)] The category ${\mathcal O}_{\text{int}}$ is semisimple.

			\vskip 2mm
			
			\item[(b)] 	If  $\lambda \in P^{+}$, then $V(\lambda)$ is a simple object in $\mathcal O_{\rm int}$.
			
			\vskip 2mm
			
			\item[(c)] Every simple object in the category ${\mathcal O}_{\text{int}}$ has the form
			$V(\lambda)$ for some $\lambda \in P^{+}$.
			
		\end{itemize}
		
	}
	
\end{proposition}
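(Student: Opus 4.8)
The plan is to establish the three parts in the order (b), (c), (a), since the complete reducibility of $\mathcal O_{\mathrm{int}}$ rests on a firm grip on the modules $V(\lambda)$. For (b): as $V(\lambda)$ is, by construction, the unique irreducible highest weight module of highest weight $\lambda$, it is enough to show $V(\lambda)\in\mathcal O_{\mathrm{int}}$ when $\lambda\in P^+$. I would realize $V(\lambda)$ as the quotient of the Verma module $M(\lambda)$ by its maximal proper submodule; the triangular decomposition identifies $M(\lambda)$ with $U_q^-(\g)$ as a weighted space, so conditions (a) and (b) of Definition~\ref{def:Oint} reduce to the finite-dimensionality of each $U_q^-(\g)_{-\beta}$, which follows from the monomial (PBW / Ringel--Hall) spanning sets for $U_q^-(\g)$. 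The integrability conditions are then verified by reduction to rank one: for $i\in I^{\mathrm{re}}$ the subalgebra generated by $\mathtt a_i,\mathtt b_i,K_i^{\pm 1}$ is a copy of $U_{q_i}(\mathfrak{sl}_2)$ (via the real quantum Serre relations), and since $\mathtt a_i v_\lambda=0$ with $\langle h_i,\lambda\rangle\ge 0$, the vector $v_\lambda$ generates a finite-dimensional module over it; as the set of vectors generating a finite-dimensional $U_{q_i}(\mathfrak{sl}_2)$-submodule is $U_q(\g)$-stable, $\mathtt b_i$ is locally nilpotent on all of $V(\lambda)$, which is condition (c). For $i\in I^{\mathrm{im}}$, conditions (d)--(f) come from weight bookkeeping together with $\mathtt a_{il}\mathtt b_{jk}-\mathtt b_{jk}\mathtt a_{il}=\delta_{ij}\delta_{kl}\tau_{il}(K_i^l-K_i^{-l})$: a nonzero vector in a weight space violating one of these would, after applying suitable raising operators $\mathtt a_{jk}$, produce either a primitive vector of weight strictly below $\lambda$ (impossible in the irreducible $V(\lambda)$) or an inconsistency with the $K_i^l-K_i^{-l}$ term evaluated on a weight annihilated by $h_i$.

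For (c): let $S$ be a simple object of $\mathcal O_{\mathrm{int}}$. Condition (b) of Definition~\ref{def:Oint} prevents any infinite strictly ascending chain in $\mathrm{wt}(S)\subseteq\bigcup_j(\lambda_j-R_+)$, so $\mathrm{wt}(S)$ has a maximal element $\lambda$. Every nonzero $v\in S_\lambda$ is annihilated by all $\mathtt a_{il}$, so $U_q(\g)v=U_q^-(\g)v$ is a nonzero --- hence, by simplicity, all of --- highest weight submodule of highest weight $\lambda$; being irreducible it is $\cong V(\lambda)$. That $\lambda\in P^+$ follows because for $i\in I^{\mathrm{re}}$, local nilpotency of $\mathtt b_i$ on the $U_{q_i}(\mathfrak{sl}_2)$-highest weight vector $v$ forces $\langle h_i,\lambda\rangle\ge 0$, while for $i\in I^{\mathrm{im}}$ this is condition (d).

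For (a): I would carry the classical Casimir argument over to the quantum setting. One first produces a quantum Casimir operator $\Omega$ acting on each $M\in\mathcal O_{\mathrm{int}}$ --- a priori an element of a completion of $U_q(\g)$, but an honest operator on $M$ thanks to the local finiteness built into the category; being central, $\Omega$ preserves the finite-dimensional weight spaces, so it is locally finite and yields a decomposition $M=\bigoplus_c M[c]$ into generalized $\Omega$-eigenspaces, each again in $\mathcal O_{\mathrm{int}}$. On any highest weight module of highest weight $\mu$, $\Omega$ acts by a scalar $c_\mu$ built from $(\mu+2\rho,\mu)$, where $\rho\in P$ is fixed by $\langle h_i,\rho\rangle=\tfrac12 a_{ii}$ (available since each $a_{ii}$ is even and $\langle h_i,\Lambda_j\rangle=\delta_{ij}$). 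One then proves by induction along $R_+$ that every $M[c]$ is a direct sum of the modules $V(\mu)$ with $c_\mu=c$: a maximal weight vector of $M[c]$ generates a highest weight submodule with irreducible quotient some $V(\mu)$, and one splits off a copy of $V(\mu)$ as a direct summand using the nondegeneracy of the contravariant (Kashiwara) bilinear form --- whose radical is a submodule --- iterating this via a Zorn-type exhaustion of the finitely generated weight support.

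The hard part is (a). In contrast with the Kac--Moody case, the Casimir scalar $c_\mu$ here need not separate $\mu$ from lower dominant weights, since imaginary simple roots can make $(\beta,2\rho)\le 0$ for $\beta\in R_+$; so the $\Omega$-eigenspace decomposition does not by itself close the argument, and one must call on the vanishing conditions (d)--(f) --- which are exactly what controls the imaginary and isotropic directions --- to force the surviving extensions to split, together with nondegeneracy of the bilinear form on each $V(\mu)$ and a careful induction. Checking that the form and these conditions interact correctly in the presence of the infinitely many generators $\mathtt a_{il},\mathtt b_{il}$ attached to imaginary $i$ is the technical core; an alternative is to descend complete reducibility from the classical Borcherds--Bozec case by a specialization/flatness argument, or to bootstrap it from the crystal- and canonical-basis machinery of \cite{FHKK}.
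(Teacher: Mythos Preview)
The paper does not prove this proposition at all: it is stated with the citation \cite{KK20} and no proof is given, as the result is imported wholesale from that reference. So there is nothing in the present paper to compare your argument against.

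Your sketch is a plausible outline of how such a result is established in the Borcherds--Bozec setting, and is likely close in spirit to what \cite{KK20} does; but since the comparison you were asked to make is with \emph{this} paper's proof, the honest answer is that the paper offers none, and your proposal goes well beyond what the paper itself contains.
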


\vskip 2mm

There exists a unique non-degenerate symmetric
bilinear form $(\ , \ )_{K}$ on $V(\lambda)$ given by
\begin{equation} \label{eq:bilinearV}
	\begin{aligned}
		& (v_{\lambda}, v_{\lambda})_{K} = 1, \ \
		(q^{h} u, v)_{K} = (u , q^{h} v)_{K},\\
		& (\mathtt{b}_{il} u, v)_{K} = - (u, K_{i}^{l} \mathtt{a}_{il}v)_{K}, \\
		& (\mathtt{a}_{il} u, v)_{K} = - (u, K_{i}^{-l} \mathtt{b}_{il} v)_{K},
	\end{aligned}
\end{equation}
where $u,v\in V(\lambda)$ and $h \in P^{\vee}$.

\section{Lower crystal bases}
\label{sec:lcb}

Let  $\mathbf{c} = (c_1, \ldots, c_r) \in \Z_{\ge 0}^r$ be a
sequence of non-negative integers.
We define $|\mathbf{c}|:= c_1 + \cdots + c_r$.
If $|\mathbf{c}| = l\geq 0$, then $\mathbf{c}$ is a composition of $l$, denoted  by
$\mathbf{c} \vdash l$. If $|\mathbf{c}| = l\geq 0$ and $c_1 \ge c_2 \ge \ldots \ge c_r$, then $\mathbf{c}$ is a partition of $l$.
For each $i\in I^{\text{im}} \setminus I^{\text{iso}}$ (resp. $i \in I^{\text{iso}}$),
we denote by $\mathcal{C}_{i, l}$ the set of compositions (resp. partitions) of $l$
and set $\mathcal{C}_{i} = \bigsqcup_{l \ge 0} \mathcal{C}_{i,l}$.
For $i \in I^{\text{re}}$, we define $\mathcal{C}_{i,l} = \{ l \}$.

\vskip 3mm

For $\mathbf{c} = (c_1, \ldots, c_r)$, we define
$${\mathtt a}_{i, \mathbf{c}} = {\mathtt a}_{i c_1} {\mathtt a}_{i c_2} \cdots {\mathtt a}_{i c_r},
\quad {\mathtt b}_{i, \mathbf{c}} = {\mathtt b}_{i c_1}{\mathtt b}_{i c_2} \cdots {\mathtt b}_{i c_r}.$$

\noindent
Note that the set $\{ {\mathtt a}_{i, \mathbf{c}} \mid \mathbf{c} \vdash l \}$
(resp. $\{ {\mathtt b}_{i, \mathbf{c}} \mid \mathbf{c} \vdash l \}$)
is a basis of $U_{q}(\g)_{l \alpha_i}$ (resp. $U_{q}(\g)_{- l \alpha_i}$).

\vskip 5mm

\subsection{Crystal bases for $V(\lambda)$} \label{sub:Vlambda} \hfill

\vskip 2mm

Let $M = \oplus_{\mu \in P} M_{\mu}$ be a $U_{q}(\g)$-module in the category ${\mathcal O}_{\text{int}}$
and let  $u \in M_{\mu}$ for $\mu \in \text{wt}(M)$.

\vskip 3mm

For $i \in I^{\text{re}}$, by \cite{Kashi91}, the vector $u$ can be written uniquely as
\begin{equation} \label{eq:real string}
	u = \sum_{k \ge 0} {\mathtt b}_{i}^{(k)} u_k
\end{equation}
such that

\vskip  2mm

\begin{enumerate}
	
	\item[(i)] ${\mathtt a}_{i} u_k = 0$ for all $k \ge 0$,
	
	\vskip 2mm
	
	\item[(ii)] $u_k \in M_{\mu + k \alpha_{i}}$,
	
	\vskip 2mm
	
	\item[(iii)] $u_{k} = 0$ if $\langle h_{i}, \mu + k \alpha_{i} \rangle = 0 $.
	
\end{enumerate}

\vskip 3mm

For $i \in I^{\text{im}}$, by \cite{Bozec2014b, Bozec2014c}, the vector
$u$ can be written uniquely as
\begin{equation} \label{eq:imaginary string}
	u = \sum_{\mathbf c\in\mathcal C_i} {\mathtt b}_{i, \mathbf{c}} u_{\mathbf{c}}
\end{equation}
such that

\vskip 2mm

\begin{enumerate}
	
	\item[(i)] ${\mathtt a}_{ik} u_{\mathbf{c}} = 0$ for all $k >0$,
	
	\vskip 2mm
	
	\item[(ii)] $u_{\mathbf{c}} \in M_{\mu+ |\mathbf{c}| \alpha_{i}}$,
	
	\vskip 2mm
	
	\item[(iii)] $u_{\mathbf{c}} = 0$ if $\langle h_{i}, \mu + |\mathbf{c}| \alpha_{i} \rangle = 0$.
	
\end{enumerate}

\vskip 2mm

The expressions \eqref{eq:real string}, \eqref{eq:imaginary string} are called the
{\it $i$-string decomposition} of $u$.

\vskip 3mm

Given the $i$-string decompositions \eqref{eq:real string}, \eqref{eq:imaginary string},
we define the {\it lower Kashiwara operators} on $M$ as follows.

\vskip 3mm

\begin{definition} \label{def:Kashiwara operator} \hfill
	
	\vskip 3mm
	
	(a) For $i \in I^{\text{re}}$, we define
	
	\begin{equation*} 
		\begin{aligned}
			\widetilde{e}_{i} u = \sum_{k \ge 1} {\mathtt b}_{i}^{(k-1)} u_{k}, \quad
		 \widetilde{f}_{i}  u = \sum_{k \ge 0} {\mathtt b}_{i}^{(k+1)} u_{k}.
		\end{aligned}
	\end{equation*}
	
	\vskip 2mm
	
	(b) For $i \in I^{\text{im}}\setminus I^{\text{iso}}$ and $l>0$, we define
	
	\begin{equation*} 
		\begin{aligned}
		\widetilde{e}_{il} u = \sum_{\mathbf{c} \in {\mathcal C}_{i}:  c_1 = l} \,
			{\mathtt b}_{i, \mathbf{c} \setminus c_1} u_{\mathbf{c}},\quad
			 \widetilde{f}_{il}  u = \sum_{\mathbf{c} \in {\mathcal C}_{i}}
			{\mathtt b}_{i,(l, \mathbf{c})} u_{\mathbf{c}}.
		\end{aligned}
	\end{equation*}
	
	\vskip 2mm
	
	(c) For $i \in I^{\text{iso}}$ and $l>0$, we define
	
	\begin{equation*}
		\begin{aligned}
 \widetilde{e}_{il} u = \sum_{\mathbf{c} \in {\mathcal C}_{i}} \,  \mathbf{c}_{l} \,
			{\mathtt b}_{i, \mathbf{c} \setminus \{l\}} u_{\mathbf{c}},\quad
 \widetilde{f}_{il}  u = \sum_{\mathbf{c} \in {\mathcal C}_{i}} \, \frac{1} {\mathbf{c}_{l} + 1} \,
			{\mathtt b}_{i,\{l\} \cup \mathbf{c})} u_{\mathbf{c}},
		\end{aligned}
	\end{equation*}
	where ${\mathbf c}_{l}$ denotes the number of $l$ in $\mathbf{c}$.
	
\end{definition}

\vskip 2mm

It is easy to see that $\widetilde{e}_{il}\circ\widetilde{f}_{il}=\text{id}_{M_{\mu}}$ for
$(i,l) \in I^{\infty}$ and $\langle h_i,\mu \rangle>0$.

\vskip 3mm

Let $\A_{0} = \{f \in \Q(q) \mid \text{$f$ is regular at $q=0$} \}$.
Then we have an isomorphism

$$\A_{0} / q \A_{0} \cong \Q, \quad f + q \A_{0} \longmapsto f(0).$$

\vskip 3mm

\begin{definition} \label{def:crystal lattice Llambda} 
	
	Let $M$ be a $U_{q}(\g)$-module in the category ${\mathcal O}_{\text{int}}$
	and let $L$ be a free $\A_{0}$-submodule of $M$.
	The submodule $L$ is called a {\it crystal lattice} of $M$ if
	
	\begin{enumerate}
		
		\vskip 2mm
		
		\item[(a)] $\Q \otimes_{\A_{0}} L \cong M$,
		
		\vskip 2mm
		
		\item[(b)] $L =\oplus_{\mu \in P} L_{\mu}$, where $L_{\mu} = L \cap M_{\mu}$,
		
		\vskip 2mm
		
		\item[(c)] $\widetilde{e}_{il} L \subset L$, $\widetilde{f}_{il} L \subset L$
		for $(i, l) \in I^{\infty}$.
		
	\end{enumerate}
	
\end{definition}

\vskip 3mm

Since the operators $\widetilde e_{il}$, $\widetilde f_{il}$ preserve $L$, by abuse of notations,
they induce the operators

$$\widetilde{e}_{il}, \, \widetilde{f}_{il}: L/qL \longrightarrow L/qL.$$

\vskip 3mm

\begin{definition} \label{def:crystal basis Blambda} 
	
	Let $M$ be a $U_{q}(\g)$-module in the category ${\mathcal O}_{\text{int}}$.
	A {\it crystal basis} of $M$ is a pair $(L, B)$ such that
	\begin{enumerate}
		
		\vskip 2mm
		
		\item[(a)] $L$ is a crystal lattice of $M$,
		
		\vskip 2mm
		
		\item[(b)] $B$ is a $\Q$-basis of $L/qL$,
		
		\vskip 2mm
		
		\item[(c)] $B=\sqcup_{\mu \in P}\ B_\mu$, where
		$B_\mu=B\cap{(L/qL)}_\mu$,
		
		\vskip 2mm
		
		\item[(d)] $\widetilde{e}_{il}B\subset B\cup\{0\}$, $\widetilde{f}_{il} B\subset B\cup\{0\}$
		for $(i, l) \in I^{\infty}$,
		
		\vskip 2mm
		
		\item[(e)] for any $b,b'\in B$ and $(i,l)\in I^{\infty}$, we have
		$\widetilde{f}_{il}b=b'$ if and only if $b=\widetilde{e}_{il}b'$.
		
	\end{enumerate}
\end{definition}

For $(i,l)\in I^\infty$, we set $E_{il}:=-K_i^l\mathtt a_{il}$. Then by \eqref{eq:primitive}, we have

\begin{align*}
	E_{il}\mathtt b_{jk}-q_i^{-kla_{ij}}\mathtt b_{jk}E_{il}=\delta_{ij}\delta_{kl}\frac{1-K_i^{2l}}{1-q_i^{2l}}.\label{Eb}
\end{align*}

\vskip 2mm

\begin{lemma}\cite{FHKK}
	\label{euEuqL}
	{\rm
		Let $M$ be a $U_q(\g)$-module in the category ${\mathcal O}_{\text{\rm int}}$
		and $(L, B)$ be a crystal basis of $M$.
		For any $u\in M_{\mu}$, we have
		
		\begin{equation*}
			\widetilde{e}_{il}\,u \equiv E_{il}\,u \ \text{mod} \ qL
			\ \ \text{for} \ (i,l) \in I^{\infty}.
		\end{equation*}
	}
\end{lemma}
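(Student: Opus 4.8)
The plan is to reduce to the $i$-string decomposition and analyze the action of $E_{il}$ on each $i$-string, just as one does in the classical Kashiwara theory. Fix $u\in M_\mu$ and write its $i$-string decomposition: for $i\in I^{\mathrm{re}}$ this is $u=\sum_{k\ge 0}\mathtt b_i^{(k)}u_k$ with $\mathtt a_i u_k=0$, while for $i\in I^{\mathrm{im}}$ it is $u=\sum_{\mathbf c\in\mathcal C_i}\mathtt b_{i,\mathbf c}u_{\mathbf c}$ with $\mathtt a_{ik}u_{\mathbf c}=0$ for all $k>0$. Since $E_{il}=-K_i^l\mathtt a_{il}$ and both $\widetilde e_{il}$ and $E_{il}$ act $\mathbf A_0$-linearly in the appropriate sense and preserve the decomposition $M=\oplus M_\mu$, it suffices to check the congruence on a single string-generator summand $\mathtt b_{i,\mathbf c}u_{\mathbf c}$ (respectively $\mathtt b_i^{(k)}u_k$), i.e. to compute $E_{il}\,\mathtt b_{i,\mathbf c}$ acting on a vector killed by all $\mathtt a_{ik}$, $k>0$.

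First I would use the commutation relation $E_{il}\mathtt b_{jk}-q_i^{-kla_{ij}}\mathtt b_{jk}E_{il}=\delta_{ij}\delta_{kl}\frac{1-K_i^{2l}}{1-q_i^{2l}}$ displayed just above the lemma, applied repeatedly, to move $E_{il}$ past the factors of $\mathtt b_{i,\mathbf c}=\mathtt b_{ic_1}\cdots\mathtt b_{ic_r}$. Each time $E_{il}$ passes a factor $\mathtt b_{ic_j}$ with $c_j\ne l$ it only picks up a scalar $q_i^{-c_j l a_{ii}}$; each time it passes a factor $\mathtt b_{il}$ it either commutes with a scalar or produces a new term in which that $\mathtt b_{il}$ is deleted and replaced by $\frac{1-K_i^{2l}}{1-q_i^{2l}}$, which on a weight vector equals $\tau_{il}(1-K_i^{2l})$ acting by a power of $q$. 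When $E_{il}$ finally reaches $u_{\mathbf c}$ it kills it, since $\mathtt a_{ik}u_{\mathbf c}=0$. Collecting terms, $E_{il}\,\mathtt b_{i,\mathbf c}u_{\mathbf c}$ becomes a sum, over the positions where an $l$ appears in $\mathbf c$, of the string $\mathbf c$ with that $l$ removed, times a coefficient that is a product of $q$-powers and a factor $\frac{1-q_i^{2l\langle h_i,\nu\rangle}}{1-q_i^{2l}}$ coming from evaluating $1-K_i^{2l}$ on the relevant weight $\nu$. The key point is then to show that modulo $q\mathbf A_0$ this coefficient reduces to exactly the combinatorial multiplicity appearing in Definition \ref{def:Kashiwara operator}: $1$ in the real and non-isotropic imaginary cases, and the count $\mathbf c_l$ of $l$'s in $\mathbf c$ in the isotropic case (where all arrangements of $\mathbf c\setminus\{l\}$ collapse to one partition, so the $\mathbf c_l$ distinct positions each contribute $1$ modulo $q$, summing to $\mathbf c_l$).

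The main obstacle — and the part requiring genuine care rather than bookkeeping — is the coefficient comparison modulo $q\mathbf A_0$ in the imaginary isotropic case, where one must track how the $q$-integers and the $\tau_{il}$, $K_i^l$ factors interact with the symmetrizer $\frac{1}{[n]_i!}$-type normalizations implicit in passing from ordered compositions to partitions, and confirm that the classical limit $q\to 0$ produces precisely the integer $\mathbf c_l$ and not some $q$-corrected version. For $i\in I^{\mathrm{re}}$ this is the standard computation $E_i\,\mathtt b_i^{(k)}u_k\equiv\mathtt b_i^{(k-1)}u_k\bmod qL$ from \cite{Kashi91}, and the non-isotropic imaginary case is handled in \cite{Bozec2014b, Bozec2014c}; so in practice the work is to verify that the relation displayed above the lemma, together with $\widetilde e_{il}\circ\widetilde f_{il}=\mathrm{id}$ on $M_\mu$ with $\langle h_i,\mu\rangle>0$, forces the congruence uniformly, and that the $\langle h_i,\mu\rangle=0$ boundary cases are consistent because both sides vanish there by condition (iii) of the string decomposition and condition (e) of Definition \ref{def:Oint}.
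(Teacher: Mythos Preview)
The paper does not prove this lemma; it is cited from \cite{FHKK}, so there is no in-paper proof to compare against. Your strategy---string decomposition, push $E_{il}$ through $\mathtt b_{i,\mathbf c}$ via the displayed commutation relation, then match coefficients with Definition~\ref{def:Kashiwara operator} modulo $q\A_0$---is the standard one and almost certainly coincides with the argument in \cite{FHKK}. The case analysis you sketch is correct, including the isotropic case where commutativity of the $\mathtt b_{ik}$ (from $a_{ii}=0$) collapses the $\mathbf c_l$ deletion terms to a single vector with multiplicity $\mathbf c_l$, and the non-isotropic case where the pass-through factor $q_i^{-c_1la_{ii}}\in q\A_0$ suppresses all but the deletion of $c_1$.

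One genuine point you pass over: after writing $(E_{il}-\widetilde e_{il})(\mathtt b_{i,\mathbf c}u_{\mathbf c})$ as a sum of terms of the form (element of $q\A_0$)$\cdot\mathtt b_{i,\mathbf c'}u_{\mathbf c}$, you still need those vectors to lie in $L$ in order to conclude that the difference lies in $qL$. This amounts to the string property of the crystal lattice---that the components $u_{\mathbf c}$ of an element $u\in L$ themselves lie in $L$---which is \emph{not} part of the abstract Definition~\ref{def:crystal lattice Llambda} but is one of the auxiliary statements proved inside the grand-loop induction for Theorems~\ref{thm:crystal basis of V} and~\ref{thm:crystal basis of U} in \cite{FHKK}. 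You should invoke it explicitly; once granted, your argument goes through. (Relatedly, the hypothesis should really be read as $u\in L_\mu$ rather than arbitrary $u\in M_\mu$, since otherwise the congruence modulo $qL$ is vacuous under scaling.)
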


\vskip 2mm

Let $V(\lambda)=U_q(\mathfrak g)v_\lambda$ be the irreducible highest weight $U_q(\g)$-module
with highest weight $\lambda \in P^+$.
Let $L(\lambda)$ be the free $\A_0$-submodule of $V(\lambda)$
spanned by $\widetilde{f}_{i_1l_1}\cdots\widetilde{f}_{i_rl_r} v_\lambda$ $(r\geq 0, (i_k,l_k)\in I^{\infty})$
and let

\begin{align*}
	B(\lambda):=\{\widetilde{f}_{i_1l_1}\cdots\widetilde{f}_{i_rl_r}v_\lambda+qL(\lambda)\}\setminus\{0\}.
\end{align*}

\vskip 3mm

\begin{theorem}\cite{FHKK}	
	\label{thm:crystal basis of V}	
	{\rm
		The pair $(L(\lambda),B(\lambda))$ is a crystal basis of $V(\lambda)$.
	}
\end{theorem}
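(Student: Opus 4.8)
The plan is to follow the Kashiwara grand-loop strategy, adapted to the Borcherds–Bozec setting as was done for $B(\infty)$ in \cite{FHKK}. The theorem asserts that $(L(\lambda), B(\lambda))$ is a crystal basis of $V(\lambda)$; the subtle points are that $L(\lambda)$ is a genuine crystal lattice (stable under all $\widetilde{e}_{il}, \widetilde{f}_{il}$, not just the $\widetilde{f}_{il}$ used to define it) and that $B(\lambda)$ is an honest basis of $L(\lambda)/qL(\lambda)$ with the stated compatibility with the Kashiwara operators. Since the category $\mathcal O_{\mathrm{int}}$ is semisimple and $V(\lambda)$ is its simple object of highest weight $\lambda$, the cleanest route is to build the crystal basis of $V(\lambda)$ from that of $U_q^-(\g)$: realize $V(\lambda)$ as a quotient $U_q^-(\g)/N(\lambda)$ where $N(\lambda) = \sum_{(i,l)} U_q^-(\g)\, \mathtt b_{il}^{1+\langle h_i,\lambda\rangle/?}$ is the appropriate left ideal (for $i\in I^{\mathrm{re}}$ the usual power of $\mathtt b_i$, and for $i\in I^{\mathrm{im}}$ the relations coming from (e), (f) of Definition \ref{def:Oint}), and transport the already-constructed crystal basis $(L(\infty), B(\infty))$ of $U_q^-(\g)$ through the projection $\pi_\lambda: U_q^-(\g) \twoheadrightarrow V(\lambda)$, $u\mapsto u\, v_\lambda$.

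Concretely, first I would verify that $\pi_\lambda(L(\infty)) = L(\lambda)$: the inclusion $\subset$ follows because $L(\infty)$ is spanned by $\widetilde{f}_{i_1l_1}\cdots\widetilde{f}_{i_rl_r}\cdot 1$ and $\pi_\lambda$ intertwines the lower Kashiwara operators on $U_q^-(\g)$ with those on $V(\lambda)$ on the image (this intertwining is the technical core and must be checked on $i$-string decompositions, using that $\widetilde e_{il}\circ\widetilde f_{il} = \mathrm{id}$ when $\langle h_i,\mu\rangle > 0$ and that the ``boundary'' terms killed by condition (iii) in \eqref{eq:real string}, \eqref{eq:imaginary string} are exactly the ones sent to $0$ in $V(\lambda)$); the reverse inclusion is immediate from the definition of $L(\lambda)$. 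Granting this, $L(\lambda) = \pi_\lambda(L(\infty))$ is an $\A_0$-lattice with $\Q\otimes_{\A_0}L(\lambda)\cong V(\lambda)$, it is the direct sum of its weight pieces, and conditions (a)–(c) of Definition \ref{def:crystal lattice Llambda} hold because they hold for $L(\infty)$ and are preserved by $\pi_\lambda$ together with the intertwining property. Then $B(\lambda)$, being by definition the nonzero images of the spanning monomials, equals $\{\pi_\lambda(b)\bmod qL(\lambda): b\in B(\infty)\}\setminus\{0\}$, so it spans $L(\lambda)/qL(\lambda)$; linear independence follows because $\pi_\lambda$ induces a surjection $L(\infty)/qL(\infty)\to L(\lambda)/qL(\lambda)$ whose kernel is spanned by a subset of $B(\infty)$ (this is where semisimplicity of $\mathcal O_{\mathrm{int}}$ and the structure of $N(\lambda)$ enter — one needs that $N(\lambda)\cap L(\infty)$ is a ``based'' sub-object, i.e.\ spanned mod $q$ by part of $B(\infty)$, which is the analogue of Kashiwara's result that the defining ideal is compatible with the crystal lattice). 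Properties (d) and (e) of Definition \ref{def:crystal basis Blambda} then transfer from $B(\infty)$: for $b,b'\in B(\lambda)$ with lifts $\tilde b,\tilde b'\in B(\infty)$, $\widetilde f_{il}b = b'$ in $L(\lambda)/qL(\lambda)$ holds iff $\widetilde f_{il}\tilde b \equiv \tilde b' \bmod (qL(\infty)+N(\lambda))$, and by the crystal-basis property of $B(\infty)$ this is equivalent to $\tilde b \equiv \widetilde e_{il}\tilde b'$, hence to $b = \widetilde e_{il}b'$.

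The main obstacle I anticipate is establishing the intertwining $\pi_\lambda\circ\widetilde{f}_{il} = \widetilde{f}_{il}\circ\pi_\lambda$ (and the analogous statement for $\widetilde e_{il}$ up to the boundary corrections) together with the compatibility of the left ideal $N(\lambda)$ with the crystal lattice $L(\infty)$ — i.e.\ that passing to the quotient $V(\lambda)$ does not destroy the integral/crystal structure. In the real rank-one directions this is classical Kashiwara theory; the genuinely new work is the imaginary directions, where the $i$-string decomposition \eqref{eq:imaginary string} is indexed by compositions (or partitions, in the isotropic case) $\mathbf c\in\mathcal C_i$ rather than by a single integer, and one must check that the operators of Definition \ref{def:Kashiwara operator}(b),(c) behave well modulo $q$ and that the vectors killed by conditions (e), (f) of Definition \ref{def:Oint} are precisely those that vanish after applying $\pi_\lambda$. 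I would handle this by reducing, for each fixed $i$, to the representation theory of the rank-one quantum Borcherds-Bozec algebra $U_q(\g_i)$ acting on $V(\lambda)$, using Lemma \ref{euEuqL} (which identifies $\widetilde e_{il}$ with $E_{il} = -K_i^l\mathtt a_{il}$ modulo $qL$) to make the comparison with $B(\infty)$ explicit, and then invoking the already-proven crystal basis theorem for $U_q^-(\g)$ to conclude.
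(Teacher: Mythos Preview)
The paper does not prove this theorem: it is stated with the citation \cite{FHKK} and no argument is given. So there is no ``paper's own proof'' to compare against; the result is imported wholesale from the companion paper on crystal bases for quantum Borcherds--Bozec algebras.

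That said, a brief remark on your plan. The overall shape --- Kashiwara's grand-loop argument, comparison with $(L(\infty),B(\infty))$ via the projection $\pi_\lambda:U_q^-(\g)\twoheadrightarrow V(\lambda)$, and rank-one reduction for the imaginary directions --- is indeed how \cite{FHKK} proceeds. But your concrete plan treats $(L(\infty),B(\infty))$ as \emph{already} established and then pushes it forward to $V(\lambda)$. In the actual grand loop this is circular: the existence of the crystal basis of $U_q^-(\g)$ at a given weight level uses the crystal bases of the $V(\lambda)$ at lower levels (through the tensor-product/embedding arguments and the compatibility of $\pi_\lambda$ with the Kashiwara operators), and conversely. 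The statements for $V(\lambda)$ and for $U_q^-(\g)$ are proved simultaneously by a joint induction on $|\alpha|$; you cannot cleanly separate ``first do $B(\infty)$, then deduce $B(\lambda)$''. Relatedly, the key technical point you flag --- that the kernel $N(\lambda)$ of $\pi_\lambda$ is compatible with $L(\infty)$ in the sense that $N(\lambda)\cap L(\infty)$ is spanned modulo $q$ by a subset of $B(\infty)$ --- is not something you can check directly from the description of $N(\lambda)$ as a left ideal; it is one of the outputs of the grand-loop induction, not an input. Your description of $N(\lambda)$ (with the placeholder ``$?$'') is also not quite right in the imaginary case: for $i\in I^{\mathrm{im}}$ the relevant relations are $\mathtt b_{il}v_\lambda=0$ whenever $\langle h_i,\lambda\rangle=0$, not a power of $\mathtt b_{il}$.
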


\vskip 3mm

\subsection{Crystal bases for $U_{q}^{-}(\g)$} \label{sub:Uminus} \hfill
\vskip 2mm

Let $(i, l) \in I^{\infty}$ and $S\in U_{q}^-(\g)$.
Then there exist unique elements $T, W \in U_{q}^-(\g)$ such that
\begin{equation*}
	\mathtt a_{il}S-S\mathtt a_{il}=\frac{K_i^{l}T-K_i^{-l}W}{1-q_i^{2l}}.
\end{equation*}

\vskip 2mm

Set $A_{il}:=-q^l_i\mathtt a_{il}$. Then we have

\begin{equation}\label{eq:AP-PA}
	A_{il}S-SA_{il}=\frac{K_i^lT-K_i^{-l}W}{q_i^l-q_i^{-l}}.
\end{equation}

\vskip 3mm

We  define the operators $e'_{il}, e''_{il}:U_{q}^-(\g) \longrightarrow U_{q}^-(\g)$ by

\begin{equation}\label{eq:eP=R}
	e'_{il}(S)=W,\quad e''_{il}(S)=T.
\end{equation}

\vskip 3mm

\noindent
By \eqref{eq:AP-PA} and \eqref{eq:eP=R}, we have

\begin{equation}\label{eq:AP-PA=}
	A_{il}S-SA_{il}=\frac{K_i^l(e''_{il}(S))-K_i^{-l}(e'_{il}(S))}{q_i^l-q_i^{-l}}.
\end{equation}

\vskip 3mm

Therefore we obtain

\begin{equation*}
	\begin{aligned}
		&e'_{il}\mathtt b_{jk}=\delta_{ij}\delta_{kl}+q_i^{-kla_{ij}}\mathtt b_{jk}e'_{il},\\
		&e''_{il}\mathtt b_{jk}=\delta_{ij}\delta_{kl}+q_i^{kla_{ij}}\mathtt b_{jk}e''_{il},\\
		&e'_{il}e''_{jk}=q_i^{kla_{ij}}e''_{jk}e'_{il}.
	\end{aligned}
\end{equation*}

\vskip 4mm

Let $u \in U_{q}^-(\g)_{-\alpha}$ with $\alpha \in R_{+}$.
For $i \in I^{\text{re}}$, by \cite{Kashi91}, the vector $u$ can be written uniquely as
\begin{equation} \label{eq:real string U}
	u = \sum_{k \ge 0} {\mathtt b}_{i}^{(k)} u_k
\end{equation}
such that

\vskip  2mm

\begin{enumerate}
	
	\item[(i)] $e_{i}'u_k = 0$ for all $k \ge 0$,
	
	\vskip 2mm
	
	\item[(ii)] $u_k \in U_{q}^-(\g)_{-\alpha + k \alpha_{i}}$,
	
	\vskip 2mm
	
	\item[(iii)] $u_{k} = 0$ if $\langle h_{i}, -\alpha + k \alpha_{i} \rangle = 0 $.
	
\end{enumerate}

\vskip 3mm

For $i \in I^{\text{im}}$, by \cite{Bozec2014b, Bozec2014c}, the vector
$u$ can be written uniquely as
\begin{equation} \label{eq:imaginary string U}
	u = \sum_{\mathbf c\in \mathcal C_i} {\mathtt b}_{i, \mathbf{c}} u_{\mathbf{c}}
\end{equation}
such that

\vskip 2mm

\begin{enumerate}
	
	\item[(i)] $e_{ik}' u_{\mathbf{c}} = 0$ for all $k >0$,
	
	\vskip 2mm
	
	\item[(ii)] $u_{\mathbf{c}} \in U_{q}^-(\g)_{- \alpha+ |\mathbf{c}| \alpha_{i}}$,
	
	\vskip 2mm
	
	\item[(iii)] $u_{\mathbf{c}} = 0$ if $\langle h_{i}, - \alpha+ |\mathbf{c}| \alpha_{i} \rangle = 0$.
	
\end{enumerate}

\vskip 2mm

The expressions \eqref{eq:real string U}, \eqref{eq:imaginary string U} are called the
{\it $i$-string decomposition} of $u$.





\vskip 3mm

Given the $i$-string decompositions \eqref{eq:real string U}, \eqref{eq:imaginary string U},
we define the {\it lower Kashiwara operators} on $U_{q}^-(\g)$ as follows.

\vskip 3mm

\begin{definition} \label{def:Kashiwara operators} \hfill
	
	\vskip 2mm
	
	(a) For $i \in I^{\text{re}}$, we define
	
	\begin{equation*} 
		\begin{aligned}
	 \widetilde{e}_{i} u = \sum_{k \ge 1} {\mathtt b}_{i}^{(k-1)} u_{k}, \quad
	 \widetilde{f}_{i}  u = \sum_{k \ge 0} {\mathtt b}_{i}^{(k+1)} u_{k}.
		\end{aligned}
	\end{equation*}
	
	\vskip 2mm
	
	(b) For $i \in I^{\text{im}}\setminus I^{\text{iso}}$ and $l>0$, we define
	
	\begin{equation*} 
		\begin{aligned}
		 \widetilde{e}_{il} u = \sum_{\mathbf{c} \in {\mathcal C}_{i}:  c_1 = l} \,
			{\mathtt b}_{i, \mathbf{c} \setminus c_1} u_{\mathbf{c}},\quad
		 \widetilde{f}_{il}  u = \sum_{\mathbf{c} \in {\mathcal C}_{i}}
			{\mathtt b}_{i,(l, \mathbf{c})} u_{\mathbf{c}}.
		\end{aligned}
	\end{equation*}
	
	\vskip 2mm
	
	(c) For $i \in I^{\text{iso}}$ and $l>0$, we define
	
	\begin{equation*} 
		\begin{aligned}
	\widetilde{e}_{il} u = \sum_{\mathbf{c} \in {\mathcal C}_{i}} \, \mathbf{c}_{l} \,
			{\mathtt b}_{i, \mathbf{c} \setminus \{l\}} u_{\mathbf{c}},\quad
	 \widetilde{f}_{il}  u = \sum_{\mathbf{c} \in {\mathcal C}_{i}} \, \frac{1}{ \mathbf{c}_{l} + 1} \,
			{\mathtt b}_{i,\{l\} \cup \mathbf{c})} u_{\mathbf{c}},
		\end{aligned}
	\end{equation*}
	where ${\mathbf c}_{l}$ denotes the number of $l$ in $\mathbf{c}$.
	
\end{definition}

\vskip 2mm

It is easy to see that $\widetilde{e}_{il}\circ\widetilde{f}_{il}=\text{id}_{U_{q}^-(\g)_{-\alpha}}$ for
$(i,l) \in I^{\infty}$ and $\langle h_i, - \alpha \rangle>0$.

\vskip 3mm

\begin{definition} \label{def:crystal lattice U}
	
	\vskip 2mm
	
	A free $\A_0$-submodule $ L$ of $U_{q}^-(\g)$ is called a {\it crystal lattice} if the following conditions hold.
	
	\vskip 2mm
	
	\begin{enumerate}
		
		\vskip 2mm
		
		\item[{\rm (a)}] $\Q(q)\otimes_{\mathbf A_0} L\cong U_q^-(\g)$,
		
		\vskip 2mm
		
		\item[{\rm (b)}] $ L=\oplus_{\alpha\in R_+} L_{-\alpha}$, where $ L_{-\alpha}= L\cap {U_{q}^-(\g)}_{-\alpha}$,
		
		\vskip 2mm
		
		\item[{\rm (c)}] $\widetilde e_{il} L\subset  L$, \ $\widetilde f_{il} L\subset  L$ for all $(i,l)\in I^{\infty}$.
		
	\end{enumerate}
	
\end{definition}

\vskip 3mm

The condition (c) yields  the $\Q$-linear maps

$${\widetilde e}_{il}, \, {\widetilde f}_{il}: L/qL \longrightarrow L/qL.$$

\vskip 3mm

\begin{definition}
	
	A {\it crystal basis} of $U_{q}^-(\g)$ is a pair $( L, B)$ such that
	
	\vskip 2mm
	
	\begin{enumerate}
		
		\vskip 2mm
		
		\item[{\rm (a)}] $L$ is a crystal lattice of $U_{q}^-(\g)$,
		
		\vskip 2mm
		
		\item[{\rm (b)}] $B$ is a $\Q$-basis of $L/q L$,
		
		\vskip 2mm
		
		\item[{\rm (c)}] $ B=\sqcup_{\alpha \in R_+} B_{-\alpha}$, where
		$ B_{-\alpha}= B\cap{( L/q L)}_{-\alpha}$,
		
		\vskip 2mm
		
		\item[{\rm (d)}] $\widetilde{e}_{il} B\subset B\cup\{0\}$, \ $\widetilde{f}_{il} B \subset B\cup\{0\}$ \,
		for $(i,l) \in I^{\infty}$,
		
		\vskip 2mm
		
		\item[{\rm (e)}] for any $b,b'\in B$ and $(i,l) \in I^{\infty}$,
		we have $\widetilde{f}_{il}b=b'$ if and only if $b=\widetilde{e}_{il}b'$.
		
	\end{enumerate}
	
\end{definition}

\vskip 2mm

Let $L(\infty)$ be the $\A_0$-submodule of $U_{q}^-(\g)$ spanned by $\widetilde{f}_{i_1l_1}\cdots\widetilde{f}_{i_rl_r}\mathbf 1$ $(r\geq 0, (i_j,l_j)\in I_{\infty})$,
and $B(\infty)=\{\widetilde{f}_{i_1l_1}\cdots\widetilde{f}_{i_rl_r}\mathbf 1+qL(\infty)\}$.

\vskip 3mm

\begin{theorem}\cite{FHKK}
	\label{thm:crystal basis of U}
	{\rm
		The pair  $(L(\infty),B(\infty))$ is a crystal basis of  $U^-_q(\mathfrak g)$.
	}
\end{theorem}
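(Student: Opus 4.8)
The plan is to mimic Kashiwara's grand-loop argument for quantum groups, as adapted to the Borcherds-Bozec setting. The statement asserts that $(L(\infty), B(\infty))$ is a crystal basis of $U_q^-(\g)$, so I must verify the five conditions in the definition of crystal basis: $L(\infty)$ is a crystal lattice; $B(\infty)$ is a $\Q$-basis of $L(\infty)/qL(\infty)$; $B(\infty)$ is compatible with the weight decomposition; the Kashiwara operators $\widetilde e_{il}, \widetilde f_{il}$ stabilize $B(\infty) \cup \{0\}$; and $\widetilde f_{il} b = b'$ iff $b = \widetilde e_{il} b'$ on $B(\infty)$. Since the excerpt attributes this theorem to \cite{FHKK}, the natural approach is to invoke the parallel statement for $V(\lambda)$ (Theorem~\ref{thm:crystal basis of V}) together with the standard tensor-product / embedding machinery that relates $B(\infty)$ to the $B(\lambda)$'s.

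The main engine I would use is the following: for $\lambda \in P^+$ there is a surjective $U_q^-(\g)$-module map $\pi_\lambda : U_q^-(\g) \to V(\lambda)$, $u \mapsto u v_\lambda$, and this map is compatible with the lower Kashiwara operators in the sense that $\pi_\lambda(\widetilde f_{il} u) = \widetilde f_{il} \pi_\lambda(u)$, while $\pi_\lambda(\widetilde e_{il} u) \equiv \widetilde e_{il}\pi_\lambda(u)$ modulo lower-order terms controlled by $\langle h_i, \lambda\rangle$. First I would establish that $\pi_\lambda(L(\infty)) = L(\lambda)$ and that $\pi_\lambda$ induces a map $L(\infty)/qL(\infty) \to L(\lambda)/qL(\lambda)$ carrying $B(\infty)$ onto $B(\lambda) \cup \{0\}$; this uses the definitions of $L(\infty), B(\infty), L(\lambda), B(\lambda)$ as spans of strings of $\widetilde f$'s applied to the (co)cyclic vector. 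Then, taking $\lambda$ sufficiently dominant (or passing to the inverse limit over all $\lambda \in P^+$, using condition~(d) of $\mathcal O_{\mathrm{int}}$ to see that for fixed weight the relevant data stabilize), I would lift the crystal-basis axioms for $(L(\lambda), B(\lambda))$ back to $(L(\infty), B(\infty))$: linear independence of $B(\infty)$ in $L(\infty)/qL(\infty)$ follows because for each $\alpha \in R_+$ and each putative relation there is a $\lambda$ large enough that $\pi_\lambda$ is injective on the relevant weight space; the compatibility of $\widetilde e_{il}, \widetilde f_{il}$ with $B(\infty)\cup\{0\}$ and axiom~(e) descend similarly, using Lemma~\ref{euEuqL} to identify $\widetilde e_{il}$ with $E_{il}$ modulo $qL$ so that the operator identities $e'_{il}\mathtt b_{jk} = \delta_{ij}\delta_{kl} + q_i^{-kla_{ij}}\mathtt b_{jk}e'_{il}$ (and its $e''$ companion) translate into the needed near-commutation relations on the crystal lattice.

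The key structural point specific to Borcherds-Bozec algebras — and the step I expect to be the main obstacle — is handling the imaginary simple roots, especially the isotropic ones in $I^{\mathrm{iso}}$, where a single node contributes a whole polynomial-ring's worth of generators $\mathtt b_{il}$ indexed by partitions, and the Kashiwara operators $\widetilde e_{il}, \widetilde f_{il}$ carry the combinatorial multiplicity factors $\mathbf c_l$ and $1/(\mathbf c_l + 1)$. One must check that $L(\infty)$ is genuinely $\widetilde e_{il}$- and $\widetilde f_{il}$-stable despite these rational coefficients — i.e. that no denominators escape $\A_0$ — and that the $i$-string decomposition \eqref{eq:imaginary string U} behaves well under the recursion. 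I would treat this by an induction on height $|\alpha|$: for the rank-one imaginary subalgebra the claim reduces to an explicit computation in a polynomial ring with its standard crystal structure (parametrized by multipartitions), which is known; for the general case the grand-loop simultaneously proves, by induction on $|\alpha|$, the stability statements, the injectivity of $\pi_\lambda$ on weight spaces for $\lambda \gg 0$, and the axioms (d), (e), each step feeding the next. The real and the non-isotropic imaginary cases are then handled exactly as in \cite{Kashi91} and \cite{Bozec2014b, Bozec2014c} respectively, and the only genuinely new bookkeeping is the isotropic one.
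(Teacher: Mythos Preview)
The paper does not give its own proof of this theorem: it is stated with a citation to \cite{FHKK} and no argument is supplied. So there is nothing in the present paper to compare your proposal against; the result is imported wholesale.

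That said, your sketch is a reasonable outline of how the cited proof goes. One caution: in Kashiwara's original scheme and in its Borcherds and Borcherds--Bozec adaptations, Theorems~\ref{thm:crystal basis of V} and~\ref{thm:crystal basis of U} are not proved sequentially but \emph{simultaneously} by the grand-loop induction on $|\alpha|$; the statements for $V(\lambda)$ at height $n$ feed into those for $U_q^-(\g)$ at height $n$, which in turn feed back into $V(\lambda)$ at height $n+1$. Your proposal treats Theorem~\ref{thm:crystal basis of V} as already established and then lifts to $U_q^-(\g)$ via the maps $\pi_\lambda$ for $\lambda \gg 0$. This is legitimate once Theorem~\ref{thm:crystal basis of V} is granted in full, but you should be aware that in the actual source \cite{FHKK} the two theorems are proved together, and the compatibility $\pi_\lambda(\widetilde e_{il} u) = \widetilde e_{il}\pi_\lambda(u)$ (not merely modulo lower terms) for $\lambda$ large relative to the weight is itself one of the inductive hypotheses in the loop, not a consequence one derives afterward. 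Your identification of the isotropic case as the delicate point is accurate: the rational factors $\mathbf c_l$ and $1/(\mathbf c_l+1)$ do require care, and the rank-one verification you allude to is exactly what anchors that part of the induction.
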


By extracting the fundamental properties of the crystal bases of $V(\lambda)$
and $U_{q}^-(\g)$, we define the notion of abstract crystals as follows.

\vskip 3mm

\begin{definition}\label{def: abstract crystal} 
	
	An {\it abstract crystal}  is a set $B$ together with the maps ${\rm wt}\colon  B \rightarrow P$,
	$\varphi_i,\varepsilon_i\colon  B\rightarrow \Z\cup \{-\infty\}$ $(i\in I)$ and $\widetilde{e}_{il},\widetilde{f}_{il}\colon  B\rightarrow   B\cup \{0\}$ $((i,l)\in I^\infty)$ satisfying the following conditions:
	\vskip 2mm
	\begin{enumerate}
		\item[{\rm (a)}] $\text{wt}(\widetilde{f}_{il}b)=\text{wt}(b)-l\alpha_i$ if $\widetilde{f}_{il}b\neq 0$,\quad $\text{wt}(\widetilde{e}_{il}b)=\text{wt}(b)+l\alpha_i$ if $\widetilde{e}_{il}b\neq 0$.
		
		\vskip 2mm
		
		\item[{\rm (b)}] $\varphi_i(b)= \langle h_i, \text{wt}(b) \rangle+\varepsilon_i(b)$ for $i\in I$ and $b\in  B$.
		
		\vskip 2mm
		
		\item[{\rm (c)}] $\widetilde{f}_{il}b=b'$ if and only if $b=\widetilde{e}_{il}b'$ for $(i,l)\in I^\infty$ and $b,b'\in  B$.
		
		\vskip 2mm
		
		\item[{\rm (d)}]  For any $i\in I^{\text{re}}$ and $b\in  B$, we have
		\vskip 2mm
		\begin{itemize}
			\item [{\rm (i)}] $\varepsilon_i(\widetilde{f}_ib)=\varepsilon_i(b)+1$, $\varphi_i(\widetilde{f}_ib)=\varphi_i(b)-1$ if $\widetilde{f}_ib\neq 0$,
			\vskip 2mm
			\item [{\rm (ii)}] $\varepsilon_i(\widetilde{e}_ib)=\varepsilon_i(b)-1$, $\varphi_i(\widetilde{e}_ib)=\varphi_i(b)+1$ if $\widetilde{e}_ib\neq 0$.
		\end{itemize}
		
		\vskip 2mm
		
		\item[{\rm (e)}] For any $i\in I^{\text{im}}$, $l>0$ and $b\in B$, we have
		\vskip 2mm
		\begin{itemize}
			\item [{\rm (i$'$)}] $\varepsilon_i(\widetilde{f}_{il}b)=\varepsilon_i(b)$, $\varphi_i(\widetilde{f}_{il}b)=\varphi_i(b)-la_{ii}$ if $\widetilde{f}_{il}b\neq 0$,
			\vskip 2mm
			\item [{\rm (ii$'$)}] $\varepsilon_i(\widetilde{e}_{il}b)=\varepsilon_i(b)$, $\varphi_i(\widetilde{e}_{il}b)=\varphi_i(b)+la_{ii}$ if $\widetilde{e}_{il}b\neq 0$.
		\end{itemize}
		
		\vskip 2mm
		
		\item[{\rm (f)}]  For any $(i,l)\in I^\infty$ and $b\in B$ such that $\varphi_i(b)=-\infty$, we have $\widetilde{e}_{il}b=\widetilde{f}_{il}b=0$.
	\end{enumerate}
\end{definition}

	\vskip 2mm

\begin{definition} \hfill
	
	\vskip 2mm
	
	\begin{enumerate}
		\item[{\rm (a)}] A {\it crystal morphism} $\psi$ between two abstract crystals $ B_1$ and $ B_2$ is a map from $ B_1$ to
		$ B_2\sqcup\{0\}$ satisfying the following conditions:
		
		\vskip 2mm
		
		\begin{enumerate}
			\item[{\rm (i)}] for $b\in  B_1$ and $i\in I$, we have $\text{wt}(\psi(b))=\text{wt}(b)$,  $\varepsilon_i(\psi(b))=\varepsilon_i(b)$, $\varphi_i(\psi(b))=\varphi_i(b)$,
			\item[{\rm (ii)}] for $b\in B_1$ and $(i,l)\in I^\infty$ satisfying $\widetilde{f}_{il}b\in B_1$, we have $\psi(\widetilde{f}_{il}b)=\widetilde{f}_{il}\psi(b)$.
		\end{enumerate}	
		
		\vskip 2mm
		
		\item[{\rm (b)}] A crystal morphism $\psi: B_1\to  B_2$ is called {\it strict} if
		\begin{equation*}
			\psi(\widetilde{e}_{il} b) = \widetilde{e}_{il}(\psi(b)), \quad \psi(\widetilde{f}_{il} b) = \widetilde{f}_{il}(\psi(b))	
		\end{equation*}	
		for all $(i,l)\in I^\infty$ and $b \in  B_1$.	
	\end{enumerate}	
\end{definition}

\section{Lower global bases}
\label{sec:lgb}
Let $\A=\Z[q,q^{-1}]$, $\A_{\Q} = \Q[q, q^{-1}]$ and $\A_{\infty}$
be the subring of $\Q(q)$ consisting of rational functions which are regular at $q=\infty$.

\vskip3mm

\begin{definition} \label{def:balanced triple}
	
	Let $V$ be a  $\Q(q)$-vector space.
	Let
	$V_{\Q}$, $L_{0}$ and $L_{\infty}$ be an $\A_{\Q}$-lattice, $\A_{0}$-lattice and
	$\A_{\infty}$-lattice, respectively.
	We say that $(V_{\Q}, L_{0}, L_{\infty})$ is a
	{\it balanced triple} for $V$ if the following conditions hold:
	
	\vskip 2mm
	
	\begin{enumerate}
		
		\item[(a)] The $\Q$-vector space $V_{\Q}\cap L_0\cap L_\infty$ is a free $\Q$-lattice of the
		$\A_0$-module $ L_0$.
		
		\vskip 2mm
		
		\item[(b)] The $\Q$-vector space $V_{\Q}\cap L_0\cap L_\infty$ is a free $\Q$-lattice of the
		$\A_\infty$-module $ L_\infty$.
		\vskip 2mm
		
		\item [(c)]The $\Q$-vector space $V_{\Q}\cap L_0\cap L_\infty$ is a free $\Q$-lattice of the
		$\A_{\Q}$-module $V_{\Q}$.
	\end{enumerate}
\end{definition}

\vskip 3mm

\begin{theorem}\cite{HK02, Kashi91} \label{thm:global basis}  
	{\rm
		The following statements are equivalent.
		
		\vskip 2mm
		
		\begin{enumerate}
			\item[{\rm (a)}] $(V_{\Q}, L_0, L_\infty)$ is  a balanced triple.
			
			\vskip 2mm
			
			\item[{\rm (b)}] The canonical map $V_{\Q}\cap L_0\cap L_\infty\to L_0/q L_0$ is an isomorphism.
			
			\vskip 2mm
			
			\item[{\rm (c)}] The canonical map $V_{\Q}\cap L_0\cap L_\infty\to L_\infty/q L_\infty$ is an isomorphism.
			
		\end{enumerate}
		
	}
	
\end{theorem}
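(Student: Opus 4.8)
\smallskip

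\noindent\textbf{Proof proposal.} The plan is to read $(V_{\Q},L_{0},L_{\infty})$ as gluing data for a vector bundle on $\mathbb{P}^{1}$ and to recognize each of the three conditions as saying that this bundle is trivial. Here $\A_{\Q}=\Q[q,q^{-1}]$ is the ring of regular functions on $\mathbb{G}_{m}=\mathbb{P}^{1}\setminus\{0,\infty\}$, while $\A_{0}$ and $\A_{\infty}$ are the local rings of $\mathbb{P}^{1}$ at $0$ and $\infty$ --- discrete valuation rings with uniformizers $q$ and $q^{-1}$. Since $V_{\Q}$, $L_{0}$, $L_{\infty}$ are lattices in the single $\Q(q)$-space $V$, they agree at the generic point and so glue (a Beauville--Laszlo-type descent at the two points $0,\infty$ together with the open $\mathbb{G}_{m}$) to a locally free sheaf $\mathcal{E}$ on $\mathbb{P}^{1}$ which is trivialized by $V_{\Q}$ over $\mathbb{G}_{m}$, has stalks $L_{0}$, $L_{\infty}$ at $0$, $\infty$, satisfies $H^{0}(\mathbb{P}^{1},\mathcal{E})=V_{\Q}\cap L_{0}\cap L_{\infty}=:E$, and has fibres $\mathcal{E}|_{0}=L_{0}/qL_{0}$ and $\mathcal{E}|_{\infty}=L_{\infty}/q^{-1}L_{\infty}$. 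I would assume throughout that $n:=\dim_{\Q(q)}V<\infty$, which is the only case needed (all weight spaces of $U_{q}^{-}(\g)$ and of $V(\lambda)$ are finite dimensional); the general statement then follows by working one weight at a time.

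\smallskip

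\noindent The implications $(a)\Rightarrow(b)$ and $(a)\Rightarrow(c)$ are pure base change: if $E$ is a free $\Q$-lattice of $L_{0}$ over $\A_{0}$ then $L_{0}=\A_{0}\otimes_{\Q}E$, and applying $\A_{0}/q\A_{0}\cong\Q$ gives $L_{0}/qL_{0}\cong E$ with the canonical map as the isomorphism; the $\A_{\infty}$ case is identical. For $(b)\Rightarrow(a)$ I would fix a $\Q$-basis $e_{1},\dots,e_{m}$ of $E$ and first show it is an $\A_{0}$-basis of $L_{0}$: surjectivity of $E\to L_{0}/qL_{0}$ gives $L_{0}=\sum_{j}\A_{0}e_{j}+qL_{0}$, so Nakayama over the local ring $\A_{0}$ (using that $L_{0}$ is finitely generated, i.e. $n<\infty$) yields $L_{0}=\sum_{j}\A_{0}e_{j}$; injectivity of $E\to L_{0}/qL_{0}$ together with $\dim_{\Q}L_{0}/qL_{0}=n$ forces $m\le n$, hence $m=n$, so the $e_{j}$ form an $\A_{0}$-basis of $L_{0}$ and a $\Q(q)$-basis of $V$, and in particular $\dim_{\Q}E=n$.

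\smallskip

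\noindent It then remains to upgrade the inclusions $\sum_{j}\A_{\infty}e_{j}\subseteq L_{\infty}$ and $\sum_{j}\A_{\Q}e_{j}\subseteq V_{\Q}$ (valid because $E$ lies in $L_{\infty}$ and in $V_{\Q}$, which are stable under $\A_{\infty}$ and $\A_{\Q}$) to equalities; in the sheaf picture this is exactly the statement $\mathcal{E}\cong\mathcal{O}_{\mathbb{P}^{1}}^{\oplus n}$. I would deduce it from Grothendieck's splitting $\mathcal{E}\cong\bigoplus_{j=1}^{n}\mathcal{O}_{\mathbb{P}^{1}}(d_{j})$: surjectivity of $E\to\mathcal{E}|_{0}$ means $\mathcal{E}$ is globally generated at $0$, forcing all $d_{j}\ge 0$; then $n=\dim_{\Q}E=\sum_{j}(d_{j}+1)=n+\sum_{j}d_{j}$ forces $\sum_{j}d_{j}=0$, hence all $d_{j}=0$ and $\mathcal{E}$ is trivial. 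Once $\mathcal{E}$ is trivial the $e_{j}$ trivialize it, and are thus simultaneously $\A_{0}$-, $\A_{\infty}$- and $\A_{\Q}$-bases of $L_{0}$, $L_{\infty}$, $V_{\Q}$, which is condition (a). (Equivalently, and this is essentially what is done in \cite{HK02, Kashi91}, one can argue by elementary manipulations with the two lattices $L_{0}$, $L_{\infty}$ inside $V$, which amount to a hands-on form of Grothendieck's theorem.) Finally $(c)\Rightarrow(a)$ follows from $(b)\Rightarrow(a)$ via the symmetry $q\leftrightarrow q^{-1}$, which swaps $(\A_{0},L_{0})$ with $(\A_{\infty},L_{\infty})$ and fixes $\A_{\Q}$ and $E$.

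\smallskip

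\noindent The delicate point --- the main obstacle --- is precisely the part of $(b)\Rightarrow(a)$ beyond the Nakayama step. Condition $(b)$ only records information at $q=0$, whereas $(a)$ also controls $L_{\infty}$ and, crucially, $V_{\Q}$; the obstruction module $V_{\Q}/\sum_{j}\A_{\Q}e_{j}$ is $\A_{\Q}$-torsion supported on $\mathbb{G}_{m}$, where neither $L_{0}$ nor $L_{\infty}$ imposes any constraint, so it can only be annihilated by a genuinely global argument (Grothendieck's theorem, or an explicit Birkhoff factorization, or Kashiwara's original lattice bookkeeping). Checking that the finiteness hypotheses needed for Nakayama and for the normal form really hold in the intended applications is the only remaining point that requires attention.
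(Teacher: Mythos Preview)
The paper does not supply its own proof of this theorem; it is cited from \cite{HK02, Kashi91}. Your argument is correct under the finite-rank hypothesis you impose, which is all that is needed here since the balanced-triple condition is checked weight space by weight space. (As you implicitly correct, statement~(c) should involve $L_{\infty}/q^{-1}L_{\infty}$ rather than $L_{\infty}/qL_{\infty}$, since $q^{-1}$ is the uniformizer of $\A_{\infty}$.)

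Your route is genuinely different from the one in \cite{Kashi91, HK02}. There the proof is a direct lattice computation: one fixes bases of $L_{0}$ and $L_{\infty}$, writes the change-of-basis matrix in $GL_{n}(\Q(q))$, and performs elementary manipulations (clearing denominators, tracking $q$-adic and $q^{-1}$-adic valuations) to produce a common $\Q$-basis --- effectively an explicit Birkhoff factorization done by hand, exactly as you remark. You instead package the triple as gluing data for a rank-$n$ vector bundle $\mathcal{E}$ on $\mathbb{P}^{1}_{\Q}$, identify $E=V_{\Q}\cap L_{0}\cap L_{\infty}$ with $H^{0}(\mathcal{E})$ and the canonical maps with the fibre evaluations at $0$ and $\infty$, and then use Grothendieck's splitting together with the numerical constraint $h^{0}(\mathcal{E})=n$ to force $\mathcal{E}\cong\mathcal{O}^{\oplus n}$. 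Your approach is more conceptual, makes the $q\leftrightarrow q^{-1}$ symmetry between (b) and (c) transparent, and explains \emph{why} the result holds; the original argument is entirely self-contained and produces the common basis constructively without invoking the classification of bundles on $\mathbb{P}^{1}$. As you yourself observe, the two proofs are the same theorem in different clothing.
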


\vskip 3mm

Let $(V_{\Q}, L_0, L_\infty)$ be a balanced triple and let
\begin{equation*}
	G: L_0/q L_0\longrightarrow V_{\Q}\cap L_0\cap L_\infty
\end{equation*}
be the inverse of the canonical isomorphism
$V_{\Q}\cap L_0\cap L_\infty\stackrel{\sim}{\longrightarrow} L_0/q L_0$.

\vskip 3mm

\begin{proposition} \cite{Kashi91, HK02} 	
	{\rm
		If $B$ is a $\Q$-basis of $ L_0/q L_0$,
		then $\B :=\{G(b)\mid b\in B\}$ is an $\A_{\Q}$-basis of $V_{\Q}$.
	}
\end{proposition}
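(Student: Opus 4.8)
The plan is to reduce the assertion to property (c) in Definition~\ref{def:balanced triple}. Write $E := V_{\Q}\cap L_0\cap L_\infty$ for the common $\Q$-lattice appearing in the balanced triple. By construction, $G\colon L_0/qL_0 \to E$ is the inverse of the canonical $\Q$-linear isomorphism of Theorem~\ref{thm:global basis}(b), so $G$ is itself an isomorphism of $\Q$-vector spaces onto $E$; in particular it is injective, so $\B=\{G(b)\mid b\in B\}$ is genuinely indexed by $B$.

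First I would observe that, since $B$ is a $\Q$-basis of $L_0/qL_0$ and $G$ is a $\Q$-linear isomorphism onto $E$, the set $\B$ is a $\Q$-basis of $E$. This is the only place where the hypothesis on $B$ enters.

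Next I would invoke condition (c) of Definition~\ref{def:balanced triple}: $E$ is a free $\Q$-lattice of the $\A_{\Q}$-module $V_{\Q}$, i.e.\ the multiplication map $\A_{\Q}\otimes_{\Q}E\to V_{\Q}$ is an isomorphism. Consequently any $\Q$-basis of $E$ is automatically an $\A_{\Q}$-basis of $V_{\Q}$: spanning over $\A_{\Q}$ follows from surjectivity of the multiplication map, and $\A_{\Q}$-linear independence follows from its injectivity (a nontrivial $\A_{\Q}$-relation among the $G(b)$, after clearing denominators and reducing modulo a suitable power of $q$ or $q^{-1}$, would yield a nontrivial $\Q$-relation among the $G(b)$ in $E$, contradicting Step~2). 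Applying this to the $\Q$-basis $\B$ of $E$ gives the claim.

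There is no genuine obstacle here; the statement is a formal consequence of Theorem~\ref{thm:global basis} together with Definition~\ref{def:balanced triple}. The only point requiring care is to pin down the precise meaning of ``free $\Q$-lattice of the $\A_{\Q}$-module $V_{\Q}$'' and to check that it literally delivers the implication ``$\Q$-basis of $E$ $\Longrightarrow$ $\A_{\Q}$-basis of $V_{\Q}$'' used in Step~3; once this is made explicit, the argument is immediate.
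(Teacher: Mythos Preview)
Your argument is correct and is exactly the standard one: $G$ carries the $\Q$-basis $B$ to a $\Q$-basis of $E=V_{\Q}\cap L_0\cap L_\infty$, and condition~(c) of Definition~\ref{def:balanced triple} then upgrades any $\Q$-basis of $E$ to an $\A_{\Q}$-basis of $V_{\Q}$. The paper does not give its own proof of this proposition but simply cites \cite{Kashi91, HK02}; your write-up is precisely the argument found there, so there is nothing further to compare.
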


\vskip 3mm

\begin{definition}
	Let $(V_{\Q}, L_0, L_\infty)$ be a balanced triple for a $\Q(q)$-vector space $V$.
	\begin{enumerate}
		\vskip 2mm
		\item[(a)] A $\Q$-basis $B$ of $ L_0/q L_0$ is called a {\it local basis} of $V$ at $q=0$.
		
		\vskip 2mm
		
		\item[(b)] The $\A_{\Q}$-basis $\B = \{G(b) \mid b\in B \}$ is called the {\it lower global basis} of $V$ corresponding to the local basis $B$.
	\end{enumerate}
\end{definition}	

\vskip 3mm
We define $U^{-}_{\Z}(\g)$ (resp. $U^{-}_{\Q}(\g)$) to be the $\A$-subalgebra
(resp. $\A_{\Q}$-subalgebra) of $U_{q}^{-}(\g)$ generated by
$\mathtt{b}_{i}^{(n)}$ $(i \in I^{\text{re}}, n \ge 0)$ and
${\mathtt b}_{il}$ $(i \in I^{\text{im}}, l>0)$.

\vskip 3mm

Let $V(\lambda) = U_{q}(\g) v_{\lambda}$ be the irreducible highest weight module
with highest weight $\lambda \in P^{+}$.
We  define
$V(\lambda)_{\Z} = U_{\Z}^{-}(\g) \, v_{\lambda}$
and $V(\lambda)_{\Q} = U_{\Q}^{-}(\g) \, v_{\lambda}$.

\begin{theorem}\cite{FHKK}\label{thm:balanced triple}
	{\rm
		There exist $\Q$-linear canonical isomorphisms
		
		\vskip 2mm
		
		\begin{enumerate}
			\item[{\rm (a)}] $U^-_{\Q}(\g)\cap L(\infty)\cap\overline{L(\infty)}\stackrel{\sim}{\longrightarrow} L(\infty)/qL(\infty)$,
			where $^{-} : U_{q}(\g) \rightarrow U_{q}(\g)$ is the $\Q$-linear bar involution defined by \eqref{eq:bar},
			
			\vskip 2mm
			
			\item[{\rm (b)}] ${V(\lambda)_{\Q}}\cap L(\lambda)\cap\overline{L(\lambda)}\stackrel{\sim}{\longrightarrow} L(\lambda)/qL(\lambda)$,
			where $^{-}$ is the $\Q$-linear automorphism on $V(\lambda)$ defined by
			$$
			P\, v_\lambda\mapsto \overline{P} \, v_\lambda \ \ \text{for}\ P\in U_q^-(\g).
			$$
		\end{enumerate}
		
	}
\end{theorem}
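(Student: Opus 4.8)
The plan is to establish Theorem \ref{thm:balanced triple} by verifying the hypotheses of Theorem \ref{thm:global basis}, i.e.\ by showing that the triples $(U^-_{\Q}(\g), L(\infty), \overline{L(\infty)})$ and $(V(\lambda)_{\Q}, L(\lambda), \overline{L(\lambda)})$ are balanced. By Theorem \ref{thm:global basis}, it suffices to prove that the canonical map $U^-_{\Q}(\g)\cap L(\infty)\cap\overline{L(\infty)}\to L(\infty)/qL(\infty)$ is an isomorphism, and similarly for $V(\lambda)$. The standard strategy (following \cite{Kashi91, HK02}) is an induction on weight: one builds, for each $b\in B(\infty)$, an element $G(b)\in U^-_{\Q}(\g)$ that is bar-invariant and satisfies $G(b)\equiv b \bmod qL(\infty)$, and one shows these elements span $L(\infty)$ over $\A_\infty$ as well. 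The key technical input is a triangularity argument: the bar involution acts on $L(\infty)/\overline{L(\infty)}$ type data unipotently with respect to the partial order coming from the string decompositions, so one can solve the relevant linear system over $\A_\Q$ uniquely.

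First I would set up the induction. At the lowest weight the statements are trivial. For the inductive step, fix $\alpha\in R_+$ and assume the isomorphisms hold in all weights $-\beta$ with $\beta<\alpha$ (equivalently $|\beta|<|\alpha|$, using the root lattice order). For $U^-_q(\g)$ one uses the crystal basis $(L(\infty),B(\infty))$ of Theorem \ref{thm:crystal basis of U}: every $b\in B(\infty)_{-\alpha}$ is of the form $\widetilde f_{il}b'$ for suitable $(i,l)$ and $b'\in B(\infty)$ of strictly smaller weight, and one defines $G(b)$ by applying a suitable divided-power-type operator — the Kashiwara operator's algebraic incarnation $\widetilde f_{il}$ lifted to $U^-_q(\g)$ — to the already-constructed $G(b')$, then correcting by lower-order bar-invariant terms. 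Concretely, using the operators $e'_{il}, e''_{il}$ and the commutation relations displayed before \eqref{eq:real string U}, one shows that $\mathtt b_{il}^{(n)} G(b')$ (or the analogous imaginary-index expression) lies in $L(\infty)\cap\overline{L(\infty)}\cap U^-_\Q(\g)$ modulo the span of $G(b'')$ with $\mathrm{wt}(b'')$ lower in the string order, and one subtracts off those contributions. Bar-invariance is preserved because $\overline{\mathtt b_{il}}=\mathtt b_{il}$ by \eqref{eq:bar} and the divided powers are fixed by the bar involution.

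For $V(\lambda)$ the argument is parallel but one must handle the fact that $V(\lambda)$ is a quotient of $U^-_q(\g)$: I would use the canonical surjection $\pi_\lambda\colon U^-_q(\g)\to V(\lambda)$, $P\mapsto Pv_\lambda$, which is compatible with the bar involutions (by the definition of $^-$ on $V(\lambda)$ in part (b) of the statement) and sends $L(\infty)$ onto $L(\lambda)$, $B(\infty)$ onto $B(\lambda)\cup\{0\}$ by Theorem \ref{thm:crystal basis of V}. One then defines $G^\lambda(b):=\pi_\lambda(G(\tilde b))$ for any lift $\tilde b\in B(\infty)$ of $b\in B(\lambda)$; the point is that this is well defined modulo $qL(\lambda)$ and remains bar-invariant, and that the kernel of $\pi_\lambda$ is compatible with the triple structure (it is $\sum_{(i,l)}U^-_q(\g)\cdot(\text{elements killing }v_\lambda)$, which intersects $L(\infty)$ in an $\A_0$-direct summand spanned by a subset of $B(\infty)$). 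Passing the balanced-triple property through this quotient then gives (b) from (a).

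The main obstacle is the triangularity/unipotence of the bar involution in the Borcherds--Bozec setting, where the imaginary simple roots with $a_{ii}\le 0$ and, especially, the isotropic ones ($a_{ii}=0$) introduce families of operators $\mathtt b_{il}$ indexed by compositions/partitions rather than a single $\mathtt b_i$. One has to check that the analogue of Kashiwara's lemma — that $e'_{il}$ acting on $L(\infty)$ is ``almost nilpotent'' and that the bar involution differs from the identity by an operator strictly lowering a suitable filtration — still holds when the string decompositions \eqref{eq:real string U}, \eqref{eq:imaginary string U} are governed by $\mathcal C_i$. This is where the non-degeneracy of Kashiwara's bilinear form and the commutation relations $e'_{il}\mathtt b_{jk}=\delta_{ij}\delta_{kl}+q_i^{-kla_{ij}}\mathtt b_{jk}e'_{il}$ (and its $e''$-counterpart) do the real work: they let one control how $\overline{G(b')}-G(b')$ expands in the $\mathtt b_{i,\mathbf c}$-basis and confirm that the correction terms live in strictly lower weight spaces where the inductive hypothesis applies. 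Once that triangularity is in hand, the uniqueness and existence of $G$ follow from the usual argument that a bar-invariant element congruent to $0$ mod $qL$ and lying in $L\cap\overline L\cap V_\Q$ must vanish.
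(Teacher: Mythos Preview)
The paper does not prove Theorem~\ref{thm:balanced triple}; it is quoted from \cite{FHKK} and stated without proof. So there is no argument in this paper to compare against.

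That said, your outline follows the standard Kashiwara strategy and is the natural approach, but as written it is only a sketch, not a proof. A few concrete points where the argument as stated would not go through without further work: (i) you propose to define $G^\lambda(b):=\pi_\lambda(G(\tilde b))$ for any lift $\tilde b$, but well-definedness is exactly the issue---different lifts of $b\in B(\lambda)$ differ by elements of $B(\infty)$ mapping to $0$, and you need that $\pi_\lambda$ sends those $G(\tilde b)$ to $0$ in $V(\lambda)$, not just to $qL(\lambda)$; this requires knowing that the kernel of $\pi_\lambda$ is spanned as an $\A_\Q$-module by a subset of $\B(\infty)$, which is part of what one is trying to prove. (ii) For $i\in I^{\text{im}}$ your inductive construction ``apply a divided-power-type operator to $G(b')$ and correct'' must be made precise using the $\mathtt b_{i,\mathbf c}$ basis and the composition/partition combinatorics of $\mathcal C_i$; the real content of \cite{FHKK} is exactly verifying that the triangularity survives in this setting, and you have only asserted that it does. (iii) The claim that $\overline{L(\infty)}$ is an $\A_\infty$-lattice and that the triple satisfies Definition~\ref{def:balanced triple} is not addressed; in Kashiwara's original argument this is a separate lemma.
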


\vskip 2mm

Thus we obtain:

\vskip 2mm

\begin{proposition}
	
	{\rm
		Let $G$ denote the inverse of the above isomorphisms.
		
		\begin{enumerate}
			
			\vskip 2mm
			
			\item[(a)] $\B(\infty) : = \{ G(b) \mid b \in B(\infty) \}$ is a
			lower global basis of $U_{\Q}^{-}(\g)$.
			
			\vskip 2mm
			
			\item[(b)] $\B(\lambda) : = \{ G(b) \mid b \in B(\lambda) \}$ is a
			lower global basis of $V(\lambda)_{\Q}$.

		\end{enumerate}
		
	}
	
\end{proposition}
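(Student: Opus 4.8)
The plan is to deduce this Proposition directly from Theorem~\ref{thm:balanced triple} together with the general global basis machinery recorded in Theorem~\ref{thm:global basis} and the Proposition immediately following it. The essential observation is that Theorem~\ref{thm:balanced triple} is exactly the statement that the canonical maps
\[
U^-_{\Q}(\g)\cap L(\infty)\cap\overline{L(\infty)}\longrightarrow L(\infty)/qL(\infty),\qquad
V(\lambda)_{\Q}\cap L(\lambda)\cap\overline{L(\lambda)}\longrightarrow L(\lambda)/qL(\lambda)
\]
are isomorphisms. By the equivalence (a)$\Leftrightarrow$(b) of Theorem~\ref{thm:global basis}, applied with $V=U^-_q(\g)$, $L_0=L(\infty)$, $L_\infty=\overline{L(\infty)}$, $V_{\Q}=U^-_{\Q}(\g)$ (and likewise for $V(\lambda)$), this is equivalent to $(U^-_{\Q}(\g),L(\infty),\overline{L(\infty)})$ being a balanced triple for $U^-_q(\g)$, and $(V(\lambda)_{\Q},L(\lambda),\overline{L(\lambda)})$ being a balanced triple for $V(\lambda)$. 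So the first step is simply to invoke these two results to conclude that we are in the balanced-triple situation, with $G$ the inverse isomorphism $L_0/qL_0\xrightarrow{\sim} V_{\Q}\cap L_0\cap L_\infty$ as in the definition preceding the Proposition.

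The second step is to verify that $B(\infty)$ (resp. $B(\lambda)$) is a $\Q$-basis of $L(\infty)/qL(\infty)$ (resp. $L(\lambda)/qL(\lambda)$); but this is precisely part (b) of the definition of a crystal basis, which holds by Theorem~\ref{thm:crystal basis of U} (resp. Theorem~\ref{thm:crystal basis of V}). Hence $B(\infty)$ and $B(\lambda)$ are local bases at $q=0$ in the sense of the definition of local basis. Applying the Proposition following Theorem~\ref{thm:global basis}, we conclude that $\B(\infty)=\{G(b)\mid b\in B(\infty)\}$ is an $\A_{\Q}$-basis of $U^-_{\Q}(\g)$ and $\B(\lambda)=\{G(b)\mid b\in B(\lambda)\}$ is an $\A_{\Q}$-basis of $V(\lambda)_{\Q}$. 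By the definition of the lower global basis corresponding to a local basis, these are exactly the asserted lower global bases, proving (a) and (b).

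In effect the proof is a bookkeeping argument: it chains together Theorem~\ref{thm:balanced triple} (which supplies the balanced triples), Theorem~\ref{thm:global basis} (which translates this into the existence of the crystal map $G$), Theorems~\ref{thm:crystal basis of U} and~\ref{thm:crystal basis of V} (which guarantee $B(\infty)$, $B(\lambda)$ are $\Q$-bases of the relevant quotients), and the Proposition on pushing a local basis up to a global basis. There is no genuine obstacle at this stage, since all the analytic and combinatorial content has been absorbed into Theorem~\ref{thm:balanced triple}; the only point that requires a line of care is matching the hypotheses of Theorem~\ref{thm:global basis}, namely confirming that $\overline{L(\infty)}$ is indeed an $\A_\infty$-lattice and $U^-_{\Q}(\g)$ an $\A_{\Q}$-lattice in the respective spaces (and similarly for $V(\lambda)$), which is immediate from the fact that the bar involution sends $\A_0$ to $\A_\infty$ and fixes $\A_{\Q}$. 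Once this compatibility is noted, the Proposition follows formally.
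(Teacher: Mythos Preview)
Your proposal is correct and matches the paper's approach: the paper gives no explicit proof for this proposition, presenting it with ``Thus we obtain:'' as an immediate consequence of Theorem~\ref{thm:balanced triple} together with the general global-basis proposition following Theorem~\ref{thm:global basis}. Your write-up simply unpacks that deduction, which is exactly what is intended.
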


\vskip 2mm

In the next proposition, we will describe the action of Kashiwara
operators on lower global basis.

\vskip 3mm

\begin{proposition}\label{Eil and bil}
	{\rm
		Let $i\in I^{\text{\rm im}}$, $l>0$ and let
		$u=\sum_{\mathbf c\in\mathcal C_i}\mathtt b_{i,\mathbf c}u_{\mathbf c}\in L(\lambda)$
		be the $i$-string decomposition of $u$.
		Set
		$b=u+qL(\lambda)$ and
		$b_{\mathbf c}=\mathtt b_{i,\mathbf c}u_{\mathbf c}+qL(\lambda)$.
		
		\vskip 2mm
		
		Then we have
		
\vskip 2mm		
		\begin{enumerate}
			\item[{\rm (a)}]
			$E_{il}G(b)=G(\widetilde{e}_{il}b)$.
			
			\vskip 2mm
			
			\item[{\rm (b)}]
			$
			\mathtt b_{il}G(b)=
			\begin{cases}
				G(\widetilde{f}_{il}b) &\text{\rm if}\ i\notin I^{\text{\rm iso}},\\
				\sum_{\mathbf c\in\mathcal C_i}(\mathbf c_l+1)G(\widetilde{f}_{il}b_{\mathbf c})
				&\text{\rm if}\ i\in I^{\text{\rm iso}}.
			\end{cases}
			$
		\end{enumerate}
		
	}
\end{proposition}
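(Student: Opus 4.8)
The plan is to reduce both statements to the corresponding properties of the crystal basis $(L(\lambda), B(\lambda))$ together with the characterization of the lower global basis as the fixed points of the bar involution inside $L(\lambda)\cap\overline{L(\lambda)}$. The two assertions are of genuinely different flavor: (a) is an exact operator identity, while (b) asks for a precise description of $\mathtt b_{il}G(b)$ in the global basis, and in the isotropic case this is a nontrivial sum. I would treat (a) first and then leverage it for (b).

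For part (a), recall from Lemma~\ref{euEuqL} that $\widetilde e_{il} u \equiv E_{il} u \bmod qL(\lambda)$, so $E_{il}G(b) \equiv \widetilde e_{il} b \bmod qL(\lambda)$; thus $G(\widetilde e_{il}b)$ and $E_{il}G(b)$ have the same image in $L(\lambda)/qL(\lambda)$. The key point is that $E_{il}G(b)$ must actually lie in $V(\lambda)_{\Q}\cap L(\lambda)\cap\overline{L(\lambda)}$, i.e. it is bar-invariant and integral. Bar-invariance: one checks that $E_{il} = -K_i^l\mathtt a_{il}$ is compatible with the bar involution on $V(\lambda)$ up to the expected normalization — here I would use that $\overline{\mathtt a_{il}}=\mathtt a_{il}$ and $\overline{K_i}=K_i^{-1}$ from \eqref{eq:bar}, combined with the fact (to be verified, or imported from \cite{FHKK}) that $E_{il}$ commutes with the bar operator on $V(\lambda)$, so $\overline{E_{il}G(b)} = E_{il}\overline{G(b)} = E_{il}G(b)$. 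Integrality and the lattice condition $E_{il}L(\lambda)\subset L(\lambda)$ follow because $E_{il}\equiv\widetilde e_{il}$ preserves $L(\lambda)$ and, for the $\A_{\infty}$-lattice $\overline{L(\lambda)}$, by applying the bar involution to the previous statement. Since $E_{il}G(b)$ is then an element of the balanced intersection with the same residue as $G(\widetilde e_{il}b)$, and $G$ is the inverse of the isomorphism onto $L(\lambda)/qL(\lambda)$, the two coincide: $E_{il}G(b)=G(\widetilde e_{il}b)$. (When $\widetilde e_{il}b=0$ this reads $E_{il}G(b)=0$, which is consistent since then $E_{il}G(b)\in qL(\lambda)$ is bar-invariant, hence $0$.)

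For part (b), I would first handle the non-isotropic case $i\notin I^{\text{iso}}$. By Definition~\ref{def:Kashiwara operator}(b) and the relation $\widetilde e_{il}\circ\widetilde f_{il}=\mathrm{id}$ on weight spaces with $\langle h_i,\mu\rangle>0$, the element $\widetilde f_{il}b$ is the unique crystal basis element with $\widetilde e_{il}(\widetilde f_{il}b)=b$; combining this with part (a) applied to $\widetilde f_{il}b$ gives $E_{il}G(\widetilde f_{il}b)=G(b)$. Now one shows $\mathtt b_{il}G(b)\equiv \widetilde f_{il}b\bmod qL(\lambda)$ (this is the defining near-triangularity of $\widetilde f_{il}$ against $\mathtt b_{il}$, or can be extracted by pairing with $E_{il}$ and using the commutation relation $E_{il}\mathtt b_{il}-q_i^{\ast}\mathtt b_{il}E_{il}=$ scalar) and that $\mathtt b_{il}G(b)$ is bar-invariant (immediate from $\overline{\mathtt b_{il}}=\mathtt b_{il}$) and lies in $L(\lambda)\cap\overline{L(\lambda)}$ (from the crystal lattice condition $\widetilde f_{il}L(\lambda)\subset L(\lambda)$ and its bar image). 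Hence $\mathtt b_{il}G(b)=G(\widetilde f_{il}b)$ by uniqueness of the balanced representative. For the isotropic case, the subtlety is that $\widetilde f_{il}$ carries the normalizing factor $\tfrac{1}{\mathbf c_l+1}$, so $\mathtt b_{il}$ does not act as a single Kashiwara operator; instead one decomposes $u=\sum_{\mathbf c}\mathtt b_{i,\mathbf c}u_{\mathbf c}$, applies $\mathtt b_{il}$ directly to get $\mathtt b_{il}u=\sum_{\mathbf c}\mathtt b_{i,\{l\}\cup\mathbf c}u_{\mathbf c}=\sum_{\mathbf c}(\mathbf c_l+1)\,\widetilde f_{il}b_{\mathbf c}$ modulo $qL(\lambda)$, and then the same bar-invariance-plus-lattice argument (now applied termwise, using part (a) and the non-isotropic-style uniqueness for each $b_{\mathbf c}$) identifies each $\widetilde f_{il}b_{\mathbf c}$ term with $G(\widetilde f_{il}b_{\mathbf c})$, whence $\mathtt b_{il}G(b)=\sum_{\mathbf c}(\mathbf c_l+1)G(\widetilde f_{il}b_{\mathbf c})$.

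The main obstacle I anticipate is establishing the bar-compatibility of $E_{il}$ on $V(\lambda)$ cleanly — that is, showing $E_{il}$ (rather than merely $\mathtt a_{il}$) commutes with the bar operator defined by $Pv_\lambda\mapsto\overline P v_\lambda$. Because the bar involution on $U_q(\g)$ sends $K_i^l$ to $K_i^{-l}$ while fixing $\mathtt a_{il}$, the operator $E_{il}=-K_i^l\mathtt a_{il}$ is not literally bar-invariant as an element of $U_q(\g)$; one must use that on the \emph{weight space} $V(\lambda)_\mu$ the factor $K_i^l$ acts by the scalar $q_i^{l\langle h_i,\mu\rangle}$ and that the bar operator on $V(\lambda)$ is only $\Q$-linear (inverting $q$), so the two sign/power discrepancies cancel. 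I would isolate this as a short lemma (or cite the relevant computation from \cite{FHKK}) before running the argument above; once it is in place, everything else is the standard "bar-invariant + integral + correct residue $\Rightarrow$ equals $G(\cdot)$" mechanism.
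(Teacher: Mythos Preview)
Your proposal is correct and follows essentially the same route as the paper: both show the two sides agree modulo $qL(\lambda)$ (via Lemma~\ref{euEuqL} for (a) and the explicit formula for $\widetilde f_{il}$ in Definition~\ref{def:Kashiwara operator} for (b), reducing the isotropic case to single $i$-string terms $\mathtt b_{i,\mathbf c}u_{\mathbf c}$) and then conclude equality by uniqueness of the balanced-triple lift $G$. You are in fact more thorough than the paper's terse argument, which passes directly from ``same residue'' to ``equal'' without explicitly checking that $E_{il}G(b)$ and $\mathtt b_{il}G(b)$ lie in $V(\lambda)_{\Q}\cap L(\lambda)\cap\overline{L(\lambda)}$; the bar-compatibility of $E_{il}$ that you flag as the main obstacle is exactly the point the paper leaves implicit.
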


\begin{proof}
	
	(a) By Lemma \ref{euEuqL}, we have
	$$E_{il}u\equiv \widetilde{e}_{il}u\!\!\!\! \mod{qL(\lambda)},$$
	which implies that
	$$E_{il}G(b)\equiv \widetilde{e}_{il}b\!\!\!\!  \mod{qL(\lambda)}.$$
	
	\vskip 2mm
	
	On the other hand, by the definition of $\B(\lambda)$, we have
	
	$$G(\widetilde{e}_{il}b)\equiv\widetilde{e}_{il}b\!\!\!\!  \mod{qL(\lambda)}.$$
	Hence $E_{il}G(b)=G(\widetilde{e}_{il}b)$.
	
	\vskip 2mm
	
	(b) If $i\notin I^{\text{iso}}$, by Definition \ref{def:Kashiwara operator}, we have
	$\widetilde{f}_{il}u=\mathtt b_{il}u$.
	It follows that
	$$\mathtt b_{il}G(b)\equiv \widetilde{f}_{il}b\!\!\!\!  \mod{qL(\lambda)}.$$
	
	\vskip 2mm
	
	Likewise, by the definition of $\B(\lambda)$, we have
	$$G(\widetilde{f}_{il}b)\equiv\widetilde{f}_{il}b\!\!\!\!  \mod{qL(\lambda)}.$$
	Hence $\mathtt b_{il}G(b)=G(\widetilde{f}_{il}b)$.
	
	\vskip 2mm
	
	If $i\in I^{\text{iso}}$, we first assume that
	$$
	u=\mathtt b_{i,\mathbf c}u_{\mathbf c} \ \text{and}\ E_{ik}u_{\mathbf c}=0\ \text{for all} \ k>0.
	$$
	By Definition \ref{def:Kashiwara operator}, we have
	$$
	\widetilde{f}_{il}u=\frac{1}{\mathbf c_l+1}\mathtt b_{il}(\mathtt b_{i,\mathbf c}u_{\mathbf c}).
	$$
	Hence $\mathtt b_{il}G(b)=\mathtt b_{il}u=(\mathbf c_{l}+1)\widetilde{f}_{il}u$.
	Since
	$\widetilde{f}_{il}b=\widetilde{f}_{il}u+qL(\lambda)$, we obtain $G(\widetilde{f}_{il}b)=\widetilde{f}_{il}u$. It follows that
	$$
	\mathtt b_{il}G(b)=(\mathbf c_l+1)G(\widetilde{f}_{il}b).
	$$
	Thus we have
	$$
	\mathtt b_{il}G(b_{\mathbf c})=(\mathbf c_{l}+1)G(\widetilde{f}_{il}b_{\mathbf c}).
	$$
	
	\vskip 2mm
	
	Therefore $\mathtt b_{il}G(b)=\sum_{\mathbf c\in\mathcal C_i}(\mathbf c_l+1)G(\widetilde{f}_{il}b_{\mathbf c})$.
\end{proof}

\vskip 3mm

Similarly, we have
\begin{corollary}\label{cor:bilGb}
	
	{\rm
		Let $i\in I^{\text{\rm im}}$, $l>0$ and
		let $u=\sum_{\mathbf c\in\mathcal C_i}\mathtt b_{i,\mathbf c}u_{\mathbf c}\in L(\infty)$
		be the $i$-string decomposition of $u$.
		Set
		$b=u+qL(\infty)$ and $b_{\mathbf c}=\mathtt b_{i,\mathbf c}u_{\mathbf c}+qL(\infty)$.
		
		\vskip 2mm
		
		Then we have
		\begin{enumerate}
			
			\vskip 2mm
			
			\item[{\rm (a)}]
			$E_{il}G(b)=G(\widetilde{e}_{il}b)$,
			
			\vskip 2mm
			
			\item[{\rm (b)}]
			$
			\mathtt b_{il}G(b)=
			\begin{cases}
				G(\widetilde{f}_{il}b) &\text{if}\ i\notin I^{\text{iso}},\\
				\sum_{\mathbf c\in\mathcal C_i}(\mathbf c_l+1)G(\widetilde{f}_{il} b_{\mathbf c}) &\text{if}\ i\in I^{\text{iso}}.
			\end{cases}
			$
		\end{enumerate}
		
	}
\end{corollary}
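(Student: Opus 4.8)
The plan is to mirror the proof of Proposition~\ref{Eil and bil} almost verbatim, replacing $V(\lambda)$ and $L(\lambda)$ by $U_q^-(\g)$ and $L(\infty)$ throughout, and invoking the corresponding ingredients for $U_q^-(\g)$ in place of the ones used for $V(\lambda)$. Specifically, Lemma~\ref{euEuqL} and the construction of $\mathbf B(\infty)$ via the balanced triple of Theorem~\ref{thm:balanced triple}(a) play exactly the roles that Lemma~\ref{euEuqL} applied to $V(\lambda)$ and the construction of $\mathbf B(\lambda)$ played in the original proof.

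First I would prove (a). By Lemma~\ref{euEuqL} applied to the module $M = U_q^-(\g)$ (viewed in the appropriate sense, or directly by the adjoint action description of $e_{il}''$ versus $\widetilde e_{il}$ on $U_q^-(\g)$), we have $E_{il}u \equiv \widetilde e_{il}u \pmod{qL(\infty)}$ for every $u \in U_q^-(\g)_{-\alpha}$; applying this to $u = G(b)$ gives $E_{il}G(b) \equiv \widetilde e_{il}b \pmod{qL(\infty)}$. On the other hand, by the defining property of $G$ as the inverse of the canonical isomorphism $U_{\Q}^-(\g)\cap L(\infty)\cap\overline{L(\infty)} \xrightarrow{\sim} L(\infty)/qL(\infty)$, the element $G(\widetilde e_{il}b)$ is the unique element of $U_{\Q}^-(\g)\cap L(\infty)\cap\overline{L(\infty)}$ congruent to $\widetilde e_{il}b$ modulo $qL(\infty)$. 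Since $E_{il}G(b)$ lies in $U_{\Q}^-(\g)\cap L(\infty)\cap\overline{L(\infty)}$ (here one uses that $E_{il}$ preserves the $\A_{\Q}$-form, the crystal lattice, and commutes with the bar involution up to the relevant normalization, exactly as in the $V(\lambda)$ case), uniqueness forces $E_{il}G(b) = G(\widetilde e_{il}b)$.

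Next I would prove (b). For $i \notin I^{\mathrm{iso}}$, Definition~\ref{def:Kashiwara operators}(b) gives $\widetilde f_{il}u = \mathtt b_{il}u$, so $\mathtt b_{il}G(b) \equiv \widetilde f_{il}b \pmod{qL(\infty)}$; since $G(\widetilde f_{il}b) \equiv \widetilde f_{il}b \pmod{qL(\infty)}$ and $\mathtt b_{il}G(b)$ lies in the balanced $\Q$-lattice, uniqueness again yields $\mathtt b_{il}G(b) = G(\widetilde f_{il}b)$. For $i \in I^{\mathrm{iso}}$, I first reduce to the case $u = \mathtt b_{i,\mathbf c}u_{\mathbf c}$ with $E_{ik}u_{\mathbf c}=0$ for all $k>0$; then Definition~\ref{def:Kashiwara operators}(c) gives $\widetilde f_{il}u = \frac{1}{\mathbf c_l+1}\mathtt b_{il}(\mathtt b_{i,\mathbf c}u_{\mathbf c})$, hence $\mathtt b_{il}G(b_{\mathbf c}) = (\mathbf c_l+1)\widetilde f_{il}u = (\mathbf c_l+1)G(\widetilde f_{il}b_{\mathbf c})$ by the same uniqueness argument, and summing over $\mathbf c \in \mathcal C_i$ gives the stated formula.

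I do not expect a genuine obstacle here: the statement is the $U_q^-(\g)$-analogue of Proposition~\ref{Eil and bil}, and every tool needed — Lemma~\ref{euEuqL}, the balanced triple from Theorem~\ref{thm:balanced triple}(a), the compatibility of $E_{il}$ and $\mathtt b_{il}$ with the $\A_{\Q}$-form and the bar involution, and the explicit Kashiwara operators of Definition~\ref{def:Kashiwara operators} — has an exact counterpart already in the excerpt. The only point requiring a line of care is verifying that $E_{il}G(b)$ (resp.\ $\mathtt b_{il}G(b)$) actually lands in $U_{\Q}^-(\g)\cap L(\infty)\cap\overline{L(\infty)}$, which is where the balancedness of the triple is genuinely used; this is routine given that $E_{il}$ and $\mathtt b_{il}$ preserve each of the three lattices. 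Since the argument is a line-by-line transcription, I would simply write ``The proof is identical to that of Proposition~\ref{Eil and bil}, with $V(\lambda)$ and $L(\lambda)$ replaced by $U_q^-(\g)$ and $L(\infty)$, and Theorem~\ref{thm:balanced triple}(a) used in place of Theorem~\ref{thm:balanced triple}(b).''
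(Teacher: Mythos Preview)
Your proposal is correct and matches the paper's approach exactly: the paper gives no separate proof for this corollary, prefacing it only with ``Similarly, we have'', so the intended argument is precisely the line-by-line transcription of the proof of Proposition~\ref{Eil and bil} with $V(\lambda)$, $L(\lambda)$ replaced by $U_q^-(\g)$, $L(\infty)$, just as you describe.
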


\vskip 2mm

\section{Lower perfect bases} \label{sec:lpb} 

Let $(A, P, P^{\vee}, \Pi, \Pi^{\vee})$ be a Borcherds-Cartan datum.
Let  $V = \bigoplus_{\mu \in P} V_{\mu}$ be a $P$-graded vector space
over a field $\mathbf{k}$ and set
$\text{wt}(V):= \{ \mu \in P \mid V_{\mu} \neq 0 \}$.

\vskip 3mm

\begin{definition} \label{def:weak lower perfect}
	
	{\rm
		Let $\{\mathtt{f}_{il}\}_{(i,l)\in I^{\infty}}$ be a family of endomorphisms of $V$.
		We say that $(V, \{\mathtt{f}_{il}\}_{(i,l)\in I^{\infty}})$ is a {\it weak lower perfect space}
		if
			\vskip 2mm		
		
		\begin{enumerate}
			
			\item[(i)] $\dim V_{\mu} < \infty$ for all $\mu \in P$,
			
			\vskip 2mm
			
			\item[(ii)] there exist finitely many $\lambda_{1}, \ldots, \lambda_{s} \in P$
			such that $\text{wt}(V) \subset \bigcup_{j=1}^{s} (\lambda_{j} -  R_{+})$,
			
			\vskip 2mm
			
			\item[(iii)]
			${\mathtt f}_{il} (V_{\mu}) \subset V_{\mu - l \alpha_{i}} \ \ \text{for all} \
			\mu \in P, \  (i,l) \in I^{\infty}$.
			
		\end{enumerate}
	}
	
\end{definition}

\vskip 3mm

For a non-zero vector $v \in V$, define $d_{il}(v)$ to be the non-negative integer $n$
such that $v \in {\mathtt f}_{il}^n V \setminus {\mathtt f}_{il}^{n+1} V$.

\vskip 3mm

\begin{definition} \label{def:lower perfect basis}
	
	{\rm
		
		Let $(V, \{ {\mathtt f}_{il} \}_{(i,l) \in I^{\infty}})$ be a weak lower perfect space.
		
		\vskip 2mm
		
		\begin{enumerate}
			
			\item[(a)] A basis $\B$ of $V$ is called a {\it lower perfect basis} if
			
			\begin{enumerate}
				
				\item[(i)] $\B = \bigsqcup_{\mu \in P} \B_{\mu}$, where
				$\B_{\mu} = \B \cap V_{\mu}$,
				
				\vskip 2mm
				
				\item[(ii)] For every $(i,l) \in I^{\infty}$, there exists a map
				${\mathbf f}_{il}: \B \rightarrow \B \cup \{0\}$ such that
				for each $b \in \B$, there exists $c = c_{b} \in {\mathbf k}^{\times}$
				satisfying
				\begin{equation*}
					{\mathtt f}_{il}(b) - c \, {\mathbf f}_{il}(b) \in {\mathtt f}_{il}^{d_{il}(b) + 2} V,
				\end{equation*}
				
				\vskip 2mm
				
				\item[(iii)] ${\mathbf f}_{il}(b) = {\mathbf f}_{il}(b')$ for all $(i,l)\in I^{\infty}$ implies $b = b'$.
				
			\end{enumerate}
			
			\vskip 3mm

			\item[(b)] $V$ is called a {\it lower perfect space} if it has a lower perfect basis.
			
		\end{enumerate}

	}
	
\end{definition}

\vskip 3mm

\begin{proposition} \label{prop:exist lps}\hfill
	\vskip 2mm
	{\rm
		
		\begin{enumerate}
			
			\item[(a)] The lower global basis $\B(\infty)$ of $U_{q}^{-}(\g)$ is a lower perfect basis.
			
			\vskip 2mm
			
			\item[(b)] The lower global basis $\B(\lambda)$ of $V(\lambda)$ is a lower perfect basis.
			
		\end{enumerate}
		
	}
	
\end{proposition}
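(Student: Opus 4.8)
I would prove both parts in parallel, taking $\mathtt f_{il}$ to be left multiplication by $\mathtt b_{il}$ on $U_q^-(\g)$ and the action of $\mathtt b_{il}$ on $V(\lambda)$. That $(U_q^-(\g),\{\mathtt f_{il}\})$ and $(V(\lambda),\{\mathtt f_{il}\})$ are weak lower perfect spaces (Definition \ref{def:weak lower perfect}) is immediate: the $P$-graded components are finite-dimensional (for $V(\lambda)$ because $V(\lambda)\in\mathcal O_{\mathrm{int}}$), $\mathrm{wt}(U_q^-(\g))\subset -R_+$ while $\mathrm{wt}(V(\lambda))\subset \lambda-R_+$, and $\mathtt b_{il}$ lowers weight by $l\alpha_i$. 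Next I set $\B=\B(\infty)$, resp.\ $\B(\lambda)$, and define $\mathbf f_{il}\colon \B\to\B\cup\{0\}$ by $\mathbf f_{il}\big(G(b)\big)=G(\widetilde f_{il}b)$ with the convention $G(0)=0$; this is well defined and lands in $\B\cup\{0\}$ because $\widetilde f_{il}b\in B(\infty)\cup\{0\}$, resp.\ $B(\lambda)\cup\{0\}$, by the crystal basis axioms, and condition (i) of Definition \ref{def:lower perfect basis} holds since $G$ respects the weight grading.

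The core is condition (ii). The first step is to read off the $\mathtt f_{il}$-adic filtration from the $i$-string decompositions \eqref{eq:real string}--\eqref{eq:imaginary string} and \eqref{eq:real string U}--\eqref{eq:imaginary string U}: if $v=\sum_{\mathbf c}\mathtt b_{i,\mathbf c}v_{\mathbf c}$ is an $i$-string decomposition, then $v\in\mathtt f_{il}^{\,n}V$ precisely when every $\mathbf c$ with $v_{\mathbf c}\neq 0$ starts with at least $n$ copies of $l$ (for $i\in I^{\mathrm{re}}\cup(I^{\mathrm{im}}\setminus I^{\mathrm{iso}})$), resp.\ has at least $n$ parts equal to $l$ (for $i\in I^{\mathrm{iso}}$); hence $d_{il}(v)$ equals the minimum of these counts over the $\mathbf c$ occurring in $v$. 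Now fix $b$ with $G(b)=\sum_{\mathbf c}\mathtt b_{i,\mathbf c}u_{\mathbf c}$ an $i$-string decomposition and put $b_{\mathbf c}=\mathtt b_{i,\mathbf c}u_{\mathbf c}+qL$. If $i\in I^{\mathrm{im}}\setminus I^{\mathrm{iso}}$, Corollary \ref{cor:bilGb}(b), resp.\ Proposition \ref{Eil and bil}(b), gives $\mathtt b_{il}G(b)=G(\widetilde f_{il}b)$ on the nose, so (ii) holds with $c_b=1$ and zero error. If $i\in I^{\mathrm{re}}$, then $G(b)=\sum_{k\ge 0}\mathtt b_i^{(k)}u_k$ yields $\mathtt b_i G(b)=\sum_{k\ge d}[k+1]_i\,\mathtt b_i^{(k+1)}u_k$ with $d=d_i(G(b))=\min\{k:u_k\neq 0\}$; all summands with $k>d$ lie in $\mathtt f_i^{\,d+2}V$, while the $k=d$ summand equals $c_b\,G(\widetilde f_i b)$ modulo $\mathtt f_i^{\,d+2}V$ for a suitable $c_b\in\Q(q)^\times$ by Kashiwara's rank-one computation. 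If $i\in I^{\mathrm{iso}}$, Corollary \ref{cor:bilGb}(b), resp.\ Proposition \ref{Eil and bil}(b), gives $\mathtt b_{il}G(b)=\sum_{\mathbf c}(\mathbf c_l+1)G(\widetilde f_{il}b_{\mathbf c})=\sum_{\mathbf c}G\big(\mathtt b_{il}\mathtt b_{i,\mathbf c}u_{\mathbf c}+qL\big)$; organizing the sum by the $\mathtt f_{il}$-depth $\mathbf c_l+1$ of each summand, the minimal-depth part (the $\mathbf c$ with $\mathbf c_l=d_{il}(G(b))$) recombines to a scalar multiple of $G(\widetilde f_{il}b)$ and everything else sits in $\mathtt f_{il}^{\,d_{il}(G(b))+2}V$.

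For condition (iii), assume $\mathbf f_{il}(G(b))=\mathbf f_{il}(G(b'))$ for every $(i,l)\in I^\infty$, i.e.\ $\widetilde f_{il}b=\widetilde f_{il}b'$ throughout. If this common value is non-zero for some $(i,l)$, then applying $\widetilde e_{il}$ and invoking the crystal basis property $\widetilde f_{il}b=b''\iff b=\widetilde e_{il}b''$ gives $b=b'$. The remaining case, $\widetilde f_{il}b=\widetilde f_{il}b'=0$ for all $(i,l)$, is vacuous for $B(\infty)$ since there every $\widetilde f_{il}$ is non-zero, and for $B(\lambda)$ it forces $b=b'$ because $\langle h_i,\mathrm{wt}(-)\rangle$ is constant along $\widetilde f_{il}$ and non-decreasing along $\widetilde f_{jk}$ with $j\neq i$, so at most one element of $B(\lambda)$ can be annihilated by all the $\widetilde f_{il}$.

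The step I expect to be the main obstacle is condition (ii) for $i\in I^{\mathrm{iso}}$, together with the bookkeeping for $i\in I^{\mathrm{re}}$: in both cases $\mathtt b_{il}G(b)$ is genuinely a $\Q(q)$-linear combination of several lower global basis elements, and one must verify that after sorting by $\mathtt f_{il}$-depth the top layer collapses to exactly one scalar multiple of $G(\widetilde f_{il}b)$ with the remainder pushed two steps deeper in the filtration -- this is precisely where the partition combinatorics of the isotropic $i$-string, the normalizing coefficients of Definition \ref{def:Kashiwara operator}(c), and the filtration description above have to be matched up carefully.
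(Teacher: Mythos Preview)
Your proposal and the paper's proof diverge at the very first step: you take $\mathtt f_{il}$ to be left multiplication by $\mathtt b_{il}$, whereas the paper sets $\mathtt f_{il}=\widetilde f_{il}$, the lower Kashiwara operator itself. With the paper's choice, $\mathbf f_{il}G(b):=G(\widetilde f_{il}b)$ makes condition~(ii) essentially immediate with $c=1$ (the verification is declared ``easy''), and the entire case analysis you outline for real and isotropic indices disappears. Your choice gives the same filtrations $\mathtt f_{il}^nV=\widetilde f_{il}^nV$, so nothing structural is lost, but it forces you to track the extra coefficients $[k+1]_i$ (real case) and $\mathbf c_l+1$ (isotropic case) and then argue that the top layer collapses to a single scalar multiple of $G(\widetilde f_{il}b)$.

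That last step is where your sketch is thin. For $i\in I^{\mathrm{re}}$ you appeal to ``Kashiwara's rank-one computation'' to conclude that the leading $i$-string term of $G(\widetilde f_i b)$ is $\mathtt b_i^{(d+1)}u_d$; this is true in the Kac--Moody setting but is a nontrivial compatibility between the global basis and the $i$-string filtration that is not stated in the paper and would have to be supplied. For $i\in I^{\mathrm{iso}}$ you assert that the minimal-depth part of $\sum_{\mathbf c}(\mathbf c_l+1)G(\widetilde f_{il}b_{\mathbf c})$ ``recombines to a scalar multiple of $G(\widetilde f_{il}b)$'', but Proposition~\ref{Eil and bil}/Corollary~\ref{cor:bilGb} give you no relation among the various $G(\widetilde f_{il}b_{\mathbf c})$, so this recombination is unjustified as written. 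Switching to the paper's choice $\mathtt f_{il}=\widetilde f_{il}$ sidesteps both issues.

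Your argument for condition~(iii) on $B(\lambda)$ in the residual case $\widetilde f_{il}b=0$ for all $(i,l)$ is also not right: $\langle h_i,\mathrm{wt}(-)\rangle$ is not constant along $\widetilde f_{il}$ when $i\in I^{\mathrm{re}}$ (it drops by $2$), so the monotonicity you invoke does not pin down a unique sink. The paper simply notes for $B(\infty)$ that $\widetilde f_{il}b=\widetilde f_{il}b'$ already forces $b=b'$ (since $\widetilde e_{il}\widetilde f_{il}=\mathrm{id}$ there) and says the $V(\lambda)$ case is analogous; you should adopt that line rather than the weight-inequality argument.
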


\begin{proof} \
	For each $(i,l) \in I^{\infty}$, define
	\begin{equation*}
		\begin{aligned}
			& {\mathtt f}_{il}= {\widetilde f}_{il}: U_{q}^{-}(\g) \rightarrow U_{q}^{-}(\g), \\
			&{\mathbf f}_{il} \, G(b)  = G({\widetilde f}_{il} b) \ \ \text{for} \ b \in B(\infty).
		\end{aligned}
	\end{equation*}
	
	\vskip 2mm
	
	Then it is easy to verify that the conditions (i) and (ii) are satisfied with $c = 1$.
	
	\vskip 2mm
	
	If ${\mathbf f}_{il}(G(b)) = {\mathbf f}_{il}(G(b'))$ for $b, b' \in B(\infty)$,
	then $G({\widetilde f}_{il} b) = G({\widetilde f}_{il} b')$. Since $G$ is a linear isomorphism, we obtain
	${\widetilde f}_{il} b = {\widetilde f}_{il} b'$.
	Hence $b = b'$ and $\B(\infty)$ is a lower perfect basis of $U_{q}^{-}(\g)$.
	
	\vskip 3mm
	
	By the same argument, we can show that $\B(\lambda)$ is a lower
	perfect basis of $V(\lambda)$.
\end{proof}

\vskip 3mm

\begin{lemma} \label{lem:basic}

	{\rm
		Let $V$ be a $P$-graded vector space with a lower perfect basis $\B$.
		
		\vskip 2mm
		
		\begin{enumerate}
			
			\item[(a)] For any $b \in  \B$, $n \in \Z_{\ge 0}$ and $(i,l) \in I^{\infty}$,
			there exists $c \in {\mathbf k}^{\times}$ such that
			
			\begin{equation*}
				{\mathtt f}_{il}^n (b) - c\, {\mathbf f}_{il}^n (b) \in {\mathtt f}_{il}^{d_{il}(b) + n+ 1} \, V.
			\end{equation*}
			
			\vskip 2mm
			
			\item[(b)] For any $(i,l) \in I^{\infty}$ and $n \in \Z_{\ge 0}$, we have
			
			\begin{equation*}
				{\mathtt f}_{il}^n \, V = \bigoplus_{b \in {\mathbf f}_{il}^n \, \B} \, {\mathbf k} \, b
			\end{equation*}
			
			\vskip 2mm
			
			\item[(c)] For any $b \in \B$, we have
			
			\begin{equation*}
				d_{il}(b) = \max \{ n \ge 0 \mid b \in {\mathbf f}_{il}^n \, \B \}.
			\end{equation*}
			
			\vskip 2mm
			
			\item[(d)] For any $b \in \B$ with ${\mathbf f}_{il} b \neq 0$, we have
			
			\begin{equation*}
				d_{il}({\mathbf f}_{il}b) = d_{il}(b) + 1.
			\end{equation*}
			
			\vskip 2mm
			
			\item[(e)] For any $n \ge 0$, the image of the set
			$\{ b + {\mathtt f}_{il}^{n+1}\, V \mid
			b \in \B, d_{il}(b) = n \}$
			is a basis of ${\mathtt f}_{il}^n \, V \big/ {\mathtt f}_{il}^{n+1} \, V$.
			
		\end{enumerate}
		
	}
	
\end{lemma}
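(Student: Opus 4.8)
The plan is to prove the five assertions in the order (a) $\Rightarrow$ (b) $\Rightarrow$ (c), (d) separately, and (e) as a consequence of (b). The starting point is Definition \ref{def:lower perfect basis}(a)(ii): for every $b\in\B$ there is $c_b\in\mathbf k^\times$ with ${\mathtt f}_{il}(b)-c_b\,{\mathbf f}_{il}(b)\in{\mathtt f}_{il}^{d_{il}(b)+2}V$. For part (a) I would induct on $n$. The base case $n=0$ is trivial (take $c=1$). For the inductive step, write ${\mathtt f}_{il}^{n+1}(b)={\mathtt f}_{il}({\mathtt f}_{il}^n(b))$, substitute the inductive hypothesis ${\mathtt f}_{il}^n(b)=c\,{\mathbf f}_{il}^n(b)+w$ with $w\in{\mathtt f}_{il}^{d_{il}(b)+n+1}V$, so that ${\mathtt f}_{il}^{n+1}(b)=c\,{\mathtt f}_{il}({\mathbf f}_{il}^n(b))+{\mathtt f}_{il}(w)$. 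The term ${\mathtt f}_{il}(w)$ lies in ${\mathtt f}_{il}^{d_{il}(b)+n+2}V$. For the main term, note that if ${\mathbf f}_{il}^n(b)=0$ then $b\in{\mathtt f}_{il}^{d_{il}(b)+n+1}V$ would have to be examined — but actually ${\mathbf f}_{il}^n(b)$ is either $0$ or an element $b'\in\B$; in the latter case, apply (ii) to $b'$, giving ${\mathtt f}_{il}(b')-c_{b'}{\mathbf f}_{il}(b')\in{\mathtt f}_{il}^{d_{il}(b')+2}V$, and one needs $d_{il}(b')\ge d_{il}(b)+n$. This last inequality is exactly what requires care, and I expect it to be the crux of the whole lemma: it says that applying ${\mathbf f}_{il}$ to $b$ raises the $d_{il}$-depth by (at least) one each time, which is essentially part (d). So I would actually prove (d) first, or prove (a) and (d) by a simultaneous induction.

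For part (d): suppose ${\mathbf f}_{il}b=b'\ne 0$. From (ii), ${\mathtt f}_{il}(b)=c_b\,b'+w$ with $w\in{\mathtt f}_{il}^{d_{il}(b)+2}V$. Since $b\in{\mathtt f}_{il}^{d_{il}(b)}V$, we get ${\mathtt f}_{il}(b)\in{\mathtt f}_{il}^{d_{il}(b)+1}V$, hence $b'=c_b^{-1}({\mathtt f}_{il}(b)-w)\in{\mathtt f}_{il}^{d_{il}(b)+1}V$, so $d_{il}(b')\ge d_{il}(b)+1$. For the reverse inequality I would argue by contradiction using a minimal-weight / well-foundedness argument: among all $b\in\B$ for which $d_{il}({\mathbf f}_{il}b)>d_{il}(b)+1$, pick one whose weight $\mathrm{wt}(b)$ is maximal (possible by the lower-finiteness condition (ii) of Definition \ref{def:weak lower perfect}). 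Then $b'={\mathbf f}_{il}b\in{\mathtt f}_{il}^{d_{il}(b)+2}V={\mathtt f}_{il}({\mathtt f}_{il}^{d_{il}(b)+1}V)$, so $b'={\mathtt f}_{il}(v)$ for some $v\in{\mathtt f}_{il}^{d_{il}(b)+1}V$. Expanding $v$ in the basis $\B$ and matching with the structural relation (ii) applied to each basis element appearing in $v$ — together with injectivity of ${\mathbf f}_{il}$ on its non-zero fibers, which follows from condition (iii) combined with the fact that distinct $b$'s give distinct ${\mathbf f}_{il}(b)$ when nonzero — should force a basis element $\tilde b$ with ${\mathbf f}_{il}\tilde b = b'$, hence $\tilde b = b$, but with $\tilde b$ contributing to $v\in{\mathtt f}_{il}^{d_{il}(b)+1}V$, contradicting $d_{il}(b)=d_{il}(b)$. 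This is the delicate part and I would write it carefully, likely modeled on the analogous argument in \cite{KOP11, KKKS}.

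Once (a) and (d) are in hand, part (b) is almost formal. For the inclusion $\supseteq$: each $b\in{\mathbf f}_{il}^n\B$ is ${\mathbf f}_{il}^n(b_0)$ for some $b_0\in\B$, and by (a) there is $c\in\mathbf k^\times$ with $c^{-1}{\mathtt f}_{il}^n(b_0)\equiv b$ modulo ${\mathtt f}_{il}^{d_{il}(b_0)+n+1}V\subseteq{\mathtt f}_{il}^n V$, whence $b\in{\mathtt f}_{il}^n V$; one then iterates / uses that ${\mathtt f}_{il}^n V$ is spanned by $\{{\mathtt f}_{il}^n(b_0):b_0\in\B\}$ since $\B$ is a basis of $V$. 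For the inclusion $\subseteq$: ${\mathtt f}_{il}^n V$ is spanned by ${\mathtt f}_{il}^n(b_0)$ over $b_0\in\B$, and each such element, by (a), equals $c\,{\mathbf f}_{il}^n(b_0)$ plus something in ${\mathtt f}_{il}^{d_{il}(b_0)+n+1}V$; a downward induction on codimension (or on $d_{il}(b_0)$, bounded using the finiteness hypotheses) expresses it in $\bigoplus_{b\in{\mathbf f}_{il}^n\B}\mathbf k\,b$. Linear independence of ${\mathbf f}_{il}^n\B$ is inherited from linear independence of $\B$. Part (c) then reads off immediately from (b): $b\in{\mathtt f}_{il}^n V$ iff $b$ appears in the spanning set $\{b'\in{\mathbf f}_{il}^n\B\}$ (by linear independence within the graded piece $V_{\mathrm{wt}(b)}$), i.e. iff $b\in{\mathbf f}_{il}^n\B$; taking the max over such $n$ gives $d_{il}(b)=\max\{n\ge 0\mid b\in{\mathbf f}_{il}^n\B\}$, finite by the finiteness of $d_{il}(b)$. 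Finally, for (e): by (b), ${\mathtt f}_{il}^n V=\bigoplus_{b\in{\mathbf f}_{il}^n\B}\mathbf k\,b$ and ${\mathtt f}_{il}^{n+1}V=\bigoplus_{b\in{\mathbf f}_{il}^{n+1}\B}\mathbf k\,b$, and by (c)–(d) the complementary set ${\mathbf f}_{il}^n\B\setminus{\mathbf f}_{il}^{n+1}\B$ is precisely $\{b\in\B\mid d_{il}(b)=n\}$, so the images of these $b$'s form a basis of the quotient. The main obstacle throughout is establishing $d_{il}({\mathbf f}_{il}b)=d_{il}(b)+1$ (part (d)), specifically the non-trivial direction $d_{il}({\mathbf f}_{il}b)\le d_{il}(b)+1$, which requires the well-foundedness argument and a careful bookkeeping of the error terms in condition (ii) across a basis expansion.
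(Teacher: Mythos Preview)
Your overall strategy is sound and would succeed, but you have misidentified the ``main obstacle.'' You claim that the nontrivial direction of (d), namely $d_{il}(\mathbf f_{il}b)\le d_{il}(b)+1$, is the crux and must be established up front via a well-foundedness/contradiction argument before (a) can be proved. In fact only the \emph{easy} inequality $d_{il}(\mathbf f_{il}b)\ge d_{il}(b)+1$ (which you already derive in one line from condition (ii)) is needed in the induction for (a): when you apply condition (ii) to $b'=\mathbf f_{il}^{n}(b)$, you need $d_{il}(b')+2\ge d_{il}(b)+n+2$, and the iterated inequality $d_{il}(\mathbf f_{il}^{n}b)\ge d_{il}(b)+n$ suffices. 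This is exactly the route the paper takes.

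Once (a) is in hand, (b) follows from (a) together with the finiteness hypothesis on weights (for each $\mu$ one has $(\mathtt f_{il}^{n}V)_{\mu}=0$ for $n\gg 0$, so the chain $\mathtt f_{il}^{n}V\supset\mathtt f_{il}^{n+1}V\supset\cdots$ terminates and the inclusion $\mathtt f_{il}^{n}V\subset\bigoplus_{b\in\mathbf f_{il}^{n}\B}\mathbf k\,b+\mathtt f_{il}^{n+1}V$ can be iterated). Then (c), (d), and (e) are all formal consequences of (b): since $\B$ is a basis, $b\in\mathtt f_{il}^{n}V$ if and only if $b\in\mathbf f_{il}^{n}\B$, which gives (c); and the hard direction of (d) drops out because $\mathbf f_{il}b\in\mathbf f_{il}^{m+2}\B$ would force (via injectivity of $\mathbf f_{il}$ on nonzero values) $b\in\mathbf f_{il}^{m+1}\B$, contradicting $d_{il}(b)=m$. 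So your proposed minimal-weight contradiction argument for (d) is unnecessary: the paper simply says ``(c), (d), (e) follow from (b).'' Your plan works, but it does more labour than required and in the wrong order.
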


\vskip 2mm

\begin{proof} \
	To prove (a), we will use  induction on $n$.
	
	\vskip 3mm
	
	By the definition of $d_{il}$, we have
	\begin{equation*}
		d_{il}({\mathbf f}_{il}\, b) \ge d_{il}(b) + 1 \ \ \text{for any} \
		b \in \B \ \ \text{with} \ {\mathbf f}_{il} b \neq 0.
	\end{equation*}
	
	\vskip 2mm
	
	\noindent
	Hence if $b \in \B$ and ${\mathbf f}_{il}^n (b) \neq 0$ for $n \ge 0$, then  we obtain
	\begin{equation} \label{eq:ind1}
		d_{il}({\mathbf f}_{il}^n b) \ge d_{il} ({\mathbf f}_{il}^{n-1} b) + 1
		\ge \cdots \ge d_{il}({\mathbf f}_{il} b) + n-1 \ge d_{il}(b) + n.
	\end{equation}

	\vskip 2mm
	
	If  $n=0, 1$, then (a) is trivial.
	
	\vskip 2mm
	
	Assume that $n \ge 2$.
	By the induction hypothesis, we have
	\begin{equation*}
		{\mathtt f}_{il}^{n-1}(b) - c \, {\mathbf f}_{il}^{n-1}(b)
		\in {\mathtt f}_{il}^{d_{il}(b) + n} V \ \ \text{for some} \ c \in {\mathbf k}^{\times}.
	\end{equation*}

	\vskip 2mm
	
	\noindent
	It follows that
	\begin{equation} \label{eq:ind2}
		{\mathtt f}_{il}^n b - c \, {\mathtt f}_{il} \, {\mathbf f}_{il}^{n-1} b
		\in {\mathtt f}_{il}^{d_{il}(b) + n+1} \, V.
	\end{equation}
	
	\vskip 2mm
	
	If ${\mathbf f}_{il}^{n-1} b =0$, we are done.
	
	\vskip 2mm
	
	If ${\mathbf f}_{il}^{n-1} b \neq 0$,
	by the definition of lower perfect bases and \eqref{eq:ind1},
	there exists $c' \in {\mathbf k}^{\times}$ such that
	\begin{equation} \label{eq:ind3}
		{\mathtt f}_{il} ({\mathbf f}_{il}^{n-1} b) - c' \, {\mathbf f}_{il}^n (b)
		\in {\mathtt f}_{il}^{d_{il}({\mathbf f}_{il}^{n-1}b) + 2} \, V
		\subset {\mathtt f}_{il}^{d_{il}(b) + (n-1) + 2} \, V
		= {\mathtt f}_{il}^{d_{il}(b) + n + 1} \, V.
	\end{equation}
	
	\vskip 3mm
	
	Combining \eqref{eq:ind2} and \eqref{eq:ind3}, we obtain the
	desired result.
	
	\vskip 3mm
	
	(b) Since (a) holds for any $b  \in \B$, $n \ge 0$ and $(i,l) \in I^{\infty}$, we have
	\begin{equation*}
		\bigoplus_{b \in {\mathbf f}_{il}^n \B} {\mathbf k} \, b \subset {\mathtt f}_{il}^n \,V.
	\end{equation*}
	
	Moreover, since $\B$ is a basis of $V$, we have
	\begin{equation*}
		{\mathtt f}_{il}^n V \,
		\subset \, \bigoplus_{b \in {\mathbf f}_{il}^{n} \B} \, {\mathbf k}b
		+ {\mathtt f}_{il}^{d_{il}(b) + n+1} V
		\, \subset \, \bigoplus_{b \in {\mathbf f}_{il}^{n} \B} \, {\mathbf k}b
		+ {\mathtt f}_{il}^{n+1} V.
	\end{equation*}
	
	\vskip 3mm
	
	Note that for any $\mu \in P$, we have $({\mathtt f}_{il}^n V)_{\mu} = 0$ for $n \gg 0$. Then, we assume  $({\mathtt f}_{il}^n V)_{\mu} = 0$ for all $n \ge k + 1$.
	It would yield
	
	\begin{equation*}
		({\mathtt f}_{il}^n \, V)_{\mu} =  \left(\bigoplus_{b \in ({\mathbf f}_{il}^n \B)_{\mu}}  \, {\mathbf k} \, b \right)
		\oplus \left(\bigoplus_{b' \in ({\mathbf f}_{il}^{n+1} \B)_{\mu}}  \, {\mathbf k} \, b' \right)
		\oplus \cdots  \oplus \left(\bigoplus_{b'' \in ({\mathbf f}_{il}^{k} \B)_{\mu}}  \, {\mathbf k} \, b'' \right).
	\end{equation*}
	
	\vskip 2mm
	
	Since
	$${\mathtt f}_{il}^n \, V \supset {\mathtt f}_{il}^{n+1} \, V
	\supset {\mathtt f}_{il}^{n+2}\, V \supset \cdots,$$
	we conclude
	$${\mathtt f}_{il}^n \, V = \bigoplus_{b \in {\mathbf f}_{il}^n \B} \, {\mathbf k} b,$$
	which is the statement (b).
	
	\vskip 3mm
	
	The statements (c), (d), (e) follow from (b).
\end{proof}

\vskip 3mm

Let $\B$ be a lower perfect basis of a $P$-graded space $V = \bigoplus_{\mu \in P} V_{\mu}$.

\vskip 2mm

For $b \in \B$, we define

\begin{equation*}
	{\mathbf e}_{il}\, b =
	\begin{cases} b' \ \ & \text{if} \ b = {\mathbf f}_{il}\, b'.\\
		0 \ \ & \text{if} \ b \notin {\mathbf f}_{il} \, \B.
	\end{cases}
\end{equation*}

\vskip 3mm

We define the maps $\text{wt}: \B \rightarrow P$, \ $\varepsilon_{i}, \varphi_{i}:\B \rightarrow \Z \cup \{-\infty\}$
by

\begin{equation*}
	\begin{aligned}
		& \text{wt}(b) = \begin{cases} \mu \ \ & \text{if} \ b \in \B_{\mu}, \\
			0 \ \ & \text{if} \ b \notin \B_\mu,
		\end{cases} \\
		& \varepsilon_{i}(b) = \begin{cases}
		 d_{i}(b) \ \ & \text{if} \ i \in I^{\text{re}}, \\
		 0 \ \ & \text{if} \ i \in I^{\text{im}},
		\end{cases} \\
		& \varphi_{i}(b) = \varepsilon_{i}(b) + \langle h_{i}, \text{wt}(b) \rangle.
	\end{aligned}
\end{equation*}

\vskip 2mm

Then it is straightforward to verify that $(\B, {\mathbf e}_{il}, {\mathbf f}_{il}, \text{wt},
\varepsilon_{i}, \varphi_{i})$ is an abstract crystal, called the {\it lower perfect graph} of $\B$.

\vskip 3mm

\begin{proposition} \label{prop:lower crystal} \hfill
	\vskip 2mm
	
	{\rm
		
		\begin{enumerate}
			
			\item[(a)] The lower perfect graph of $\B(\infty)$ is isomorphic to $B(\infty)$.
			
			\vskip 2mm
			
			\item[(b)]  For $\lambda \in P^{+}$, the lower perfect graph of $\B(\lambda)$ is isomorphic to $B(\lambda)$.
			
		\end{enumerate}
		
	}
\end{proposition}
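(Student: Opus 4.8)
The plan is to combine two ingredients: the fact (established in Proposition~\ref{prop:exist lps}) that $\B(\infty)$ and $\B(\lambda)$ are lower perfect bases with the Kashiwara operators $\widetilde{f}_{il}$ serving as the maps ${\mathtt f}_{il}$ (and hence with ${\mathbf f}_{il}G(b) = G(\widetilde{f}_{il}b)$), together with the characterization of $B(\infty)$ and $B(\lambda)$ as crystals generated by a single highest weight element under the operators $\widetilde{f}_{il}$. First I would observe that, by the very definitions, the map $G\colon B(\infty) \to \B(\infty)$ (resp.\ $G\colon B(\lambda)\to\B(\lambda)$) sending a crystal basis element $b$ to the corresponding lower global basis vector is a bijection that by construction intertwines the crystal operator $\widetilde{f}_{il}$ on $B(\infty)$ with the combinatorial operator ${\mathbf f}_{il}$ on the lower perfect graph of $\B(\infty)$: indeed ${\mathbf f}_{il}(G(b))$ was defined in the proof of Proposition~\ref{prop:exist lps} to be exactly $G(\widetilde{f}_{il}b)$.

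Next I would check that $G$ also intertwines the remaining crystal data: $\mathrm{wt}$, $\varepsilon_i$, $\varphi_i$, and ${\mathbf e}_{il}$. Compatibility with $\mathrm{wt}$ is immediate since $G$ preserves weight spaces. Compatibility with ${\mathbf e}_{il}$ follows from the defining property (e) of a crystal basis / abstract crystal, $\widetilde{f}_{il}b = b' \iff b = \widetilde{e}_{il}b'$, which transports under $G$ to ${\mathbf f}_{il}(G(b)) = G(b') \iff G(b) = {\mathbf e}_{il}(G(b'))$ by the definition of ${\mathbf e}_{il}$ on the perfect graph. The one point that requires a short argument is the equality of the $\varepsilon_i$ functions for $i\in I^{\mathrm{re}}$, where the perfect-graph definition uses $\varepsilon_i(G(b)) = d_i(G(b))$; here I would invoke Lemma~\ref{lem:basic}(c), which says $d_i(G(b)) = \max\{n\ge 0 \mid G(b)\in {\mathbf f}_i^n\,\B\}$, and match this against the crystal-theoretic $\varepsilon_i(b) = \max\{n \ge 0 \mid \widetilde{e}_i^{\,n} b \ne 0\}$ on $B(\infty)$ (resp.\ $B(\lambda)$), the two being equal because ${\mathbf f}_i$ and $\widetilde{f}_i$ correspond under $G$ and $\widetilde{e}_i$ is the partial inverse of $\widetilde{f}_i$. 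For $i\in I^{\mathrm{im}}$ both sides are $0$ by the definition in the perfect graph and by property (e$'$) of abstract crystals together with $\widetilde{e}_{il}\circ\widetilde{f}_{il}=\mathrm{id}$; and $\varphi_i$ then matches automatically since $\varphi_i = \varepsilon_i + \langle h_i, \mathrm{wt}\rangle$ on both sides.

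Having verified that $G$ is a bijection compatible with all the structure maps, I conclude that $G$ is a strict crystal isomorphism from $B(\infty)$ (resp.\ $B(\lambda)$) onto the lower perfect graph of $\B(\infty)$ (resp.\ $\B(\lambda)$), which is the assertion. The main obstacle I anticipate is purely bookkeeping around the isomorphic case $i\in I^{\mathrm{iso}}$: there $\mathtt b_{il}G(b)$ is not simply $G(\widetilde{f}_{il}b)$ but a sum $\sum_{\mathbf c}(\mathbf c_l+1)G(\widetilde{f}_{il}b_{\mathbf c})$ (Proposition~\ref{Eil and bil}(b) and Corollary~\ref{cor:bilGb}(b)), so one must be careful that it is still $\widetilde{f}_{il}$ — not $\mathtt b_{il}$ — that plays the role of ${\mathtt f}_{il}$ in Definition~\ref{def:lower perfect basis}, and that the perfect-basis axioms (ii), (iii) hold with this choice and with $c=1$; this is exactly what was checked in Proposition~\ref{prop:exist lps}, so here it only needs to be cited, not redone. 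Everything else is the routine transport of structure under the linear isomorphism $G$.
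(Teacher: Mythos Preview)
Your proposal is correct and follows essentially the same approach as the paper: both use the bijection $G$ and the definition ${\mathbf f}_{il}G(b)=G(\widetilde f_{il}b)$ from Proposition~\ref{prop:exist lps} to transport the crystal structure. The paper's proof is simply terser --- it checks only the $\widetilde f_{il}\leftrightarrow{\mathbf f}_{il}$ compatibility and leaves the remaining crystal data (${\mathbf e}_{il}$, $\mathrm{wt}$, $\varepsilon_i$, $\varphi_i$) implicit --- whereas you spell these out; your extra care, including the remark on the $I^{\mathrm{iso}}$ case, is accurate but not strictly needed beyond what the paper records.
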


\begin{proof} \ Let $b, b' \in B(\infty)$ such that $\widetilde{f}_{il} b = b'$.
	Since $G$ is a $\Q$-linear isomorphism, we have $G(\widetilde{f}_{il} b) = G(b')$.
	The definition of ${\mathbf f}_{il}$ yields
	${\mathbf f}_{il} \, G(b) = G(b').$
	
	\vskip 2mm
	
	Conversely, if ${\mathbf f}_{il} \, G(b) = G(b')$ for $b, b' \in B(\infty)$, it is clear that
	$\widetilde{f}_{il}b = b'$.
	
	\vskip 3mm
	
	By the same argument, the statement (b) follows.
\end{proof}

\vskip 3mm

Let $\B$ be a lower perfect basis of  $V=\bigoplus_{\mu \in P} V_{\mu}$.
For $b \in \B$ and $(i,l)\in I^{\infty}$, define

\begin{equation} \label{eq:etop}
	{\mathbf e}_{il}^{\text{top}}(b) := {\mathbf e}_{il}^{d_{il}(b)}(b) \in \B.
\end{equation}

\vskip 2mm

\noindent
More generally, for a sequence ${\mathbf i} =((i_1, l_1), \ldots, (i_r, l_r))$, we define

\begin{equation} \label{eq:etopseq}
	{\mathbf e}_{\mathbf{i}}^{\text{top}} (b): = {\mathbf e}_{i_r l_r}^{\text{top}} \
	\cdots \ {\mathbf e}_{i_1 l_1}^{\text{top}}(b).
\end{equation}

\vskip 2mm

In addition, for a sequence ${\mathbf a} = (a_{1}, \ldots, a_{r}) \in \Z_{\ge 0}^r$,
we define

\begin{equation} \label{eq:efseq}
	\begin{aligned}
		& {\mathbf e}_{\mathbf{i}, \mathbf{a}}(b) = {\mathbf e}_{i_r l_r}^{a_r} \cdots
		{\mathbf e}_{i_1 l_1}^{a_1} (b) \in \B \cup \{0\}, \\
		& {\mathbf f}_{\mathbf{i}, \mathbf{a}}(b) = {\mathbf f}_{i_1 l_1}^{a_1} \cdots
		{\mathbf f}_{i_r l_r}^{a_r} (b) \in \B \cup \{0\}.
	\end{aligned}
\end{equation}

\vskip 3mm

We will also use the notation

\begin{equation} \label{eq:fseq}
	{\mathtt f}_{\mathbf{i}, \mathbf{a}} = {\mathtt f}_{i_1 l_1}^{a_1} \cdots {\mathtt f}_{i_r l_r}^{a_r}.
\end{equation}

\vskip 3mm

\begin{definition} \label{def:good seq} \hfill
	
	\vskip 2mm
	
	{\rm
		
		\begin{enumerate}
			
			\item[(a)] A sequence of non-negative integers ${\mathbf a} = (a_{k})_{k \ge 1}$
			is {\it good} if $a_{k} = 0$ for $k \gg 0$.
			
			\vskip 2mm
			
			\item[(b)]  A sequence of indices ${\mathbf i} = ((i_k, l_k))_{k \ge 1}$ is {\it good}
			if every $(i,l) \in I^{\infty}$ appears infinitely many times in ${\mathbf i}$.

		\end{enumerate}
		
	}
\end{definition}

\vskip 2mm

Let $\mathbf{i} = ((i_k, l_k))_{k \ge 1}$ be a good sequence of indices and let
${\mathbf a} = (a_{k})_{k \ge 1}$ be a good sequence of non-negative integers
with $a_{k} = 0$ for all $k \ge r+1$.
Define
\begin{equation*}
	\begin{aligned}
		& V^{> \mathbf{i}, \mathbf{a}} = \sum_{k=1}^r {\mathtt f}_{i_1 l_1}^{a_1} \cdots
		{\mathtt f}_{i_{k-1}l_{k-1}}^{a_{k-1}}
		{\mathtt f}_{i_k l_k}^{a_k+1} \, V, \\
		& V^{\ge \mathbf{i}, \mathbf{a}} = V^{>\mathbf{i}, \mathbf{a}}
		+ {\mathtt f}_{i_1 l_1}^{a_1} \cdots {\mathtt f}_{i_r l_r}^{a_r} \, V.
	\end{aligned}
\end{equation*}

\vskip 2mm

The set of good sequences of non-negative integers  is endowed with
a total ordering given as follows.
Let ${\mathbf a} = (a_{k})_{k \ge 1}$ and ${\mathbf a}' = (a_{k}')_{k \ge 1}$ be good sequences
of non-negative integers.
We define ${\mathbf a}' < {\mathbf a}$ if and only if there exists some $m\le r$ such that
$a_{s}' = a_{s}$ for all $s<m$ and $a_{m}' < a_{m}$.

\vskip 2mm

Under this ordering, if ${\mathbf a}' < {\mathbf a}$, then we have

\begin{equation*}
	V^{\ge \mathbf{i}, \mathbf{a}} \subset V^{\ge \mathbf{i}, \mathbf{a}'}
	\ \ \text{and} \ \
	V^{> \mathbf{i}, \mathbf{a}} \subset V^{\ge \mathbf{i}, \mathbf{a}'}.
\end{equation*}

\vskip 2mm

Let $v \in V \setminus \{0\}$ and let ${\mathbf i} = ((i_k, l_k))_{k \ge 1}$
be a good sequence of indices.
We denote by ${\mathbf L}(\mathbf{i}, v)$ the largest sequence of
non-negative integers $\mathbf{a}$
such that $v \in V^{\ge \mathbf{i}, \mathbf{a}}$.
In the next proposition, we will show that ${\mathbf L}(\mathbf{i}, v)$ exists
and it is a good sequence.

\vskip 3mm

We define the {\it highest core} of $\B$ to be
\begin{equation*}
	\B_{H} = \{ b \in \B \mid d_{il}(b) = 0 \ \ \text{for all} \ (i,l) \in I^{\infty} \}.
\end{equation*}

\vskip 3mm

\begin{proposition} \label{prop:hwcore} 
	{\rm
		
		Let ${\mathbf i} = ((i_k, l_k))_{k \ge 1}$ be a  good sequence of indices.
		
		\vskip 2mm
		
		\begin{enumerate}
			
			\item[(a)] For any $b \in \B$ and a sequence ${\mathbf a} = (a_1, \ldots, a_r) \in \Z_{\ge 0}^{r}$,
			there exists $c \in {\mathbf k}^{\times}$ such that
			\begin{equation*}
				{\mathtt f}_{i_1 l_1}^{a_1} \cdots {\mathtt f}_{i_r l_r}^{a_r} b
				- c \, {\mathbf f}_{i_1 l_1}^{a_1} \cdots {\mathbf f}_{i_r l_r}^{a_r}  b \in \,
				\sum_{k=1}^{r} {\mathtt f}_{i_1 l_1}^{a_1} \cdots {\mathtt f}_{i_r l_k}^{a_k+1} \, V
				= V^{> \mathbf{i}, \mathbf{a}}.
			\end{equation*}

			\vskip 2mm
			
			\item[(b)] For each $b \in \B$, define the sequences ${\mathbf b}= (b_k)_{k \ge 1}$
			and ${\mathbf d} (\mathbf{i}, b)= (d_{k})_{k \ge 1}$ by			
			\begin{equation}\label{eq:bk dk}
				b_{0} = b, \ \ b_{k} = {\mathbf e}_{i_k l_k}^{\text{top}}(b_{k-1}),
				\quad
				d_{k} = d_{i_k l_k} (b_{k-1}) \ \ \text{for} \ k \ge 1.
			\end{equation}
			
			\vskip 2mm
			
			Then obtain
			
			\vskip 2mm
			
			\begin{enumerate}
				
				\item[(i)] ${\mathbf d}(\mathbf{i}, b)$ is a good sequence.
				
				\vskip 2mm
				
				\item[(ii)] $b_{k} \in \B_{H}$ for $k \gg 0$.
				
				\vskip 2mm
				
				\item[(iii)] ${\mathbf d}(\mathbf{i}, b) = {\mathbf L}(\mathbf{i}, b)$.
				
			\end{enumerate}
			
			\vskip 2mm
			
			\item[(c)] For any good sequence ${\mathbf a} = (a_k)_{k \ge 1}$, we have
			
			\begin{equation*}
				V^{\ge \mathbf{i}, \mathbf{a}} = \sum_{b \in \B \atop  {\mathbf d}(\mathbf{i}, b) \ge \mathbf{a} }
				\mathbf{k} \, b  \quad \ \text{and} \quad \
				V^{> \mathbf{i}, \mathbf{a}} = \sum_{b \in \B \atop  {\mathbf d}(\mathbf{i}, b) > \mathbf{a} }
				\mathbf{k} \, b.
			\end{equation*}
			
			\vskip 2mm
			
			\item[(d)] For any good sequence $\mathbf{a} = (a_{k})_{k \ge 1}$ with $a_{k} = 0$ for all $k \ge r+1$,
			define
			
			\begin{equation*}
				\B_{\mathbf{i}, \mathbf{a}} = \{ b\in \B \mid {\mathbf d}(\mathbf{i}, b) = \mathbf{a} \}.
			\end{equation*}
			
			\vskip 2mm
			
			Then we have an injective map			
			\begin{equation*}
				{\mathbf e}_{\mathbf{i}, \mathbf{a}} : \B_{\mathbf{i}, \mathbf{a}} \longrightarrow \B_{H}
				\quad \text{given by} \ \  b \mapsto
				{\mathbf e}_{\mathbf{i}, \mathbf{a}}(b) = {\mathbf e}_{i_r l_r}^{a_r} \cdots {\mathbf e}_{i_1 l_1}^{a_1} (b).
			\end{equation*}

		\end{enumerate}
		
	}
	
\end{proposition}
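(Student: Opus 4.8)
The plan is to establish (a) by a formal induction, to deduce (b)(i)--(ii) from the finiteness conditions in the definition of a weak lower perfect space, and then to obtain (c), (b)(iii) and (d) by linear-algebra bookkeeping around the filtration $V^{\ge\mathbf i,\mathbf a}$. For (a) I would induct on $r$, the case $r=1$ being exactly Lemma~\ref{lem:basic}(a). For $r\ge2$, put $w=\mathtt f_{i_2l_2}^{a_2}\cdots\mathtt f_{i_rl_r}^{a_r}b$ and $b'=\mathbf f_{i_2l_2}^{a_2}\cdots\mathbf f_{i_rl_r}^{a_r}b$; the induction hypothesis gives $c'\in\mathbf k^\times$ with $w-c'b'\in\sum_{k=2}^r\mathtt f_{i_2l_2}^{a_2}\cdots\mathtt f_{i_kl_k}^{a_k+1}V$, and applying $\mathtt f_{i_1l_1}^{a_1}$ pushes the right-hand side into $V^{>\mathbf i,\mathbf a}$. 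If $b'\neq0$, Lemma~\ref{lem:basic}(a) applied to $b'$ gives $\mathtt f_{i_1l_1}^{a_1}b'-c''\mathbf f_{i_1l_1}^{a_1}b'\in\mathtt f_{i_1l_1}^{a_1+1}V\subset V^{>\mathbf i,\mathbf a}$, and adding the two congruences proves the claim with $c=c'c''$; if $b'=0$ the same lemma (with target $0$) already puts $\mathtt f_{i_1l_1}^{a_1}w$ in $V^{>\mathbf i,\mathbf a}$ and any $c$ works. The only care needed is to check every error term lands in $V^{>\mathbf i,\mathbf a}$.

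For (b)(i)--(ii) the finiteness enters. By Lemma~\ref{lem:basic}(c) the top power of $\mathbf e_{i_kl_k}$ carries $b_{k-1}$ to an actual basis element, so each $b_k\in\B$, and $\text{wt}(b_k)=\text{wt}(b_{k-1})+d_kl_k\alpha_{i_k}$: the weights weakly increase, and strictly exactly when $d_k>0$. Now $\{\mu\in\text{wt}(V)\mid\mu\ge\text{wt}(b)\}$ is finite, since for $\mu\ge\text{wt}(b)$ with $\mu\in\lambda_j-R_+$ the fixed element $\lambda_j-\text{wt}(b)$ equals $(\lambda_j-\mu)+(\mu-\text{wt}(b))$, a sum of two elements of $R_+$, and a fixed element of $R_+$ admits only finitely many such decompositions. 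Hence the increasing chain $\text{wt}(b_0)\le\text{wt}(b_1)\le\cdots$ stabilizes, so $d_k=0$ for $k\gg0$, which is (i); once $d_k=0$ for all $k\ge N$ the sequence $(b_k)$ is constant from $N-1$ on, and since $\mathbf i$ is good every $(i,l)\in I^\infty$ occurs at some index $\ge N$, forcing $d_{il}(b_{N-1})=0$ for all $(i,l)$, i.e. $b_{N-1}\in\B_H$, which is (ii).

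For (c), the inclusion $\supseteq$ is immediate from (a): if $\mathbf d(\mathbf i,b'')\ge\mathbf a$, unwinding $b''=\mathbf f_{i_1l_1}^{d_1}\cdots\mathbf f_{i_rl_r}^{d_r}(b_r)$ and applying (a) gives $b''\in V^{\ge\mathbf i,\mathbf d(\mathbf i,b'')}\subset V^{\ge\mathbf i,\mathbf a}$, and similarly for the $>$ version. For $\subseteq$, write $V^{\ge\mathbf i,\mathbf a}=V^{>\mathbf i,\mathbf a}+\mathtt f_{\mathbf i,\mathbf a}V$; by (a) each spanning vector $\mathtt f_{\mathbf i,\mathbf a}b'$ is, modulo $V^{>\mathbf i,\mathbf a}$, a scalar times $\mathbf f_{\mathbf i,\mathbf a}b'$, and by Lemma~\ref{lem:basic}(d) (so $d_{il}(\mathbf f_{il}^{a}w)\ge a$) a short induction shows $\mathbf f_{\mathbf i,\mathbf a}b'$ is either $0$ or a basis element $b''$ with $\mathbf d(\mathbf i,b'')\ge\mathbf a$; one then repeats this on $V^{>\mathbf i,\mathbf a}=\sum_k\mathtt f_{\mathbf i,\mathbf a^{(k)}}V$ (with $\mathbf a^{(k)}$ the result of raising the $k$-th entry of $\mathbf a$ by one, so $\mathbf a^{(k)}>\mathbf a$), the recursion terminating in each weight space $V_\mu$ because only finitely many good sequences $\mathbf a'$ satisfy $(\mathtt f_{\mathbf i,\mathbf a'}V)_\mu\neq0$. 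Then (b)(iii) follows since, $b$ being a basis vector, (c) gives $b\in V^{\ge\mathbf i,\mathbf a}\iff\mathbf a\le\mathbf d(\mathbf i,b)$, whence $\mathbf d(\mathbf i,b)=\mathbf L(\mathbf i,b)$; and for (d), $\mathbf d(\mathbf i,b)=\mathbf a$ means $d_k=a_k$ for all $k$, so $\mathbf e_{i_kl_k}^{a_k}$ coincides with $\mathbf e_{i_kl_k}^{\text{top}}$ along the chain, $\mathbf e_{\mathbf i,\mathbf a}(b)=b_r\in\B$ is nonzero, and since $d_k=0$ for $k>r$ the chain is constant past $r$, so $b_r\in\B_H$ by (b)(ii); injectivity is clear because $b=\mathbf f_{\mathbf i,\mathbf a}(b_r)$ recovers $b$ from $\mathbf e_{\mathbf i,\mathbf a}(b)$.

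I expect the main obstacle to be the finiteness argument in (b)(i)--(ii) — it is the sole place where conditions (i) and (ii) of a weak lower perfect space are used — together with the termination of the recursion in the $\subseteq$ direction of (c), which must be controlled weight space by weight space; the remaining steps are routine bookkeeping built on part (a) and Lemma~\ref{lem:basic}.
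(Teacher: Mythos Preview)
Your proposal is correct and follows essentially the same approach as the paper's proof: the only minor organizational differences are that in (a) you peel off the first index in the induction while the paper peels off the last, and in (c) you recurse downward along the lexicographic order on $\mathbf a$ (terminating by weight finiteness) whereas the paper inducts directly on $r$. Both variants rest on Lemma~\ref{lem:basic} and the finiteness in Definition~\ref{def:weak lower perfect} in the same way.
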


\vskip 2mm

\begin{proof} \ (a) We will use induction on $r$.
	If $r=1$, our assertion follows from Lemma \ref{lem:basic} (a).
	
	\vskip 2mm
	
	Assume that $r \ge 2$.
	The induction hypothesis gives
	\begin{equation*}
		{\mathtt f}_{i_1 l_1}^{a_1} \cdots {\mathtt f}_{i_{r-1} l_{r-1}}^{a_{r-1}} ({\mathbf f}_{i_rl_r}^{a_r} b)
		- c \, {\mathbf f}_{i_1 l_1}^{a_1} \cdots {\mathbf f}_{i_{r-1} l_{r-1}}^{a_{r-1}} ({\mathbf f}_{i_r l_r}^{a_r} b)
		\, \in \, \sum_{k=1}^{r-1} {\mathtt f}_{i_1 l_1}^{a_1} \cdots {\mathtt f}_{i_k l_k}^{a_k + 1} \, V
	\end{equation*}
	for some $c \in {\mathbf k}^{\times}$.
	
	\vskip 3mm
	
	Also, by Lemma \ref{lem:basic} (a), there exists $c' \in {\mathbf k}^{\times}$ such that
	
	\begin{equation*}
		{\mathtt f}_{i_r l_r} ^{a_r}- c' \, {\mathbf f}_{i_r l_r}^{a_r} b \in {\mathtt f}_{i_r l_r}^{a_r + 1} \, V.
	\end{equation*}
	
	\vskip 3mm
	
	Thus there exist $w$, $w'$ such that
	\begin{equation*}
		w \in \sum_{k=1}^{r-1} {\mathtt f}_{i_1 l_1}^{a_1} \cdots {\mathtt f}_{i_{k-1} l_{k-1}}^{a_{k-1}}
		\mathtt f_{i_k l_k}^{a_{k}+1} \, V \quad \text{and} \quad
		w' \in {\mathtt f}_{i_r l_r}^{a_{r}+1} V
	\end{equation*}
	satisfying
	\begin{equation*}
		\begin{aligned}
			& {\mathtt f}_{i_1 l_1}^{a_1} \cdots {\mathtt f}_{i_{r-1} l_{r-1}}^{a_{r-1}} ({\mathbf f}_{i_r l_r}^{a_r} b)
			= c \, {\mathbf f}_{i_1 l_1}^{a_1} \cdots {\mathbf f}_{i_{r-1} l_{r-1}}^{a_{r-1}} ({\mathbf f}_{i_r l_r}^{a_r} b) + w, \\
			& {\mathtt f}_{i_r l_r}^{a_r} b = c' \, {\mathbf f}_{i_r l_r}^{a_r} b + w'.
		\end{aligned}
	\end{equation*}
	
	\vskip 2mm
	
	Therefore we obtain
	
	\begin{equation*}
		\begin{aligned}
			{\mathtt f}_{i_1 l_1}^{a_1} &  \cdots {\mathtt f}_{i_r l_r}^{a_r} b
			= {\mathtt f}_{i_1 l_1}^{a_1} \cdots {\mathtt f}_{i_{r-1} l_{r-1}}^{a_{r-1}}
			(c' \, {\mathbf f}_{i_r l_r}^{a_r} b + w')\\
			& = c' \, ({\mathtt f}_{i_1 l_1}^{a_1} \cdots {\mathtt f}_{i_{r-1} l_{r-1}}^{a_{r-1}}
			{\mathbf f}_{i_r l_r}^{a_r} b)
			+ {\mathtt f}_{i_1 l_1}^{a_1} \cdots {\mathtt f}_{i_{r-1} l_{r-1}}^{a_{r-1}} w' \\
			& = c' (c \, {\mathbf f}_{i_1 l_1}^{a_1} \cdots {\mathbf f}_{i_r l_r}^{a_r} b + w) + {\mathtt f}_{i_1 l_1}^{a_1} \cdots {\mathtt f}_{i_{r-1} l_{r-1}}^{a_{r-1}} w' \\
			& = c' c \, {\mathbf f}_{i_1 l_1}^{a_1} \cdots {\mathbf f}_{i_r l_r}^{a_r} b + (c'w + {\mathtt f}_{i_1 l_1}^{a_1} \cdots {\mathtt f}_{i_{r-1} l_{r-1}}^{a_{r-1}} w').
		\end{aligned}
	\end{equation*}
	
	\vskip 2mm
	
	Note that
	\begin{equation*}
		c'w +{\mathtt f}_{i_1 l_1}^{a_1} \cdots {\mathtt f}_{i_{r-1} l_{r-1}}^{a_{r-1}} w' \in \sum_{k=1}^{r} {\mathtt f}_{i_1 l_1}^{a_1} \cdots f_{i_k l_k}^{a_k + 1} V.
	\end{equation*}
	Hence our claim follows.
	
	\vskip 3mm
	
	(b) By the definitions in \eqref{eq:etop} and \eqref{eq:bk dk}, we have
	
	\begin{equation*}
		b_k={\mathbf e}_{i_kl_k}^{d_{i_kl_k}(b_{k-1})}{\mathbf e}_{i_{k-1}l_{k-1}}^{d_{i_{k-1}l_{k-1}}(b_{k-2})}\cdots {\mathbf e}_{i_1l_1}^{d_{i_1l_1}(b_{0})}(b_0).
	\end{equation*}
	
	Thus we obtain	
	\begin{equation*}
		\text{wt}(b_k) = \text{wt}(b) + \sum_{j = 1}^k d_{j}(l_j \alpha_{i_j}).
	\end{equation*}
	
	\vskip 2mm
	
	Since $V$ is a lower perfect space, $\text{wt}(b_k)$ is stable for $k \gg 0$.
	Hence we have $d_{k} = 0$ for $k \gg 0$.
	It follows that ${\mathbf d}(\mathbf{i}, b)$ is a good sequence.
	
	\vskip 2mm
	
	By (i), we have $d_{i_k l_k}(b_{k-1}) = 0$ for $k \gg 0$.
	Since $\mathbf{i}$ is a good sequence,
	every $(i,l)$ appears infinitely times in $\mathbf{i}$.
	Thus we conclude $d_{il}(b_{k}) = 0$ for all $(i,l)\in I^{\infty}$ and  $k \gg 0$,
	which gives the statement (ii).
	
	\vskip 3mm
	
	We prove the statements (b)-(iii) and (c) simultaneously.
	Let $\mathbf{a} = (a_1, \ldots, a_r)$ be a sequence of non-negative integers
	and let $\mathbf{d}_{r} = {\mathbf d}_{r}(\mathbf{i}, b)
	= (d_1, \ldots d_r)$.
	By (a), we have
	\begin{equation*}
		b \in \sum_{k=1}^{r-1} {\mathtt f}_{i_1 l_1}^{a_1} \cdots {\mathtt f}_{i_k l_k}^{a_k + 1} V
		+ {\mathtt f}_{i_1 l_1}^{a_1} \cdots {\mathtt f}_{i_r l_r}^{a_r} V.
	\end{equation*}

	Hence we obtain
	\begin{equation} \label{eq:LHS}
		\sum_{b \in \B \atop \mathbf{a} \le \mathbf{d}_{r}} \mathbf{k} \, b \subset
		\sum_{k=1}^{r-1}  {\mathtt f}_{i_1 l_1}^{a_1} \cdots {\mathtt f}_{i_k l_k}^{a_k + 1} V
		+ {\mathtt f}_{i_1 l_1}^{a_1} \cdots {\mathtt f}_{i_r l_r}^{a_r} V.
	\end{equation}
	
	\vskip 2mm
	
	We claim that
	\begin{equation} \label{eq:claim}
		\sum_{b \in \B \atop \mathbf{a} \le \mathbf{d}_{r}} \mathbf{k} \, b =
		\sum_{k=1}^{r-1}  {\mathtt f}_{i_1 l_1}^{a_1} \cdots {\mathtt f}_{i_k l_k}^{a_k + 1} V
		+ {\mathtt f}_{i_1 l_1}^{a_1} \cdots {\mathtt f}_{i_r l_r}^{a_r} V.
	\end{equation}
	
	\vskip 2mm
	
	To prove our claim, it suffices to show that
	
	\begin{equation} \label{eq:RHS}
		{\mathtt f}_{i_1 l_1}^{a_1}  \cdots {\mathtt f}_{i_r l_r}^{a_r} V \subset
		\sum_{b \in \B \atop \mathbf{a} \le \mathbf{d}_{r}} \mathbf{k} \, b.
	\end{equation}
	
	\vskip 2mm
	
	We will use induction $r$.
	When $r=1$, \eqref{eq:RHS} is trivial.
	Assume that $r \ge 2$ and set
	
	\begin{equation*}
		{\mathbf i}' = ((i_k, l_k))_{k \ge 2}, \ \ {\mathbf a}' = (a_2.\ldots, a_r) \ \ \text{and} \ \
		{\mathbf d}_{r}' = (d_2, \ldots, d_r).
	\end{equation*}
	
	\vskip 2mm
	
	The induction hypothesis gives
	
	\begin{equation*}
		{\mathtt f}_{i_2 l_2}^{a_2} \cdots {\mathtt f}_{i_r l_r}^{a_r} V
		\subset \sum_{b \in \B \atop {\mathbf a}' \le {\mathbf d}_{r}'} {\mathbf k} \, b.
	\end{equation*}
	
	\vskip 2mm
	
	Hence we have only to show that
	\begin{equation} \label{eq:LHS-RHS}
		{\mathtt f}_{i_1 l_1}^{a_1}  b \in \sum_{b' \in \B \atop {\mathbf a} \le {\mathbf d}_{r}(\mathbf{i}, b')}
		\mathbf{k} \, b'.
	\end{equation}
	
	\vskip 2mm
	
	Note that
	
	\vskip 2mm
	
	\begin{enumerate}
		
		\item[(i)] $\mathbf{d}_{r}(\mathbf{i}, {\mathbf f}_{i_1 l_1}^{a_1} b) \ge
		{\mathbf d}_{r}(\mathbf{i}, b) \ge \mathbf{a} = (a_1, \ldots, a_r)$.
		
		\vskip 2mm
		
		\item[(ii)] ${\mathtt f}_{i_1 l_1}^{a_1} b \in {\mathbf k}  \, {\mathbf f}_{i_1 l_1}^{a_1} b
		+ \sum_{d_{il}(b') > a_{1}} {\mathbf k} \, b' $.
		
	\end{enumerate}
	
	\vskip 2mm
	
	Thus  \eqref{eq:LHS-RHS} holds and \eqref{eq:RHS} follows.
	Therefore the statements (b)-(iii) and (c) are proved.
	
	\vskip 3mm
	
	(d) Since $\mathbf{d}(\mathbf{i}, b) = \mathbf{L}(\mathbf{i}, b)$,
	it is easy to see that ${\mathbf e}_{\mathbf{i}}^{\text{top}}(b)
	= {\mathbf e}_{\mathbf{i}, \mathbf{a}}(b)$.
	Hence ${\mathbf f}_{\mathbf{i}, \mathbf{a}} \circ {\mathbf e}_{\mathbf{i}, \mathbf{a}}
	= \text{id}|_{\B_{H}}$, which implies ${\mathbf e}_{\mathbf{i}, \mathbf{a}}$ is injective.
\end{proof}

\vskip 3mm

Let $\mathbf{i} = ((i_k, l_k))_{k \ge 1}$ and $\mathbf{a} = (a_{k})_{k \ge 1}$ be good sequences
and let

\begin{equation*}
	p_{\mathbf{i}, \mathbf{a}} : V^{\ge \mathbf{i}, \mathbf{a}} \longrightarrow
	V^{\ge \mathbf{i}, \mathbf{a}} \big/ V^{> \mathbf{i}, \mathbf{a}}
\end{equation*}
be the canonical projection.

\vskip 3mm

Define $$V_{H} = V \big / (\sum_{(i,l) \in I^{\infty}}{\mathtt f}_{il} \,V)$$
and let $$p_{H}: V \longrightarrow V_{H}$$ be the canonical projection.

\vskip 3mm

For a subset $S$ of a vector space $V$, we denote

\begin{equation*}
	\mathbf{k}^{\times} \, S = \{a u \mid a \in \mathbf{k}^{\times}, \ u \in S\}.
\end{equation*}

\vskip 3mm

With the above notations, we obtain the following corollary.

\vskip 3mm

\begin{corollary} \label{cor:lemma} \hfill
	
	{\rm
		
		\vskip 2mm
		
		\begin{enumerate}
			
			\item[(a)] The image  $p_{\mathbf{i}, \mathbf{a}}(\B_{\mathbf{i}, \mathbf{a}})$
			is a basis of $V^{\ge \mathbf{i}, \mathbf{a}} \big / V^{> \mathbf{i}, \mathbf{a}}$.
			
			\vskip 2mm
			
			\item[(b)] We have  $\mathbf{k}^{\times} \, p_{\mathbf{i}, \mathbf{a}}(\B_{\mathbf{i}, \mathbf{a}})
			= \mathbf{k}^{\times} (p_{\mathbf{i}, \mathbf{a}}({\mathtt f}_{\mathbf{i}, \mathbf{a}} \B_{H}) \setminus\{0\})$.
			
			\vskip 2mm
			
			\item[(c)] The map $p_{H}:\B_{H} \rightarrow V_{H}$ is injective.
			
			\vskip 2mm
			
			\item[(d)] $p_{H}(\B_{H})$ is a basis of $V_{H}$.

		\end{enumerate}
		
	}
	
\end{corollary}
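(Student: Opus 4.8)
The plan is to deduce all four parts from Proposition~\ref{prop:hwcore} and Lemma~\ref{lem:basic}; no new idea is required, only careful bookkeeping with the invariants $\mathbf{d}(\mathbf{i},b)$ and the nested subspaces $V^{\ge\mathbf{i},\mathbf{a}}\supset V^{>\mathbf{i},\mathbf{a}}$. For (a), I would invoke Proposition~\ref{prop:hwcore}(c): the set $\{b\in\B\mid\mathbf{d}(\mathbf{i},b)\ge\mathbf{a}\}$ is a basis of $V^{\ge\mathbf{i},\mathbf{a}}$ and its subset $\{b\in\B\mid\mathbf{d}(\mathbf{i},b)>\mathbf{a}\}$ is a basis of $V^{>\mathbf{i},\mathbf{a}}$. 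Hence the images under $p_{\mathbf{i},\mathbf{a}}$ of the remaining basis vectors, i.e.\ of those $b$ with $\mathbf{d}(\mathbf{i},b)=\mathbf{a}$, which is exactly $\B_{\mathbf{i},\mathbf{a}}$, form a basis of $V^{\ge\mathbf{i},\mathbf{a}}/V^{>\mathbf{i},\mathbf{a}}$. In particular $p_{\mathbf{i},\mathbf{a}}$ is injective on $\B_{\mathbf{i},\mathbf{a}}$ and $p_{\mathbf{i},\mathbf{a}}(b)\ne 0$ for every $b\in\B_{\mathbf{i},\mathbf{a}}$.

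For (b), given $b'\in\B_H$, Proposition~\ref{prop:hwcore}(a) applied to the truncation $(a_1,\dots,a_r)$ and the element $b'$ produces $c\in\mathbf{k}^{\times}$ with $\mathtt{f}_{\mathbf{i},\mathbf{a}}b'-c\,\mathbf{f}_{\mathbf{i},\mathbf{a}}b'\in V^{>\mathbf{i},\mathbf{a}}$; since $\mathtt{f}_{\mathbf{i},\mathbf{a}}b'\in\mathtt{f}_{i_1l_1}^{a_1}\cdots\mathtt{f}_{i_rl_r}^{a_r}V\subseteq V^{\ge\mathbf{i},\mathbf{a}}$, this gives $p_{\mathbf{i},\mathbf{a}}(\mathtt{f}_{\mathbf{i},\mathbf{a}}b')=c\,p_{\mathbf{i},\mathbf{a}}(\mathbf{f}_{\mathbf{i},\mathbf{a}}b')$. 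A short induction on $r$ using Lemma~\ref{lem:basic}(d) and the recursive definition of $\mathbf{d}(\mathbf{i},\cdot)$ shows $\mathbf{f}_{\mathbf{i},\mathbf{a}}b'\in\B_{\mathbf{i},\mathbf{a}}\cup\{0\}$, so the displayed identity yields $p_{\mathbf{i},\mathbf{a}}(\mathtt{f}_{\mathbf{i},\mathbf{a}}\B_H)\setminus\{0\}\subseteq\mathbf{k}^{\times}p_{\mathbf{i},\mathbf{a}}(\B_{\mathbf{i},\mathbf{a}})$. For the reverse inclusion, given $b\in\B_{\mathbf{i},\mathbf{a}}$ I set $b'=\mathbf{e}_{\mathbf{i},\mathbf{a}}(b)\in\B_H$; by Proposition~\ref{prop:hwcore}(d) and its proof, $\mathbf{f}_{\mathbf{i},\mathbf{a}}b'=b$, so the same identity together with $p_{\mathbf{i},\mathbf{a}}(b)\ne 0$ from (a) gives $p_{\mathbf{i},\mathbf{a}}(b)\in\mathbf{k}^{\times}\bigl(p_{\mathbf{i},\mathbf{a}}(\mathtt{f}_{\mathbf{i},\mathbf{a}}\B_H)\setminus\{0\}\bigr)$. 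Passing to $\mathbf{k}^{\times}$-orbits on both sides gives the equality in (b).

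For (c) and (d), rather than specialising to $\mathbf{a}=\mathbf{0}$ (which is slightly awkward in the finite-$r$ notation), I would compute $\sum_{(i,l)\in I^{\infty}}\mathtt{f}_{il}V$ directly. By Lemma~\ref{lem:basic}(b) with $n=1$ we have $\mathtt{f}_{il}V=\bigoplus_{b\in\mathbf{f}_{il}\B}\mathbf{k}\,b$, and by Lemma~\ref{lem:basic}(c) a basis vector $b$ lies in $\mathbf{f}_{il}\B$ precisely when $d_{il}(b)\ge 1$. Summing over all $(i,l)$ (each summand being the span of a subset of the basis $\B$), we get $\sum_{(i,l)}\mathtt{f}_{il}V=\bigoplus_{b\in\B\setminus\B_H}\mathbf{k}\,b$, since $\B_H$ is exactly the set of $b$ with $d_{il}(b)=0$ for all $(i,l)$. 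Hence $V_H=V\big/\bigoplus_{b\in\B\setminus\B_H}\mathbf{k}\,b$, and splitting $V=\bigl(\bigoplus_{b\in\B_H}\mathbf{k}\,b\bigr)\oplus\bigl(\bigoplus_{b\in\B\setminus\B_H}\mathbf{k}\,b\bigr)$ shows that $p_H$ restricts to a bijection from $\B_H$ onto a basis $p_H(\B_H)$ of $V_H$; this gives both (c) and (d).

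Since the real work has already been front-loaded into Proposition~\ref{prop:hwcore}, I expect no serious obstacle here. The only point requiring genuine care is the assertion in (b) that $\mathbf{f}_{\mathbf{i},\mathbf{a}}$ carries $\B_H$ into $\B_{\mathbf{i},\mathbf{a}}\cup\{0\}$; I would settle it by the induction on $r$ indicated above, tracking how each application of $\mathbf{f}_{i_kl_k}$ raises the relevant $d$-value by exactly one (Lemma~\ref{lem:basic}(d)) and using that $p_{\mathbf{i},\mathbf{a}}$ annihilates $V^{>\mathbf{i},\mathbf{a}}=\bigoplus_{b:\ \mathbf{d}(\mathbf{i},b)>\mathbf{a}}\mathbf{k}\,b$.
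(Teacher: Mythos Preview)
Your approach is exactly the one the paper intends: the corollary is stated without proof, as an immediate consequence of Proposition~\ref{prop:hwcore} and Lemma~\ref{lem:basic}, and your derivations of (a), (c), (d) from those results are clean and correct.

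There is one small imprecision in your treatment of (b). The assertion that $\mathbf{f}_{\mathbf{i},\mathbf{a}}b'\in\B_{\mathbf{i},\mathbf{a}}\cup\{0\}$ for $b'\in\B_H$ is not literally true: applying $\mathbf{f}_{i_k l_k}$ may increase $d_{i_j l_j}$ for earlier indices $j<k$ (nothing in Lemma~\ref{lem:basic}(d) prevents this), so one can only conclude $\mathbf{d}(\mathbf{i},\mathbf{f}_{\mathbf{i},\mathbf{a}}b')\ge\mathbf{a}$, not equality. For a concrete obstruction, take $(i_1,l_1)=(i_2,l_2)$ with $\mathbf{a}=(0,1,0,\ldots)$; then $\mathbf{f}_{\mathbf{i},\mathbf{a}}b'=\mathbf{f}_{i_1l_1}b'$ has $d_1=1\ne a_1=0$. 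This does not damage the argument, however: if $\mathbf{d}(\mathbf{i},\mathbf{f}_{\mathbf{i},\mathbf{a}}b')>\mathbf{a}$ then by Proposition~\ref{prop:hwcore}(c) the element lies in $V^{>\mathbf{i},\mathbf{a}}$, so $p_{\mathbf{i},\mathbf{a}}(\mathtt{f}_{\mathbf{i},\mathbf{a}}b')=c\,p_{\mathbf{i},\mathbf{a}}(\mathbf{f}_{\mathbf{i},\mathbf{a}}b')=0$ and the contribution is removed by the ``$\setminus\{0\}$''. With this correction (which your closing sentence already anticipates), the proof of (b) goes through as written.
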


\vskip 2mm

\section{Uniqueness of lower perfect graphs} \label{sec:unique}

\vskip 2mm

Let $V$ be a lower perfect space.
In this section, we will show that all the lower perfect graphs arising from lower perfect bases of $V$
are isomorphic.

\vskip 3mm

\begin{theorem} \label{thm:unique crystal}

	{\rm
		
		Let $V$ be a lower perfect space and let $\B$, $\B'$ be lower perfect bases of $V$.
		Assume that $p_{H}(\B_{H}) = p_{H}(\B'_{H})$.
		Then the following statements hold.
		
		\vskip 2mm
		
		\begin{enumerate}
			
			\item[(a)] There exists a crystal isomorphism
			\begin{equation*}
				\psi: \B  \overset{\sim} \longrightarrow \B' \ \ \text{such that} \ \
				p_{H}(b) = p_{H}(\psi(b)) \ \ \text{for all} \ b \in \B_{H}.
			\end{equation*}
			
			\vskip 2mm
			
			\item[(b)] For any $b \in \B$ and any good sequence $\mathbf{i} $,  we have
			\begin{equation*}
				\mathbf{d}(\mathbf{i}, b) = \mathbf{d}(\mathbf{i}, \psi(b)), \ \
				p_{H}(\mathbf{e}_{\mathbf{i}}^{\text{top}}(b))
				= p_{H}(\mathbf{e}_{\mathbf{i}}^{\text{top}} \psi(b)) \in p_{H}(\B_{H}).
			\end{equation*}
			
		\end{enumerate}
		
	}
\end{theorem}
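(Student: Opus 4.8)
The plan is to exploit the fact that the data built from the Kashiwara operators alone is intrinsic to the lower perfect space $(V,\{\mathtt f_{il}\})$: the subspace $F:=\sum_{(i,l)\in I^\infty}\mathtt f_{il}V$, the filtration pieces $V^{\ge\mathbf i,\mathbf a}$ and $V^{>\mathbf i,\mathbf a}$, the quotients $V_H$ and $V^{\ge\mathbf i,\mathbf a}/V^{>\mathbf i,\mathbf a}$, and the operators $\mathtt f_{\mathbf i,\mathbf a}$ and projections $p_{\mathbf i,\mathbf a}$, $p_H$ do not depend on the chosen lower perfect basis; a lower perfect basis only supplies, via Corollary~\ref{cor:lemma}, a distinguished basis of each of these graded quotients. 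Fix once and for all a good sequence of indices $\mathbf i$ and, for a lower perfect basis $\B$, set
\[
\Phi_{\B}\colon\ \B\longrightarrow \{\text{good sequences}\}\times V_H,\qquad b\longmapsto\bigl(\mathbf d(\mathbf i,b),\ p_H(\mathbf e^{\mathrm{top}}_{\mathbf i}(b))\bigr).
\]
By Proposition~\ref{prop:hwcore}(b) we have $b=\mathbf f_{\mathbf i,\mathbf d(\mathbf i,b)}\bigl(\mathbf e^{\mathrm{top}}_{\mathbf i}(b)\bigr)$ with $\mathbf e^{\mathrm{top}}_{\mathbf i}(b)\in\B_H$, and $p_H|_{\B_H}$ is injective (Corollary~\ref{cor:lemma}(c)); hence $\Phi_{\B}$ is injective. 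Write $S_{\B}$ for its image.

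The decisive step is the equality $S_{\B}=S_{\B'}$. Its engine is the \emph{killing lemma}: $\mathtt f_{\mathbf i,\mathbf a}(F)\subseteq V^{>\mathbf i,\mathbf a}$ for all good $\mathbf i,\mathbf a$. Indeed $F=\bigoplus_{c\in\B\setminus\B_H}\mathbf k\,c$ by Lemma~\ref{lem:basic}(b); for such a $c$, Proposition~\ref{prop:hwcore}(a) puts $\mathtt f_{\mathbf i,\mathbf a}(c)$ into $V^{>\mathbf i,\mathbf a}$ unless $\mathbf f_{\mathbf i,\mathbf a}(c)\neq 0$ and $\mathbf f_{\mathbf i,\mathbf a}(c)\notin V^{>\mathbf i,\mathbf a}$, i.e.\ $\mathbf f_{\mathbf i,\mathbf a}(c)\in\B_{\mathbf i,\mathbf a}$; but then $c=\mathbf e_{\mathbf i,\mathbf a}(\mathbf f_{\mathbf i,\mathbf a}(c))\in\B_H$ by Proposition~\ref{prop:hwcore}(d), a contradiction. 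Granting this, let $\xi\in p_H(\B_H)=p_H(\B'_H)$ and let $b_H\in\B_H$, $b'_H\in\B'_H$ be its unique preimages, so $b_H-b'_H\in F$ and $\mathtt f_{\mathbf i,\mathbf a}(b_H)\equiv\mathtt f_{\mathbf i,\mathbf a}(b'_H)\pmod{V^{>\mathbf i,\mathbf a}}$. On the other hand $(\mathbf a,\xi)\in S_{\B}$ iff $b_H\in\mathbf e_{\mathbf i,\mathbf a}(\B_{\mathbf i,\mathbf a})$, which by Proposition~\ref{prop:hwcore}(a),(c),(d) (together with the crystal axioms for $\mathbf e_{il},\mathbf f_{il}$) is equivalent to $\mathtt f_{\mathbf i,\mathbf a}(b_H)\notin V^{>\mathbf i,\mathbf a}$; by the congruence above this holds iff $(\mathbf a,\xi)\in S_{\B'}$. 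Hence $S_{\B}=S_{\B'}$, and $\psi:=\Phi_{\B'}^{-1}\circ\Phi_{\B}\colon\B\xrightarrow{\ \sim\ }\B'$ is a weight-preserving bijection. Unwinding $\Phi$ on the highest core shows $\psi|_{\B_H}$ is the $p_H$-matching, so $p_H(b)=p_H(\psi(b))$ for $b\in\B_H$ (part~(a)), and by construction $\mathbf d(\mathbf i,\psi(b))=\mathbf d(\mathbf i,b)$ and $p_H(\mathbf e^{\mathrm{top}}_{\mathbf i}\psi(b))=p_H(\mathbf e^{\mathrm{top}}_{\mathbf i}b)$ for this $\mathbf i$.

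It remains to verify that $\psi$ is an isomorphism of abstract crystals and that part~(b) holds for every good sequence. Given $(i,l)$ and $b$ with $\mathbf f_{il}b\neq 0$, compare the data of $b$ and $\mathbf f_{il}b$ along the prepended good sequence $(i,l)\cdot\mathbf i$: from $\mathbf e^{\mathrm{top}}_{il}(\mathbf f_{il}b)=\mathbf e^{\mathrm{top}}_{il}(b)$ and $d_{il}(\mathbf f_{il}b)=d_{il}(b)+1$ (Lemma~\ref{lem:basic}(d)) one gets $\Phi^{(i,l)\cdot\mathbf i}_{\B}(\mathbf f_{il}b)=\Phi^{(i,l)\cdot\mathbf i}_{\B}(b)+(e_1,0)$, with $e_1$ adding $1$ to the leading entry of the $\mathbf d$-part, and likewise in $\B'$; since $S_{\B}=S_{\B'}$ and $\Phi^{(i,l)\cdot\mathbf i}$ is injective, this forces $\mathbf f'_{il}\psi^{(i,l)\cdot\mathbf i}(b)\neq 0$ and $\psi^{(i,l)\cdot\mathbf i}(\mathbf f_{il}b)=\mathbf f'_{il}\psi^{(i,l)\cdot\mathbf i}(b)$. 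One then shows $\psi^{(i,l)\cdot\mathbf i}=\psi$ by downward induction on weight --- legitimate since $\mathrm{wt}(V)\subseteq\bigcup_j(\lambda_j-R_+)$ makes the relevant weight sets finite --- using that both maps agree with the $p_H$-matching on $\B_H$ and commute with the $\mathbf e_{jm}$ coming from the respective leading entries. Consequently $\psi$ commutes with every $\mathbf f_{il}$, hence with every $\mathbf e_{il}$ by crystal axiom~(c) of Definition~\ref{def: abstract crystal}; together with weight-preservation this gives $\varepsilon_i\circ\psi=\varepsilon_i$ (for $i\in I^{\mathrm{re}}$, $\varepsilon_i(b)=d_i(b)=\max\{n:\mathbf e_i^nb\neq 0\}$ by Lemma~\ref{lem:basic}(c); for $i\in I^{\mathrm{im}}$, $\varepsilon_i\equiv 0$) and $\varphi_i\circ\psi=\varphi_i$, so $\psi$ is a crystal isomorphism. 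Part~(b) for an arbitrary good sequence $\mathbf j$ then follows by re-running the construction with $\mathbf j$ in place of $\mathbf i$ and noting $\psi_{\mathbf j}=\psi$.

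The main obstacle is the equality $S_{\B}=S_{\B'}$ --- that the set of ``string data'' realized by a lower perfect basis is intrinsic to $V$ --- which hinges on the killing lemma $\mathtt f_{\mathbf i,\mathbf a}(F)\subseteq V^{>\mathbf i,\mathbf a}$ and on the identification of $\mathbf e_{\mathbf i,\mathbf a}(\B_{\mathbf i,\mathbf a})$ inside $\B_H$ provided by Proposition~\ref{prop:hwcore} and Corollary~\ref{cor:lemma}. The remaining point, that $\psi$ is independent of the auxiliary good sequence and therefore intertwines the Kashiwara operators, is more routine but requires the ``prepend $(i,l)$'' bookkeeping and the weight induction indicated above.
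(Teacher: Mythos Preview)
Your construction of the bijection $\psi^{\mathbf i}=\Phi_{\B'}^{-1}\circ\Phi_{\B}$ via the ``string data'' map, together with the killing lemma $\mathtt f_{\mathbf i,\mathbf a}(F)\subseteq V^{>\mathbf i,\mathbf a}$ and the resulting equality $S_{\B}=S_{\B'}$, is correct and elegant; this part genuinely differs from the paper, which instead matches $b\in\B_{\mathbf i,\mathbf a}$ to $b'\in\B'_{\mathbf i,\mathbf a}$ directly through the quotient $V^{\ge\mathbf i,\mathbf a}/V^{>\mathbf i,\mathbf a}$, obtaining the linear relation $b-cb'\in V^{>\mathbf i,\mathbf a}$ for some $c\in\mathbf k^\times$. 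In fact your killing lemma shows a posteriori that the two matchings coincide, so both routes produce the same $\psi^{\mathbf i}$.

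The gap is in the independence step. Your ``downward induction on weight'' to prove $\psi^{(i,l)\cdot\mathbf i}=\psi^{\mathbf i}$ does not close as written: the only commutations you have established are that $\psi^{(i,l)\cdot\mathbf i}$ intertwines $\mathbf e_{il}$, while $\psi^{\mathbf i}$ intertwines $\mathbf e_{i_1l_1}$. For $b\notin\B_H$ with $d_{il}(b)>0$ the induction reduces $\psi^{(i,l)\cdot\mathbf i}(b)$ to $(\mathbf f'_{il})^{d_{il}(b)}\bigl(\psi^{\mathbf i}(\mathbf e_{il}^{\,\mathrm{top}}b)\bigr)$, but there is no a priori reason this equals $\psi^{\mathbf i}(b)$: that would require $d'_{il}(\psi^{\mathbf i}(b))=d_{il}(b)$, which is exactly the content of the independence you are trying to prove. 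The case $d_{il}(b)=0$ can be handled by your $S_{\B}=S_{\B'}$ argument, but the positive case remains circular.

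The paper sidesteps this by exploiting the linear relation $b-cb'\in V^{>\mathbf i,\mathbf d(\mathbf i,b)}$ (which, as noted, your killing lemma already yields): writing $v=b-cb'$ as a combination of $\B\setminus\{b\}$, one compares $\mathbf d(\mathbf i',b)$ and $\mathbf d(\mathbf i',b')$ for an arbitrary second good sequence $\mathbf i'$ and derives a contradiction from $\mathbf d(\mathbf i',b)\neq\mathbf d(\mathbf i',b')$ using Proposition~\ref{prop:hwcore}(c). This immediately gives independence of $\psi$ from $\mathbf i$, and then your prepending argument \emph{does} work to show $\psi$ commutes with every $\mathbf f_{il}$. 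In short, you have all the ingredients; you only need to replace the weight induction by the linear-algebraic contradiction argument using $b-cb'\in V^{>\mathbf i,\mathbf a}$.
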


\vskip 2mm

\begin{proof} \ Let $\mathbf{i} = ((i_k, l_k))_{k \ge 1}$ be a good sequence of indices
	and let $\mathbf{a} = (a_k)_{k \ge 1}$ be a good sequence of non-negative integers.
	
	\vskip 3mm
	
	Let
	\begin{equation*}
		p_{\mathbf{i}, \mathbf{a}}: V^{\ge \mathbf{i}, \mathbf{a}} \longrightarrow
		V^{\ge \mathbf{i}, \mathbf{a}} \big/ V^{> \mathbf{i}, \mathbf{a}}
	\end{equation*}
	be the canonical projection and set
	\begin{equation*}
		\B_{\mathbf i, \mathbf a} = \{ b \in \B \mid \mathbf{d}(\mathbf{i}, b) = \mathbf{a} \},
		\ \ \B'_{\mathbf i, \mathbf a} = \{ b' \in \B' \mid \mathbf{d}(\mathbf{i}, b') = \mathbf{a} \}.
	\end{equation*}
	
	\vskip 3mm
	
	Then Corollary \ref{cor:lemma} yields:
	
	\vskip 2mm
	
	\begin{enumerate}
		
		\item[(a)] $\mathbf{k}^{\times} p_{\mathbf{i}, \mathbf{a}} (\B_{\mathbf{i}, \mathbf{a}})
		= \mathbf{k}^{\times} p_{\mathbf{i}, \mathbf{a}} (\B'_{\mathbf{i}, \mathbf{a}})$.
		
		\vskip 2mm
		
		\item[(b)] Both $p_{\mathbf{i}, \mathbf{a}}(\B_{\mathbf{i}, \mathbf{a}})$
		and $p_{\mathbf{i}, \mathbf{a}}(\B'_{\mathbf{i}, \mathbf{a}})$ are bases of
		$V^{\ge \mathbf{i}, \mathbf{a}} \big/ V^{> \mathbf{i}, \mathbf{a}}$.
		
	\end{enumerate}
	
	\vskip 3mm
	
	Hence for any $b \in \B$, there exists $b' \in \B'$
	and $c \in \mathbf{k}^{\times}$ such that
	\begin{equation} \label{eq:B to B'}
		\mathbf{d}(\mathbf{i}, b) = \mathbf{d}(\mathbf{i}, b')
		\ \ \text{and}
		\ \  b - c \, b' \in V^{> \mathbf{i}, \mathbf{a}} = V^{> \mathbf{i}, \mathbf{d}(\mathbf{i}, b)}.
	\end{equation}
	
	\vskip 2mm
	
	We will prove \eqref{eq:B to B'} still holds for any choice of good sequence $\mathbf{i}'$
	with the same $b'$ so that we can define $\psi(b) = b'$.
	
	\vskip 3mm
	
	Set $v = b - c b' \in V^{> \mathbf{i}, \mathbf{d}(\mathbf{i}, b)}$. Then the vector $v$ is a linear combination
	of elements in $\B \setminus \{b\}$.
	
	\vskip 3mm
	
	Let $\mathbf{c} = \mathbf{d}(\mathbf{i}', b)$ and
	$\mathbf{c}' = \mathbf{d}(\mathbf{i}', b')$.
	Then $b \in V^{\ge \mathbf{i}', \mathbf{c}}$ and $b' \in V^{\ge \mathbf{i}', \mathbf{c}'}$.
	If $\mathbf{c} < \mathbf{c}'$, then $b' \in V^{> \mathbf{i}', \mathbf{c}}$ and hence
	$v - b \in V^{> \mathbf{i}', \mathbf{c}}$.
	Thus $v - b$ is a linear combination of elements in $\B \setminus \{b\}$.
	But since $v$ is also a linear combination of elements in $\B \setminus \{b\}$,
	we get a contradiction.
	
	\vskip 3mm
	
	On the other hand, if $\mathbf{c} > \mathbf{c}'$, by a similar argument, we also get a contradiction.
	Therefore we must have $\mathbf{c} = \mathbf{c}'$.
	
	\vskip 3mm
	
	It follows that both $p_{\mathbf{i}', \mathbf{c}}(b)$ and $p_{\mathbf{i}', \mathbf{c}}(b')$
	belongs to  $\mathbf{k}^{\times}  p_{\mathbf{i}', \mathbf{c}} (\B_{\mathbf{i}, \mathbf{c}})$.
	If $\mathbf{k}^{\times} p_{\mathbf{i}', \mathbf{c}}(b) \neq
	\mathbf{k}^{\times} p_{\mathbf{i}', \mathbf{c}}(b')$,
	then $v-b$ is a linear combination of the elements in $\B \setminus \{b\}$, which is a
	contradiction.
	Hence  $\mathbf{k}^{\times} p_{\mathbf{i}', \mathbf{c}}(b) =
	\mathbf{k}^{\times} p_{\mathbf{i}', \mathbf{c}}(b')$ and
	\eqref{eq:B to B'} also holds for $\mathbf{i}'$.
	
	\vskip 3mm
	
	By defining $\psi(b) = b'$, we can immediately deduce our assertions.
\end{proof}

\vskip 3mm

By Proposition \ref{prop:exist lps}, Proposition \ref{prop:lower crystal} and Theorem \ref{thm:unique crystal}, we have the following corollary.

\begin{corollary} \label{cor:lower crystal} \hfill
	
	\vskip 2mm
	
	{\rm
		
		(a) Every lower perfect graph of $U_{q}^{-}(\g)$ is isomorphic to $B(\infty)$.
		
		\vskip 2mm
		
		(b) For $\lambda \in P^{+}$, every  lower perfect graph of $V(\lambda)$ is isomorphic to $B(\lambda)$.

	}
\end{corollary}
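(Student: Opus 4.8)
The plan is to obtain this corollary as a formal consequence of the three results just established, so there is essentially no new content beyond verifying that Theorem~\ref{thm:unique crystal} is applicable in the two cases at hand. By Proposition~\ref{prop:exist lps} the spaces $U_q^-(\g)$ and $V(\lambda)$ are lower perfect spaces carrying the distinguished lower perfect bases $\B(\infty)$ and $\B(\lambda)$, and by Proposition~\ref{prop:lower crystal} the lower perfect graphs of these two bases are isomorphic to the crystals $B(\infty)$ and $B(\lambda)$ respectively. Hence, given an arbitrary lower perfect basis $\B$ of $U_q^-(\g)$ (resp.\ of $V(\lambda)$), it suffices to exhibit a crystal isomorphism between the lower perfect graph of $\B$ and that of $\B(\infty)$ (resp.\ $\B(\lambda)$), and for this I would invoke Theorem~\ref{thm:unique crystal} with $\B' = \B(\infty)$ (resp.\ $\B' = \B(\lambda)$).

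The single hypothesis of Theorem~\ref{thm:unique crystal} that must be arranged is $p_H(\B_H) = p_H(\B'_H)$, and the crux is that in both cases the quotient $V_H = V/\sum_{(i,l)\in I^{\infty}}\widetilde f_{il}V$ is one-dimensional. For $V = U_q^-(\g)$ this follows from the crystal description of $B(\infty)$: every element of $B(\infty)$ other than $\mathbf 1 + qL(\infty)$ is of the form $\widetilde f_{il}(\,\cdot\,)$, so by the identity $\mathbf f_{il}G(b'') = G(\widetilde f_{il}b'')$ used in the proof of Proposition~\ref{prop:exist lps} every basis vector $G(b)\neq \mathbf 1$ lies in $\widetilde f_{il}U_q^-(\g)$ for some $(i,l)$; since $\mathbf 1$ has weight $0$ it cannot lie in the image of any $\widetilde f_{il}$, whence $V_H = \Q(q)\,\overline{\mathbf 1}$. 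The identical argument with $v_\lambda$ in place of $\mathbf 1$ gives $V(\lambda)_H = \Q(q)\,\overline{v_\lambda}$. Consequently Corollary~\ref{cor:lemma}(c),(d) forces the highest core $\B_H$ of any lower perfect basis $\B$ to consist of a single vector which, being weight-homogeneous and not annihilated by $p_H$, must be a nonzero scalar multiple of $\mathbf 1$, resp.\ of $v_\lambda$.

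To conclude, I would rescale: replacing the unique vector of $\B_H$ by $\mathbf 1$ (resp.\ $v_\lambda$) produces a lower perfect basis $\widetilde\B$ with $p_H(\widetilde\B_H) = p_H(\B(\infty)_H)$ (resp.\ $= p_H(\B(\lambda)_H)$), and since all the conditions of Definition~\ref{def:lower perfect basis} are invariant under rescaling of an individual basis vector, the bijection $\B \to \widetilde\B$ that changes only that one vector is a crystal isomorphism of lower perfect graphs. Theorem~\ref{thm:unique crystal} then applies to the pair $\widetilde\B, \B(\infty)$ (resp.\ $\widetilde\B, \B(\lambda)$) and, composed with the isomorphism of Proposition~\ref{prop:lower crystal}, gives the desired isomorphism of the lower perfect graph of $\B$ with $B(\infty)$ (resp.\ $B(\lambda)$); part (b) is word-for-word the same as part (a) with $\lambda, v_\lambda, B(\lambda)$ in place of $\infty, \mathbf 1, B(\infty)$. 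I expect the only genuinely substantive point to be the one-dimensionality of $V_H$ (equivalently, matching up the $p_H$-hypothesis of Theorem~\ref{thm:unique crystal}); the scale-invariance of the perfect-basis axioms is immediate from Definition~\ref{def:lower perfect basis}, and everything else is bookkeeping.
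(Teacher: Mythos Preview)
Your proposal is correct and follows the same approach as the paper, which simply cites Proposition~\ref{prop:exist lps}, Proposition~\ref{prop:lower crystal}, and Theorem~\ref{thm:unique crystal} without further elaboration. You are in fact more thorough than the paper in explicitly verifying the hypothesis $p_H(\B_H) = p_H(\B'_H)$ of Theorem~\ref{thm:unique crystal} via the one-dimensionality of $V_H$ and a rescaling argument, a point the paper leaves implicit.
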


\vskip 3mm

\begin{example}
	{\rm
		Let $\B_{\Q}$ be the primitive canonical basis \cite{FHKK} of $U_{q}^{-}(\g)$
		and let $\beta \in (\B_{\Q})_{-\alpha}$ with $\alpha \in R_{+}$.
		Then there exist $(i_1, l_1) \in I^{\infty}$, $\mathbf{c}_{1} \vdash l_1$
		and $\beta_{1} \in (\B_{\Q})_{-\alpha + l_1 \alpha_{i_1}; i_1, 0}$ such that
		\begin{equation*}
			{\mathtt b}_{i_1, \mathbf{c}_1} \beta_1 - \beta \in
			\bigoplus_{\beta' \in (\B_{\Q})_{-\alpha; i_1, \ge l_1 + 1}} \A_{\Q} \, \beta'.
		\end{equation*}
		
		\vskip 2mm
		
		If $\beta_1 \neq \mathbf{1}$, then there exist
		$(i_2, l_2) \in I^{\infty}$, $\mathbf{c}_{2} \vdash l_2$
		and $\beta_{2} \in (\B_{\Q})_{-\alpha + l_1 \alpha_{i_1} +  l_2 \alpha_{i_2}; i_2, 0}$ such that
		\begin{equation*}
			{\mathtt b}_{i_2, \mathbf{c}_2} \beta_2 - \beta_1 \in
			\bigoplus_{\beta'' \in (\B_{\Q})_{-\alpha + l_1 \alpha_{i_1} ; i_2, \ge l_2 + 1}} \A_{\Q} \, \beta''.
		\end{equation*}
		
		\vskip 2mm
		
		Continuing this process, we obtain a sequence
		\begin{equation*}
			(i_1,\mathbf c_1),\ (i_2,\mathbf c_2),\ \cdots,\ (i_r,\mathbf c_r),
		\end{equation*}
		such that
		\begin{equation*}
			\beta\stackrel{\mathtt b_{i_1,\mathbf c_1}}{\longleftarrow}\beta_1\stackrel{\mathtt b_{i_2,\mathbf c_2}}{\longleftarrow}\beta_2\longleftarrow\cdots
			\longleftarrow
			\beta_{r-1}\stackrel{\mathtt b_{i_r,\mathbf c_r}}{\longleftarrow}\mathbf 1.
		\end{equation*}
		
		\vskip 2mm
		
		Define
		\begin{equation*}
			b_{\beta}:= \widetilde{f}_{i_1, \mathbf{c}_1} \cdots \widetilde{f}_{i_r, \mathbf{c}_r} \mathbf{1} \in B(\infty).
		\end{equation*}
		Then we obtain a map
		\begin{equation*}
			\phi:\B_{\Q} \longrightarrow \B(\infty)  \ \ \text{given by} \ \
			\beta \longmapsto G(b_{\beta}).
		\end{equation*}
		
		\vskip 2mm
		
		
		By reversing the above process, we obtain the inverse of $\phi$, which would yield a precise 1-1 correspondence between $\mathbf B_{\mathbf Q}$ and $\mathbf B(\infty)$.
		Since the primitive canonical 
		basis $\B_{\Q}$  coincide with the lower global basis  $\B(\infty)$(cf. \cite{FHKK}), we may define the maps on $\B_{\Q}$ by
		\begin{equation*}
			\begin{aligned}
				& \mathbf{f}_{il} \beta = \mathbf{f}_{il}G(b_{\beta}), \ \ \mathbf{e}_{il} \beta = \mathbf{e}_{il} G(b_{\beta}), \\
				& \text{wt} (\beta) = \text{wt}(b_{\beta}), \ \
				\varepsilon_{i}(\beta) = \varepsilon_{i}(b_{\beta}), \ \
				\varphi_{i}(\beta) = \varphi_{i}(b_{\beta}).
			\end{aligned}
		\end{equation*}
		
		\vskip 2mm
		
		It is straightforward to see that the lower perfect graph of $\B_{\Q}$ is isomorphic to $B(\infty)$.

	}
\end{example}

\vskip 2mm

\section{Upper perfect bases} \label{sec:upper}

\vskip 2mm

In this subsection, we will define the notion of upper perfect bases and investigate
their properties.

\vskip 3mm

\begin{definition} \label{def:upper}
	{\rm
		Let $V = \bigoplus_{\mu \in P} V_{\mu}$ be a $P$-graded vector space and let $\{\mathtt{e}_{il}\}_{(i,l)\in I^{\infty}}$ be a family of endomorphisms of $V$.
		We say that $(V, \{\mathtt{e}_{il}\}_{(i,l)\in I^{\infty}})$ is a {\it weak upper perfect space}
		if
		
		\begin{enumerate}
			
			\item[(i)] $\dim V_{\mu} < \infty$ for all $\mu \in P$,
			
			\vskip 2mm
			
			\item[(ii)] there exist finitely many  $\lambda_1, \ldots, \lambda_s \in P$ such that
			$\text{wt}(V) \subset \bigcup_{j=1}^s (\lambda_j - R_{+})$,
			
			\vskip 2mm
			
			\item[(iii)]
			${\mathtt e}_{il} (V_{\mu}) \subset V_{\mu + l \alpha_{i}} \ \ \text{for all} \
			\mu \in P, \ (i,l) \in I^{\infty}$.
			
		\end{enumerate}
		
	}
\end{definition}

\vskip 2mm

For $v \in V \setminus \{0\}$, define
\begin{equation*}
	d_{il}^{\vee} = \max\{n \in \Z_{\ge 0} \mid {\mathtt e}_{il}^n v \neq 0 \}
	= \min \{n \in \Z_{\ge 0} \mid {\mathtt e}_{il}^{n+1} v = 0\}.
\end{equation*}

\vskip 2mm

By the Definition of $d_{il}^\vee$, it is easy to check the following lemma holds.

\begin{lemma}
{\rm	For a given $k\in\mathbf Z_{\geq 0}$, the space 
	\begin{equation}\label{eq:Vik}
		V_{il}^{<k}:=\{v\in V\mid d_{il}^{\vee}(v)<k\}=\{v\in V\mid \mathtt e_{il}^k(v)=0\}
	\end{equation}
	is a $\mathbf k$-subspace of $V$.}
\end{lemma}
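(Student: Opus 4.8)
The plan is to identify $V_{il}^{<k}$ with the kernel of the $k$-th power $\mathtt e_{il}^{k}=\mathtt e_{il}\circ\cdots\circ\mathtt e_{il}$ (with $k$ factors) of the endomorphism $\mathtt e_{il}$ of $V$, which is automatically a $\mathbf k$-subspace; the only thing one must check is that the two set-theoretic descriptions in the statement really coincide.

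First I would record the elementary monotonicity property: if $\mathtt e_{il}^{m}v=0$ for some $m\ge 0$, then $\mathtt e_{il}^{m+j}v=\mathtt e_{il}^{j}(\mathtt e_{il}^{m}v)=0$ for all $j\ge 0$. Consequently, for a nonzero $v$ the set $\{\,n\in\Z_{\ge 0}\mid \mathtt e_{il}^{n}v\neq 0\,\}$ is an initial segment of $\Z_{\ge 0}$ containing $0$ (since $\mathtt e_{il}^{0}v=v\neq 0$), so the two formulas for $d_{il}^{\vee}(v)$ displayed in the excerpt agree, and moreover $d_{il}^{\vee}(v)<k$ holds if and only if $\mathtt e_{il}^{k}v=0$. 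For $v=0$ I would adopt the convention $d_{il}^{\vee}(0)=-\infty$; then $0$ lies in both sides for every $k\ge 0$, which also covers the degenerate case $k=0$, where each side reduces to $\{0\}$. This establishes the set-theoretic equality $\{\,v\mid d_{il}^{\vee}(v)<k\,\}=\{\,v\in V\mid \mathtt e_{il}^{k}v=0\,\}$ claimed in the lemma.

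It then only remains to observe that $\mathtt e_{il}^{k}$, being a composite of the $\mathbf k$-linear endomorphisms $\mathtt e_{il}$ of $V$, is itself a $\mathbf k$-linear map $V\to V$, whence $\{\,v\in V\mid \mathtt e_{il}^{k}v=0\,\}=\Ker(\mathtt e_{il}^{k})$ is a $\mathbf k$-subspace of $V$ (in fact a $P$-graded one, since $\mathtt e_{il}$ is homogeneous, though that is not needed here). There is no genuine obstacle in this argument; the only points deserving a word of care are the convention for $d_{il}^{\vee}$ on the zero vector and the degenerate value $k=0$, both handled as above.
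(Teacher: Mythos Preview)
Your argument is correct and is exactly the kind of verification the paper has in mind: the text simply asserts that the lemma is easy to check from the definition of $d_{il}^{\vee}$, and your identification of $V_{il}^{<k}$ with $\Ker(\mathtt e_{il}^{k})$ via the monotonicity of vanishing is precisely that check. Your handling of the conventions for $v=0$ and $k=0$ is a reasonable way to make the statement literally true, since the paper only defines $d_{il}^{\vee}(v)$ for $v\neq 0$.
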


\vskip 1mm

\begin{definition} \label{def:upper perfect basis} 
	{\rm
		Let $(V, \{{\mathtt e}_{il}\}_{(i,l) \in I^{\infty}})$ be a weak upper perfect space.
		A basis $\mathbb{B}$ of $V$ is called an {\it upper perfect basis} if

\vskip 2mm		
		\begin{enumerate}
			
			\item[(a)] $\mathbb{B} = \sqcup_{\mu \in P} \mathbb{B}_{\mu}$, where
			$\mathbb{B}_{\mu} = \mathbb{B} \cap V_{\mu}$,
			
			\vskip 2mm
			
			\item[(b)] for any $(i,l) \in I^{\infty}$, there exists a map
			$\mathbf{E}_{il}: \mathbb{B} \rightarrow \mathbb{B} \cup \{0\}$ such that
			for all $b \in \mathbb{B}$ there exists $c \in \mathbf{k}^{\times}$ satisfying
		\vskip 2mm	
			\begin{enumerate}
				
				\item[(i)] $\mathtt{e}_{il}(b) - c \, \mathbf{E}_{il}(b) \in V_{il}^{^{<d_{il}^\vee(b)-1}} $,
		
		\vskip 2mm		
				\item[(ii)] if $d_{il}^{\vee}(b) = 0$, then $\mathbf{E}_{il}(b) = 0$,
		
		\vskip 2mm		
				\item[(iii)] if $d_{il}^{\vee}(b) >0$, then $\mathbf{E}_{il}(b) \in \mathbb{B}$.
				
			\end{enumerate}
			
			\vskip 2mm
			
			\item[(c)] $\mathbf{E}_{il}(b) = \mathbf{E}_{il}(b')$ for all $(i,l) \in I^{\infty}$ implies $b = b'$.
			
		\end{enumerate}

	}
\end{definition}

\vskip 3mm

A $P$-graded vector space $V = \bigoplus_{\mu \in P} V_{\mu}$ is called an {\it upper perfect
	space} if it has an upper perfect basis.
Let $\mathbb{B}$ be an upper perfect basis of $V$.
We define the maps $\mathbf{F}_{il}: \mathbb{B} \rightarrow \mathbb{B}\cup \{0\}$,
$\text{wt}:\mathbb{B} \rightarrow P$, $\varepsilon_{i}, \varphi_{i}:\mathbb{B} \rightarrow \Z\cup \{-\infty\}$
by

\begin{equation*}
	\begin{aligned}
		& \mathbf{F}_{il}(b) = \begin{cases} b' \ \ & \text{if} \ b = \mathbf{E}_{il}(b'), \\
			0 \ \ & \text{if} \ b \notin \mathbf{E}_{il}(\mathbb{B}),
		\end{cases} \\
		& \text{wt}(b) = \begin{cases} \mu \ \ & \text{if} \ b \in \mathbb B_{\mu}, \\
			0 \ \ & \text{if} \ b \notin \mathbb B_\mu,
		\end{cases} \\
		& \varepsilon_{i}(b) = \begin{cases} d_{i}^{\vee}(b) \ \ & \text{if} \ i \in I^{\text{re}}, \\
			0 \ \ & if \ i \in I^{\text{im}},
		\end{cases} \\
		& \varphi_{i}(b) = \varepsilon_{i}(b) + \langle h_{i}, \text{wt}(b) \rangle.
	\end{aligned}
\end{equation*}

\vskip 2mm

Then $(\mathbb{B}, \mathbf{E}_{il}, \mathbf{F}_{il}, \text{wt}, \varepsilon_{i}, \varphi_{i})$
is an abstract crystal, called the {\it upper perfect graph} of $\mathbb{B}$.

\vskip 4mm

\begin{lemma} \label{lem:lwcore} 
	{\rm
		
		Let $(V, \{\mathtt{e}_{il}\}_{(i,l)\in I^{\infty}})$ be an upper perfect space with an
		upper perfect basis $\mathbb{B}$.
		Then the following statements hold.
		
		\vskip 2mm
		
		\begin{enumerate}
			\item[(a)]	For $b\in\mathbb B$, $k\geq 1$, we have
			\begin{equation*}
				\mathtt e_{il}^k(b)-c\mathbf E_{il}^k(b)\in V_{il}^{<d_{il}^\vee(b)-k}.
			\end{equation*}
			
			\item[(b)] For any $k \in \Z_{\ge 0}$, we have
			\begin{equation*}
				\ker{\mathtt e}_{il}^k = \bigoplus_{b \in \mathbb{B} \atop d_{il}^{\vee}(b)<k} \mathbf{k}b.
			\end{equation*}
			
			\item[(c)] For any $k \in \Z_{\ge 0}$ and $(i, l) \in I^{\infty}$, the image of
			$\{b \in \mathbb{B} \mid d_{il}^{\vee}(b) = k\}$ is a basis of
			$\ker \mathtt{e}_{il}^{k+1} \big/ \ker \mathtt{e}_{il}^{k}$.
			
		\end{enumerate}
		
	}
\end{lemma}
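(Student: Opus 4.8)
The plan is to follow the pattern of Lemma~\ref{lem:basic}, passing from the defining relation (b)(i) of an upper perfect basis to its iterates and reading off the structure of the kernels $\ker\mathtt e_{il}^{k}=V_{il}^{<k}$. I would first record that $d_{il}^{\vee}(\mathbf E_{il}(b))=d_{il}^{\vee}(b)-1$ whenever $\mathbf E_{il}(b)\ne 0$: applying $\mathtt e_{il}^{d_{il}^{\vee}(b)-1}$ to the relation $\mathtt e_{il}(b)-c\,\mathbf E_{il}(b)\in V_{il}^{<d_{il}^{\vee}(b)-1}$ of Definition~\ref{def:upper perfect basis}(b) kills the error term and leaves $c\,\mathtt e_{il}^{d_{il}^{\vee}(b)-1}(\mathbf E_{il}(b))=\mathtt e_{il}^{d_{il}^{\vee}(b)}(b)\ne 0$, while applying $\mathtt e_{il}^{d_{il}^{\vee}(b)}$ gives $0$. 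Iterating, and using Definition~\ref{def:upper perfect basis}(b)(ii),(iii), one gets $\mathbf E_{il}^{j}(b)\in\mathbb B$ with $d_{il}^{\vee}=d_{il}^{\vee}(b)-j$ for $0\le j\le d_{il}^{\vee}(b)$ and $\mathbf E_{il}^{j}(b)=0$ for $j>d_{il}^{\vee}(b)$. Throughout I use \eqref{eq:Vik}, in particular that $\mathtt e_{il}(V_{il}^{<m})\subseteq V_{il}^{<m-1}$ and that $V_{il}^{<m}=0$ for $m\le 0$.

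For (a) I would induct on $k$, with $k=1$ being Definition~\ref{def:upper perfect basis}(b)(i). For the step, apply $\mathtt e_{il}$ to the relation for $k-1$ (which carries its error term into $V_{il}^{<d_{il}^{\vee}(b)-k}$) and then, when $\mathbf E_{il}^{k-1}(b)\ne 0$, apply (b)(i) to $\mathbf E_{il}^{k-1}(b)$, whose $d_{il}^{\vee}$ is $d_{il}^{\vee}(b)-(k-1)$; composing the two relations gives the statement with a new unit scalar. The ranges $k=d_{il}^{\vee}(b)+1$ and $k>d_{il}^{\vee}(b)+1$ are handled separately and trivially, as there $\mathtt e_{il}^{k}(b)$, $\mathbf E_{il}^{k}(b)$, and $V_{il}^{<d_{il}^{\vee}(b)-k}$ all vanish.

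For (b), the inclusion $\bigoplus_{d_{il}^{\vee}(b)<k}\mathbf k\,b\subseteq\ker\mathtt e_{il}^{k}$ is immediate from \eqref{eq:Vik}. For the reverse, I would work one weight space at a time: $V_{\mu}$ is finite-dimensional, $\mathbb B_{\mu}$ is finite, $d_{il}^{\vee}$ is bounded on $\mathbb B_{\mu}$, and the chain $0=\ker\mathtt e_{il}^{0}\subseteq\ker\mathtt e_{il}^{1}\subseteq\cdots$ stabilizes at $V_{\mu}$. By (a), a basis vector $b\in\mathbb B_{\mu}$ with $d_{il}^{\vee}(b)=k$ satisfies $\mathtt e_{il}^{k}(b)\in\mathbf k^{\times}\,\mathbf E_{il}^{k}(b)$, so it lies in $\ker\mathtt e_{il}^{k+1}$ but not in $\ker\mathtt e_{il}^{k}$; comparing $\sum_{k\ge 1}\dim\bigl(\ker\mathtt e_{il}^{k}/\ker\mathtt e_{il}^{k-1}\bigr)_{\mu}=\dim V_{\mu}=\#\mathbb B_{\mu}=\sum_{k\ge 1}\#\{b\in\mathbb B_{\mu}:d_{il}^{\vee}(b)=k-1\}$, it suffices to prove for each $k$ that the classes of $\{b\in\mathbb B_{\mu}:d_{il}^{\vee}(b)=k-1\}$ in $\bigl(\ker\mathtt e_{il}^{k}/\ker\mathtt e_{il}^{k-1}\bigr)_{\mu}$ are linearly independent; by (a) a dependence among them is a nontrivial linear relation among the basis vectors $\mathbf E_{il}^{k-1}(b)\in\mathbb B$, which is excluded once $\mathbf E_{il}$ is injective on each $d_{il}^{\vee}$-level set. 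Finally (c) is formal from (b): both $\ker\mathtt e_{il}^{k+1}$ and $\ker\mathtt e_{il}^{k}$ are spanned by the basis vectors they contain, and the ones in the former but not the latter are exactly those with $d_{il}^{\vee}=k$.

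The step I expect to be the main obstacle is establishing that $\mathbf E_{il}$ is injective on each level set $\{b\in\mathbb B:d_{il}^{\vee}(b)=n\}$. In the lower-perfect analogue, Lemma~\ref{lem:basic}(b), $\mathtt f_{il}^{n}V$ is an image, so only spans of subspaces (not their dimensions) need be compared and no such injectivity is required; here $\ker\mathtt e_{il}^{k}$ is a kernel, so this injectivity genuinely has to be proved — drawing on condition (c) of Definition~\ref{def:upper perfect basis} together with the weight grading — and is not a formal consequence of the relation (b)(i) alone.
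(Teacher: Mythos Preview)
Your argument for (a) is exactly the paper's: induct on $k$, apply $\mathtt e_{il}$ to the inductive hypothesis (using $\mathtt e_{il}(V_{il}^{<m})\subseteq V_{il}^{<m-1}$), then apply Definition~\ref{def:upper perfect basis}(b)(i) to $\mathbf E_{il}^{k-1}(b)$ and combine. The paper records only the inequality $d_{il}^{\vee}(\mathbf E_{il}(b))\le d_{il}^{\vee}(b)-1$ where you first prove equality, but either version suffices for the induction. For (b) and (c) the paper is extremely terse (it says only ``(b) follows from \eqref{eq:Vik} and (c) follows from (b)''), so your dimension-count is really a fleshing-out of what the paper leaves implicit; the approaches are the same.

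On the obstacle you flag: the injectivity of $\mathbf E_{il}$ on nonzero values is exactly the intended content of condition~(c) in Definition~\ref{def:upper perfect basis}, not something to be derived. The paper's phrasing is admittedly ambiguous, but observe that immediately after the definition the paper sets $\mathbf F_{il}(b)=b'$ whenever $b=\mathbf E_{il}(b')$; this assignment is well-defined only if, for each fixed $(i,l)$, the relation $\mathbf E_{il}(b')=\mathbf E_{il}(b'')\ne 0$ forces $b'=b''$. That is the standard formulation of the axiom (as in \cite{BeK,KOP11,KKKS}), and under it your level-set injectivity is immediate, iterating one step at a time---no separate appeal to the weight grading is needed. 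Conversely, under the weak reading ``$\mathbf E_{il}(b)=\mathbf E_{il}(b')$ for \emph{all} $(i,l)$ simultaneously implies $b=b'$'', statement (b) can actually fail (take two basis vectors of the same weight mapped by $\mathtt e_{il}$ to the same element but distinguished by some other $\mathtt e_{jl'}$), so the weight grading alone cannot rescue that reading. With condition~(c) read in the intended way, your proof is complete and matches the paper's.
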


\begin{proof}
	For (a), we use induction on $k$. 
	If $k=1$, then our assertion follows from Definition \ref{def:upper perfect basis} (b).
	We assume 
	\begin{equation}\label{eq:induction}
		\mathtt e_{il}^{k-1}(b)-c\mathbf E_{il}^{k-1}(b)\in V_{il}^{<d_{il}^\vee(b)-k+1}.
	\end{equation}
	
	By the definition of $d^\vee_{il}$, we have $d_{il}^\vee(\mathtt e_{il}v)=d_{il}^\vee(v)-1$. Therefore, we have 
	
	\begin{equation}\label{eq:eilV}
		\mathtt e_{il}V_{il}^{d_{il}^\vee(b)-k+1}\subset V_{il}^{d_{il}^\vee(b)-k}.
	\end{equation}
	
	By \eqref{eq:induction} and \eqref{eq:eilV}, we have
	\begin{equation}\label{eq:eilkb}
		\mathtt e_{il}^{k}(b)-c\mathbf e_{il}\mathbf E_{il}^{k-1}(b)\in V_{il}^{\ell_{il}(b)-k}.
	\end{equation}
	
	By Definition \ref{def:upper perfect basis}, we have
	\begin{equation}\label{eq:eilEil}
		\mathtt e_{il}(\mathbf E_{il}^{k-1}(b))-c'\mathbf E_{il}^k(b)\in V_{il}^{d_{il}^\vee(\mathbf E_{il}^{k-1}(b))-1}.
	\end{equation}

\vskip 2mm
	
	Again by the definition of $d^\vee_{il}$, we have $d_{il}^\vee(\mathbf E_{il}(v))\leq d_{il}^\vee(v)-1$.

\vskip 2mm
	
	Then we obtain
	\begin{equation*}
		d_{il}^\vee(\mathbf E_{il}^k(b))
		\leq d_{il}^{\vee}(\mathbf E_{il}^{k-1}(b))-1\leq \cdots\leq d_{il}^\vee(b)-k.
	\end{equation*}

\vskip 2mm
	
	By the definition of $V_i^{<k}$, we have
	\begin{equation*}
		V_{il}^{d_{il}^\vee(\mathbf E_{il}^{k-1}(b))-1}\subset V_{il}^{d_{il}^\vee(\mathbf E_{il}^{k-2}(b))-2}\subset \cdots \subset V_{il}^{d_{il}^\vee(b)-k}.
	\end{equation*}
	
	Then from \eqref{eq:eilEil}, we have
	\begin{equation}\label{eq:eilEilk-1}
		\mathtt e_{il}(\mathbf E_{il}^{k-1}(b))-c'\mathbf E_{il}^k(b)\in V_{il}^{d_{il}^\vee(b)-k}.
	\end{equation}
	
	Combining \eqref{eq:eilkb} and \eqref{eq:eilEilk-1}, we get the desired result.

\vskip 2mm
	
	The assertion (b) follows from \eqref{eq:Vik} and (c) follows from (b).
\end{proof}

\vskip 3mm

Now we will discuss the connection between lower perfect bases and upper perfect bases.
Let $V = \bigoplus_{\mu \in P} V_{\mu}$ be a weak lower perfect space together with a
family of endomorphisms
$\mathtt{f}_{il}: V \rightarrow V$ $((i,l) \in I^{\infty})$.
Set
\begin{equation}\label{eq:Vvee}
	V^{\vee}:= \bigoplus_{\mu \in P} V_{\mu}^{\vee},
	\ \ \text{where} \ \ V_{\mu}^{\vee} = \Hom_{\mathbf{k}}(V_{\mu}, \mathbf{k}).
\end{equation}

\vskip 1mm

By abuse of
notation, let $\langle \ , \ \rangle: V \times V^{\vee} \rightarrow \mathbf{k}$ denote the
canonical pairing and define the endomorphisms $\mathtt{e}_{il}:V^\vee \rightarrow V^\vee$
$((i,l) \in I^{\infty})$ by
\begin{equation*}
	\langle u, \mathtt{e}_{il}v \rangle = \langle \mathtt{f}_{il} u, v \rangle
	\ \ \text{for} \ u \in V, \ v \in V^{\vee}.
\end{equation*}

\vskip 2mm

As usual, for a basis $B$ of $V$, we denote by $B^{\vee} \subset V^{\vee}$ the dual basis of $B$.

\vskip 2mm

\begin{proposition} \label{prop:dual}
	
	\vskip 2mm
	
	{\rm
		
		Let $(V, \{\mathtt{f}_{il}\}_{(i,l)\in I^{\infty}})$ be a weak lower perfect space.
		Let $\mathbf{B}$ be a basis of $V$ and $\mathbf{B}^{\vee}$ be the dual basis of $\mathbf{B}$.
		
		\vskip 2mm
		
		\begin{enumerate}
			
			\item[(a)] $\B$ is a lower perfect basis of $V$ if and only if $\mathbf{B}^{\vee}$ is an upper perfect
			basis of $V^{\vee}$.
			
			\vskip 2mm
			
			\item[(b)] Let $\mathbf{B}$ be a lower perfect basis of $V$ and denote the canonical bijection
			$\mathbf{B} \longrightarrow \mathbf{B}^{\vee}$ by $b \longmapsto b^{\vee}$.
			Then we have
			\begin{equation*}
				d_{il}(b) = d_{il}^{\vee}(b^{\vee}), \ \ (\mathbf{e}_{il}(b))^{\vee} = \mathbf{E}_{il}(b^{\vee})
				\ \ \text{for all} \ b \in \mathbf{B}, \ (i,l) \in I^{\infty}.
			\end{equation*}
			
		\end{enumerate}
		
	}
	
\end{proposition}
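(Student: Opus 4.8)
\emph{Proof sketch.} The plan is to show that the canonical pairing $\langle\ ,\ \rangle\colon V\times V^{\vee}\to\mathbf k$ interchanges the image filtration $\{\mathtt f_{il}^{\,n}V\}_{n\ge 0}$ on $V$ with the kernel filtration $\{\Ker\mathtt e_{il}^{\,n}\}_{n\ge 0}$ on $V^{\vee}$, and then to deduce both assertions from this together with Lemma \ref{lem:basic} (in one direction) and Lemma \ref{lem:lwcore} (in the other). As preliminaries, since $\mathtt f_{il}(V_{\mu})\subset V_{\mu-l\alpha_i}$ and the pairing is perfect on each finite-dimensional $V_{\mu}\times V_{\mu}^{\vee}$, the operator $\mathtt e_{il}$ carries $V_{\mu}^{\vee}$ into $V_{\mu+l\alpha_i}^{\vee}$, so $(V^{\vee},\{\mathtt e_{il}\})$ is a weak upper perfect space in the sense of Definition \ref{def:upper}; finiteness of weight spaces also gives a canonical identification $(V^{\vee})^{\vee}\cong V$ under which $\mathtt f_{il}$ is the transpose of $\mathtt e_{il}$. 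From $\langle\mathtt f_{il}^{\,n}u,v\rangle=\langle u,\mathtt e_{il}^{\,n}v\rangle$ and a dimension count in each weight space one gets
\begin{equation*}
\Ker\mathtt e_{il}^{\,n}=(\mathtt f_{il}^{\,n}V)^{\perp}\qquad\text{and}\qquad\mathtt f_{il}^{\,n}V=(\Ker\mathtt e_{il}^{\,n})^{\perp},
\end{equation*}
the left-hand side of the first equality being $V_{il}^{<n}$ by \eqref{eq:Vik}; I will also use that, for a basis and its dual basis, the perpendicular of $\bigoplus_{b\in S}\mathbf k\,b$ is $\bigoplus_{b\notin S}\mathbf k\,b^{\vee}$.

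For the forward implication of (a) together with (b), assume $\mathbf B$ is a lower perfect basis. By Lemma \ref{lem:basic}(b),(c), $\mathtt f_{il}^{\,n}V=\bigoplus_{b\in\mathbf B,\,d_{il}(b)\ge n}\mathbf k\,b$, so taking perpendiculars gives $V_{il}^{<n}=\Ker\mathtt e_{il}^{\,n}=\bigoplus_{b\in\mathbf B,\,d_{il}(b)<n}\mathbf k\,b^{\vee}$, whence $d_{il}^{\vee}(b^{\vee})=d_{il}(b)$, the first identity of (b). Set $\mathbf E_{il}(b^{\vee})=(\mathbf e_{il}b)^{\vee}$ if $\mathbf e_{il}b\neq 0$ and $\mathbf E_{il}(b^{\vee})=0$ otherwise; since $\mathbf e_{il}b\neq 0\iff b\in\mathbf f_{il}\mathbf B\iff d_{il}(b)\ge 1\iff d_{il}^{\vee}(b^{\vee})\ge 1$, conditions (b)(ii),(b)(iii) of Definition \ref{def:upper perfect basis} hold by construction, which gives the second identity of (b). The one substantive step is the estimate (b)(i): writing $d:=d_{il}(b)$, the case $d=0$ is trivial, and for $d\ge 1$ we set $b_{0}:=\mathbf e_{il}b$ (so $\mathbf f_{il}b_{0}=b$ and $d_{il}(b_{0})=d-1$ by Lemma \ref{lem:basic}(d)) and must find $c\in\mathbf k^{\times}$ with $\mathtt e_{il}b^{\vee}-c\,(\mathbf e_{il}b)^{\vee}\in V_{il}^{<d-1}=(\mathtt f_{il}^{\,d-1}V)^{\perp}$. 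Pairing with an arbitrary $w\in\mathtt f_{il}^{\,d-1}V=\bigoplus_{d_{il}(b'')\ge d-1}\mathbf k\,b''$ and using $\langle b'',\mathtt e_{il}b^{\vee}\rangle=\langle\mathtt f_{il}b'',b^{\vee}\rangle$, this reduces to showing that the coefficient of $b$ in $\mathtt f_{il}b''$ is $c\,\delta_{b'',b_{0}}$ for every $b''\in\mathbf B$ with $d_{il}(b'')\ge d-1$; and this follows from Definition \ref{def:lower perfect basis}(a)(ii), because $\mathtt f_{il}b''-c_{b''}\mathbf f_{il}(b'')\in\mathtt f_{il}^{\,d_{il}(b'')+2}V\subseteq\mathtt f_{il}^{\,d+1}V=\bigoplus_{d_{il}(\beta)\ge d+1}\mathbf k\,\beta$ does not involve $b$ (as $d_{il}(b)=d$), so the coefficient of $b$ in $\mathtt f_{il}b''$ equals that in $c_{b''}\mathbf f_{il}(b'')$, namely $c_{b''}$ when $\mathbf f_{il}(b'')=b$, i.e. $b''=b_{0}$ (here using that $\mathbf f_{il}$ is injective on $\{b:\mathbf f_{il}b\neq 0\}$), and $0$ otherwise. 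Condition (a) of Definition \ref{def:upper perfect basis} is clear from $\mathbf B^{\vee}_{\mu}=(\mathbf B_{\mu})^{\vee}$, and (c) holds since $\mathbf E_{il}(b^{\vee})=\mathbf E_{il}((b')^{\vee})$ for all $(i,l)$ forces $\mathbf e_{il}b=\mathbf e_{il}b'$ for all $(i,l)$, hence $b=b'$; this proves (b) and the forward direction of (a).

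For the converse implication of (a) I would run the symmetric argument with $V$ replaced by $V^{\vee}$: given an upper perfect basis $\mathbf B^{\vee}$, Lemma \ref{lem:lwcore}(b) gives $\Ker\mathtt e_{il}^{\,n}=\bigoplus_{d_{il}^{\vee}(\beta)<n}\mathbf k\,\beta$, and taking perpendiculars in $(V^{\vee})^{\vee}\cong V$ yields $\mathtt f_{il}^{\,n}V=\bigoplus_{d_{il}^{\vee}(\beta)\ge n}\mathbf k\,\beta^{\vee}$, so $d_{il}(\beta^{\vee})=d_{il}^{\vee}(\beta)$; dualizing $\mathbf F_{il}$ to a map $\mathbf f_{il}$ on $(\mathbf B^{\vee})^{\vee}=\mathbf B$ and repeating the coefficient-extraction step with Lemma \ref{lem:lwcore}(a) in place of Lemma \ref{lem:basic} shows that $\mathbf B$ satisfies the axioms of Definition \ref{def:lower perfect basis}. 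I expect the estimate (b)(i) — the coefficient-extraction computation, and keeping the image and kernel filtrations together with the depth functions $d_{il}$ and $d_{il}^{\vee}$ consistently matched — to be the only real obstacle; once the two orthogonality relations and Lemmas \ref{lem:basic} and \ref{lem:lwcore} are in hand, the rest is formal linear-algebra duality.
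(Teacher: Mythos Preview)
Your approach is essentially the same as the paper's: exploit the orthogonality $\Ker\mathtt e_{il}^{\,n}=(\mathtt f_{il}^{\,n}V)^{\perp}$ together with Lemma~\ref{lem:basic} and Lemma~\ref{lem:lwcore} to match the two depth functions, and then verify the remaining axioms. Two cosmetic differences: you spell out the direction lower $\Rightarrow$ upper while the paper sketches upper $\Rightarrow$ lower, and for the key estimate you do an explicit coefficient extraction in the basis, whereas the paper argues via the duality of associated graded pieces $(\mathtt f_{il}^{k}V/\mathtt f_{il}^{k+1}V)^{\vee}\cong\Ker\mathtt e_{il}^{k+1}/\Ker\mathtt e_{il}^{k}$. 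Both routes are valid and lead to the same conclusion.

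There is one genuine gap in your verification of axiom~(c) for $\mathbf B^{\vee}$. You write that ``$\mathbf E_{il}(b^{\vee})=\mathbf E_{il}((b')^{\vee})$ for all $(i,l)$ forces $\mathbf e_{il}b=\mathbf e_{il}b'$ for all $(i,l)$, hence $b=b'$''. The last ``hence'' is not justified by the lower perfect axioms: condition~(iii) of Definition~\ref{def:lower perfect basis} is joint injectivity of the family $(\mathbf f_{il})$, not of $(\mathbf e_{il})$. Since each $\mathbf e_{il}$ is automatically injective where nonzero (being a partial inverse of $\mathbf f_{il}$), the only obstruction is the case $\mathbf e_{il}b=\mathbf e_{il}b'=0$ for all $(i,l)$, i.e.\ $b,b'\in\mathbf B_H$; nothing in Definition~\ref{def:lower perfect basis} forces $|\mathbf B_H|\le 1$. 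The paper's sketch (which works in the reverse direction and defers to \cite{KKKS}) does not address the analogous point either, and in the applications to $U_q^-(\g)$ and $V(\lambda)$ one has $|\mathbf B_H|=1$ so the issue is moot; but at the level of generality of the proposition your sentence needs an extra argument or hypothesis.
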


\vskip 2mm

\begin{proof} \ Since the main argument is similar to that of  \cite[Proposition 7.3]{KKKS},
	we will only give a sketch of the proof.
	
	\vskip 3mm
	
	We will prove {\it if} part.
	Assume that $\mathbf{B}^{\vee}$ is an upper perfect basis of $V^{\vee}$.
	Then
	\begin{equation*}
		\ker \mathtt{e}_{il}^k= \bigoplus_{b^{\vee} \in \mathbf{B}^{\vee}
			\atop d_{il}^{\vee}(b^{\vee}) < k} \mathbf{k}\, b.
	\end{equation*}

	Since
	\begin{equation*}
		\mathtt{f}_{il}^k V = (\ker \mathtt{e}_{il}^k)^{\perp}
		= \{u \in V \mid \langle u, \ker \mathtt{e}_{il}^k \rangle = 0 \},
	\end{equation*}
	we obtain
	$$\mathtt{f}_{il}^k V = \bigoplus_{b \in \B \atop d_{il}^{\vee}(b^{\vee}) \ge k} \mathbf{k} \,b.$$
	It follows that $b \in \mathtt{f}_{il}^k V$ if and only if $d_{il}^{\vee}(b^{\vee}) \ge k$.
	Hence we obtain
	
	\begin{equation*}
		d_{il}(b) = d_{il}^{\vee}(b^{\vee}) \ \ \text{and} \ \
		\mathtt{f}_{il}^k V = \bigoplus_{b \in \B \atop d_{il}(b) \ge k} \mathbf{k}\, b.
	\end{equation*}
	
	\vskip 2mm
	
	Define the map
	
	\begin{equation*}
		\mathbf{f}_{il}: \B \longrightarrow \B \cup \{0\} \ \ \text{by} \ \
		b \longmapsto (\mathbf{F}_{il}(b^{\vee}))^{\vee} \ \ \text{for} \
		b \in \B, \ (i,l) \in I^{\infty}.
	\end{equation*}
	
	\vskip 3mm
	
	Let $b \in \B$ and suppose $d_{il}(b) = k$.
	We will show that
	\begin{equation*}
		\mathtt{f}_{il}(b) - c \, \mathbf{f}_{il}(b) \in \mathtt{f}_{il}^{k+2} V
		\ \ \text{for some} \ c \in \mathbf{k}^{\times}.
	\end{equation*}
	
	\vskip 2mm
	
By Lemma \ref{lem:lwcore}, the image of $\mathbf{B}_{k}^{\vee}
	= \{b^{\vee} \in \B^{\vee} \mid d_{il}^{\vee}(b) = k\}$ is a basis of
	$\ker \mathtt{e}_{il}^{k+1} \big/ \ker \mathtt{e}_{il}^{k}$.
	Since $(\mathtt{f}_{il}^k V \big/ \mathtt{f}_{il}^{k+1}V)^{\vee}
	\overset{\sim} \rightarrow \ker \mathtt{e}_{il}^{k+1} \big/ \ker \mathtt{e}_{il}^k$,
	the image of the dual $\B_{k}$ of $\B_{k}^{\vee}$ is a basis of
	$\mathtt{f}_{il}^k V \big/ \mathtt{f}_{il}^{k+1} V$.
	
	\vskip 3mm
	
	By our assumption, the map
	$$\mathtt{e}_{il}: \ker \mathtt{e}_{il}^{k+2} \big/ \ker \mathtt{e}_{il}^{k+1}
	\longrightarrow \ker \mathtt{e}_{il}^{k+1} \big/ \ker \mathtt{e}_{il}^{k}$$
	induces an injective map
	$$\mathbf{E}_{il}: \{b^{\vee} \in \mathbf{B}_{k+1}^{\vee} \mid
	\mathbf{E}_{il}(b^{\vee}) \neq 0 \} \hookrightarrow \mathbf{B}_{k}^{\vee}.$$
	
	\vskip 2mm
	
	By duality, one can see that the map
	$$\mathtt{f}_{il}: \mathtt{f}_{il}^{k} V \big/ \mathtt{f}_{il}^{k+1} V
	\longrightarrow \mathtt{f}_{il}^{k+1} V \big/ \mathtt{f}_{il}^{k+2} V$$
	induces a map $\B_{k} \rightarrow \B_{k+1}$ up to a constant multiple.
	Hence for any $b \in \B_{k}$, we have
	$$\mathtt{f}_{il}(b) - c \, \mathbf{f}_{il}(b) \in
	\mathtt{f}_{il}^{k+2}V \ \ \text{for some} \  c \in \mathbf{k}^{\times}.$$
	It follows that $\B$ is a lower perfect basis.
	
	\vskip 3mm
	
	The converse can be proved by a {\it dual} argument.
\end{proof}

\vskip 2mm

Recall the definition of $e'_{il}$ in \eqref{eq:eP=R}, one can show that there exists a unique non-degenerate symmetric bilinear form $(\ ,\ )_K$ on $U^-_q(\g)$ which is given by
\begin{equation} \label{eq:bilinearU}
	(\mathbf{1}, \mathbf{1})_{K} = 1, \ \
	(\mathtt{b}_{il}S, T)_{K} = (S, e_{il}'T)_{K} \ \ \text{for} \ S, T \in U_{q}^{-}(\g).
\end{equation}

\vskip 2mm

We set
\begin{equation}\label{eq:Uvee}
	U_{q}^{-}(\g)^\vee:= \bigoplus_{\mu \in P} U_{q}^{-}(\g)_{-\mu}^{\vee},
	\quad U_{q}^{-}(\g)_{-\mu}^{\vee} = \Hom_{\mathbf{k}}(U_{q}^{-}(\g)_{-\mu}, \mathbf{k}),
\end{equation}
where $U_{q}^{-}(\g)_{-\mu}$ is defined in \eqref{eq:weight space U}.

\vskip 2mm

By the non-degenerate symmetric bilinear form $(\ ,\ )_K$ \eqref{eq:bilinearV} on $V(\lambda)$  and the restricted dual space $V(\lambda)^\vee$ defined in \eqref{eq:Vvee}, we can identify $V(\lambda)$ with the dual space $V(\lambda)^\vee$.

\vskip 2mm

Similarly, by the non-degenerate symmetric bilinear form $(\ ,\ )_K$ \eqref{eq:bilinearU} on $U_{q}^{-}(\g)$  and the restricted dual space $U_{q}^{-}(\g)^\vee$ defined in \eqref{eq:Uvee}. We can identify $U_{q}^{-}(\g)$ with the dual space $U_{q}^{-}(\g)^\vee$.

\vskip 2mm

To summarize the above discussion, we obtain the following theorem.

\vskip 2mm

\begin{theorem} \label{thm:final} \hfill
	
	\vskip 2mm
	
	{\rm
		
		\begin{enumerate}
			
			\item[(a)] The upper global basis $\B(\infty)^{\vee}$ is an upper perfect basis of $U_{q}^{-}(\g)$.
			
			\vskip 2mm
			
			\item[(b)] For $\lambda \in P^{+}$, the upper global basis $\B(\lambda)^{\vee}$ is an upper perfect
			basis of $V(\lambda)$.
			
			\vskip 2mm
			
			\item[(c)] Every upper perfect graph of $U_{q}^{-}(\g)$ is isomorphic to $B(\infty)$.
			
			\vskip 2mm
			
			\item[(d)] For $\lambda \in P^{+}$, every upper perfect graph of $V(\lambda)$ is
			isomorphic to $B(\lambda)$.
			
		\end{enumerate}
	}

\end{theorem}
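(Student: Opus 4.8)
The plan is to deduce all four parts from the lower/upper perfect duality of Proposition~\ref{prop:dual}, the lower perfect basis theory already developed, and the identifications of $U_q^-(\g)$ with $U_q^-(\g)^{\vee}$ and of $V(\lambda)$ with $V(\lambda)^{\vee}$ furnished by the Kashiwara forms \eqref{eq:bilinearU} and \eqref{eq:bilinearV}. As a preliminary step I would record that $U_q^-(\g)$ and $V(\lambda)$, equipped with the endomorphisms induced from the lower operators $\widetilde f_{il}$ via $(\ ,\ )_K$ (concretely the $e'_{il}$ of \eqref{eq:eP=R} on $U_q^-(\g)$ and $E_{il}=-K_i^{l}\mathtt{a}_{il}$ on $V(\lambda)$), are weak upper perfect spaces in the sense of Definition~\ref{def:upper}: their weight spaces are finite dimensional, their weights lie in a finite union of lowered cones, and the weight-raising condition follows directly from the defining relations \eqref{eq:primitive}.

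For (a) and (b): by Proposition~\ref{prop:exist lps} the lower global bases $\B(\infty)$ and $\B(\lambda)$ are lower perfect bases of $U_q^-(\g)$ and $V(\lambda)$ with $\mathtt{f}_{il}=\widetilde f_{il}$, so the forward implication of Proposition~\ref{prop:dual}(a) makes the dual bases $\B(\infty)^{\vee}\subset U_q^-(\g)^{\vee}$ and $\B(\lambda)^{\vee}\subset V(\lambda)^{\vee}$ upper perfect bases, with the transposes of $\widetilde f_{il}$ as upper operators. Transporting along the isomorphisms $U_q^-(\g)^{\vee}\cong U_q^-(\g)$ and $V(\lambda)^{\vee}\cong V(\lambda)$ determined by $(\ ,\ )_K$ turns these transposes into the endomorphisms of the previous paragraph and turns $\B(\infty)^{\vee}$, $\B(\lambda)^{\vee}$ into the upper global bases sitting inside $U_q^-(\g)$ and $V(\lambda)$. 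This is exactly (a) and (b).

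For (c) and (d): let $\mathbb{B}$ be an arbitrary upper perfect basis of $U_q^-(\g)$ (resp. $V(\lambda)$). Since the weight spaces are finite dimensional, $\mathbb{B}$ may be viewed inside the restricted dual via the identification above and its dual basis $\B$ taken in $U_q^-(\g)$ (resp. $V(\lambda)$); the backward implication of Proposition~\ref{prop:dual}(a) shows $\B$ is a lower perfect basis, so by Corollary~\ref{cor:lower crystal} its lower perfect graph is isomorphic to $B(\infty)$ (resp. $B(\lambda)$). By Proposition~\ref{prop:dual}(b) the bijection $b\mapsto b^{\vee}$ satisfies $d_{il}(b)=d_{il}^{\vee}(b^{\vee})$ and $(\mathbf{e}_{il}b)^{\vee}=\mathbf{E}_{il}(b^{\vee})$, hence also $(\mathbf{f}_{il}b)^{\vee}=\mathbf{F}_{il}(b^{\vee})$, and it preserves $\text{wt}$, $\varepsilon_i$, $\varphi_i$; thus it is an isomorphism of abstract crystals from the lower perfect graph of $\B$ onto the upper perfect graph of $\mathbb{B}$. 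Composing the two isomorphisms gives (c) and (d).

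The step I expect to require the most care is the one sitting behind the identifications used in (a)--(b): one must verify that the Kashiwara forms genuinely intertwine the transpose of $\widetilde f_{il}$ with $e'_{il}$ on $U_q^-(\g)$ and with $E_{il}$ on $V(\lambda)$. Since $\widetilde f_{il}$ coincides with left multiplication by $\mathtt{b}_{il}$ only in the imaginary non-isotropic case — differing by the scalars occurring in the $i$-string decompositions \eqref{eq:real string U}, \eqref{eq:imaginary string U} and \eqref{eq:real string}, \eqref{eq:imaginary string} for the isotropic and real cases — this adjointness has to be checked at the level of those string decompositions, using the commutation relations for $e'_{il},e''_{il}$ and Lemma~\ref{euEuqL}, rather than by a formal one-line computation. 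Everything after that identification is bookkeeping carried out entirely within the lower perfect basis framework of the earlier sections.
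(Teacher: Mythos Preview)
Your argument is correct and follows essentially the same route as the paper: parts (a)--(b) come from Proposition~\ref{prop:exist lps} together with Proposition~\ref{prop:dual}, and parts (c)--(d) from Corollary~\ref{cor:lower crystal} combined with the crystal compatibility in Proposition~\ref{prop:dual}(b). One remark on the step you flag as delicate: the theorem, as stated, does not assert that the upper operators on $U_q^{-}(\g)$ or $V(\lambda)$ are literally $e'_{il}$ or $E_{il}$; the weak upper perfect structure is simply the transport of the transposes of $\widetilde f_{il}$ along the Kashiwara-form identification, so Proposition~\ref{prop:dual} applies directly without any further adjointness computation. Your concern about matching these transposes with $e'_{il}$ and $E_{il}$ at the level of $i$-string decompositions is a worthwhile side observation, but it is not needed for the proof of Theorem~\ref{thm:final} itself.
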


\vskip 2mm

\begin{proof} \ The statements (a), (b) are immediate consequences of Proposition \ref{prop:exist lps} and Proposition \ref{prop:dual}.
	The statements (c), (d) follow from Proposition \ref{prop:lower crystal},
	Corollary \ref{cor:lower crystal} and Proposition \ref{prop:dual}.
\end{proof}

\vskip 10mm

\end{document}